\numberwithin{equation}{section}
\theoremstyle{plain}
\newtheorem{thm}{Theorem}[section] 
\newtheorem{prop}[thm]{Proposition}
\newtheorem{lem}[thm]{Lemma}
\newtheorem{theorem*}{Theorem}[]
\theoremstyle{definition}
\newtheorem{example}[thm]{Example}
\theoremstyle{remark}
\newtheorem{rem}[thm]{Remark}
\newcommand{\C}{\mathbb{C}}
\newcommand{\R}{\mathbb{R}}
\newcommand{\K}{\mathbb{K}}
\begin{document}
\newcommand{\COMP}{\raisebox{0.1ex}{\scriptsize $\circ$}}
\title[Finiteness on Blow-semialgebraic triviality]
{Finiteness theorem on Blow-semialgebraic triviality\\
for a family of 3-dimensional algebraic sets}
\author{Satoshi KOIKE }
\address {Department of Mathematics, Hyogo University
of Teacher Education, 942-1 Shimokume, Kato,
Hyogo 673-1494, Japan}

\email {koike@hyogo-u.ac.jp}

\subjclass{Primary 14P10, 32S15 Secondary 32S45, 57R45}


\thanks{This research was partially supported by the Grant-in-Aid 
for Scientific Research (No. 18540084) of Ministry of Education, 
Science and Culture of Japan, JSPS Scientist Exchanges FY2006,
and JSPS Bilateral Joint Project FY2003 - FY2005. }

\newcommand{\abstracttext}{}

\maketitle


\begin{abstract}
In this paper we introduce the notion of ``Blow-semialgebraic
triviality consistent with a compatible filtration"
for an algebraic family of algebraic sets,
as an equisingularity for real algebraic singularities.
Given an algebraic family of 3-dimensional algebraic sets
defined over a nonsingular algebraic variety,
we show that there is a finite subdivision of the parameter
algebraic set into connected Nash manifolds over which
the family admits a Blow-semialgebraic trivialisation
consistent with a compatible filtration.
We show a similar result on finiteness also for a Nash family
of 3-dimensional Nash sets through the Artin-Mazur theorem.
As a corollary of the arguments in their proofs, 
we have a finiteness theorem on semialgebraic types of 
polynomial mappings from $\R^2$ to $\R^p$.

\end{abstract}

\vspace{4mm}


Consider a family of zero-sets of Nash mappings 
$f_t : N \to \R^k$ defined over a compact
Nash manifold $N$ (or a family of zero-sets of
Nash map-germs $f_t : (\R^n,0) \to (\R^k,0)$)
with a semialgebraic parameter space $J$.
Define  $F : N \times J \to \R^k$ by $F(x;t) = f_t(x)$.
Assume that $F$ is a Nash mapping.
For a subset $Q \subset J$, set $F_Q = F|_{N \times Q}$.
Then it is known that in the regular case a finiteness theorem
holds on Nash triviality for a family of Nash sets
$\{ (N,f_t^{-1}(0)) \}_{t \in J}$
(M. Coste - M. Shiota \cite{costeshiota}).
More precisely, if each $f_t^{-1}(0)$ does not 
contain a singular point of $f_t$,
then there is a finite partition of $J$ into 
Nash manifolds $Q_i$ such that
$\{ (N,f_t^{-1}(0)) \}_{t \in Q_i}$
is Nash trivial over each $Q_i$.
On the other hand, in the case of isolated singularities
a finiteness theorem holds on Blow-Nash triviality
for $\{ (N,f_t^{-1}(0)) \}_{t \in J}$
(T. Fukui - S. Koike - M. Shiota \cite{fukuikoikeshiota},
S. Koike \cite{koike1}).
Blow-Nash triviality is a notion introduced by the author
\cite{koike0, fukuikoikeshiota, koike1},
motivated by the work of Tzee-Char Kuo on blow-analyticity.
He establishes a locally finite classification theorem 
on blow-analytic equivalence for a family of analytic 
function germs with isolated singularities (\cite{kuo}).
In our Nash case, we have some result also for
non-isolated singularities (\cite{koike1}).
Namely, a finiteness theorem holds on Blow-semialgebraic
triviality for a family of 2-dimensional Nash sets
$\{ (N,f_t^{-1}(0)) \}_{t \in J}$
if the dimension of $N$ is 3 (or $n = 3$).
The results mentioned above for Nash sets are listed 
in the table of \cite{koike2}.

The finiteness results above are proved for a family of zero-sets
of Nash mappings defined over a compact Nash manifold
possibly with boundary.
Therefore the local case is covered with the ``compact" and
``with boundary" case.
In this paper we treat all the finiteness results
in the general case including also the non-compact case.
Namely, $N$ is a Nash manifold, not necessarily compact.
Let us remark that the local case is covered with the
non-compact case.

The first result in this paper is a Nash Isotopy Lemma
for finiteness property without the assumption of properness,
and we improve the finiteness theorems on the existence
of Nash trivial simultaneous resolution and on Blow-Nash triviality
to those in the general case using the new Isotopy Lemma.
We second give a programme to show finiteness
on Blow-semialgebraic triviality in the general case.
Then we show a finiteness theorem on Blow-semialgebraic 
triviality for a family of 2-dimensional Nash sets
in the case where the dimension $N$ is bigger than 3.
As a result, our list in \cite{koike2} is much more enriched.
We give the improved list (table (*)) in \S 4.
The main purpose in this paper is to show the theorem below, that is,
a finiteness theorem on Blow-semialgebraic triviality consistent with
a compatible filtration for a family of 3-dimensional algebraic sets.
For the definition of the new Blow-semialgebraic triviality,
see subsection 1.4.

Let $N$ be an affine nonsingular algebraic variety in $\R^m$,
and let $J$ be an algebraic set in $\R^a$.
Let $f_t : N \to \R^k$ ($t \in J$) be a polynomial mapping
such that $\dim f_t^{-1}(0) \le 3$ for $t \in J$.
Assume that $F$ is a polynomial mapping, 
i.e. $F$ is the restriction of a polynomial mapping
$\tilde{F} : \R^m \times \R^a \to \R^k$ to $N \times J$.
Then we have

\vspace{3mm}

\noindent {\bf Main Theorem.} {\em There exists a finite partition
$J = Q_1 \cup \cdots \cup Q_u$
which satisfies the following conditions}:

(1) {\em Each $Q_i$ is a Nash manifold which is Nash diffeomorphic
to an open simplex in some Euclidean space,
and $\dim f_t^{-1}(0)$ and $\dim f_t^{-1}(0) \cap S(f_t)$
are constant over $Q_i$.}

(2) {\em For each $i$ where $\dim f_t^{-1}(0) = 3$ and
$\dim f_t^{-1}(0) \cap S(f_t) \ge 1$ over $Q_i$, 
$F_{Q_i}^{-1}(0)$ admits a Blow-semialgebraic trivialisation 
consistent with a compatible filtration along $Q_i$.

In the case where $\dim f_t^{-1}(0) \le 2$ over $Q_i$ or
$\dim f_t^{-1}(0) \cap S(f_t) \le 0$ over $Q_i$,
$(N \times Q_i,F_{Q_i}^{-1}(0))$ admits
a trivialisation listed in table (*).}

\vspace{3mm}

Throughout this paper, $S(f)$ denotes the singular points set
of $f$ for a Nash mapping $f : N \to P$.

When we show a Blow-semialgebraic triviality for a family 
of algebraic sets or Nash sets,
the desingularisation theorem for an algebraic variety or a Nash variety
(H. Hironaka \cite{hironaka1, hironaka3}, E. Bierstone and
P.D. Milman \cite{bierstonemilman1, bierstonemilman2,
bierstonemilman3}) and the semialgebraic versions of Thom's Isotopy Lemmas 
(M. Shiota \cite{shiota2}) take very important roles in their proof.
But the 2nd Isotopy Lemma is not applicable directly to
a family of resolution maps, since a blow up is a typical example
of a non-Thom map.
The crucial point is how we establish a semialgebraic trivialisation
for a family of the restrictions of resolution maps to the exceptional sets.
In the case of Nash surfaces, the image of the restriction map is 
generically at most one-dimensional semialgebraic set.
Therefore we could show a finiteness theorem in \cite{koike1}, 
using the methods developed by T. Fukuda in \cite{fukuda1,fukuda2}. 
In the case of 3-dimensional algebraic sets, after being desingularised
the intersection of the strict transform of the algebraic set
and the exceptional set is generically a union of normal crossing
nonsingular algebraic surfaces.
In this paper, to overcome the crucial point, we apply
C. Sabbah's arguments discussed in the work \cite{sabbah2} on
``sans \'eclatement'' stratified analytic morphisms
based on the flattening theorem of Hironaka \cite{hironaka2,
hironaka4} and in the work \cite{sabbah1} on a local finiteness theorem 
for complex analytic mappings defined over a complex surface.

We first prepare several notions and some fundamental results
on Blow-semialgebraic triviality in \S 1, and describe
the aforementioned Nash Isotopy Lemma for finiteness property 
(Theorem I) in \S 2.
Then we describe the programme in \S 3 to show finiteness on 
Blow-semialgebraic triviality, which is applicable also 
in the non-compact case.
A part of the ideas of the programme is used in \cite{koike1}
to show some finiteness theorems in the compact case.
Here we divide our programme into eight processes.
In order to establish finiteness theorems on Blow-semialgebraic 
triviality with our method, it suffices to show only Process IV
related to the above crucial point, as the other processes always work.
Therefore we believe that it is natural to describe
our method as a programme in this paper.
As corollaries of the programme,
we have two finiteness theorems (Theorems III, IV) in \S 4.
In \S 5 we show a finiteness theorem for a family of the
main parts of 3-dimensional algebraic sets following our programme.
Here the main part of an algebraic set $V$ means the set of points
$x \in V$ at which the local dimension of $V$ at $x$ equals 
to the dimension of $V$.
Using the result shown in \S 5 and results listed in table (*),
we give a proof of our main theorem in \S 6.
In \S 7 we give a finiteness theorem on Blow-semialgebraic triviality
consistent with a Nash compatible filtration for a family of 3-dimensional 
Nash sets, corresponding to the algebraic result above.
In \S 8 we describe a finiteness theorem on semialgebraic types
of polynomial mappings from $\R^2$ to $\R^p$.

The first draft of this paper was written up
while the author was visiting the University of Sydney.
He would like to thank the institution for its support and hospitality.
He would also like to thank Karim Bekka, Takuo Fukuda, Shuzo Izumi,
Tzee-Char Kuo and Masahiro Shiota for useful communications,
and to Yeh Yu Chen for her encouragement.

\bigskip
\section{Preliminaries.}
\label{PRE}
\medskip

\subsection{Semialgebraic properties.}
A {\em semialgebraic set} of $\R^n$ is a finite union of the form
$$
\{ x \in \R^n \ | \ f_1(x) = \cdots = f_k(x) = 0, \
g_1(x) > 0, \cdots g_s(x) > 0 \}
$$
where $f_1, \cdots , f_k, g_1, \cdots , g_s$ are
polynomial functions on $\R^n$.
A $C^{\omega}$ submanifold of $\R^m$
is called a {\em Nash manifold}, 
if it is semialgebraic in $\R^m$.
In this paper, a submanifold always means a regular submanifold.
Let $M \subset \R^m$ and $N \subset \R^n$ be Nash manifolds.
A $C^{\omega}$ mapping $f : M \to N$ is called a {\em Nash mapping},
if the graph of $f$ is semialgebraic in $\R^m \times \R^n$.
We call the zero-set of a Nash mapping a {\em Nash set}.

Let $Q \subset \R^m$.
We say that $Q$ is a {\em Nash open simplex},
if it is a Nash manifold which is Nash diffeomorphic 
to an open simplex in some Euclidean space.

For $r = 1, 2, \cdots , \infty$, we can define the notions of a 
$C^r$ {\em Nash manifold}, a $C^r$ {\em Nash mapping}
and a $C^r$ {Nash open simplex} similarly.
By B. Malgrange \cite{malgrange}, a $C^{\infty}$ Nash manifold
is a Nash manifold and a $C^{\infty}$ Nash mapping 
between Nash manifolds is a Nash mapping.

\begin{thm}\label{tarski-seidenberg}
(Tarski-Seidenberg Theorem \cite{seidenberg}).
Let $A$ be a semialgebraic set in $\R^k$,
and let $f : \R^k \to \R^m$ 
be a semialgebraic mapping,
namely, the graph of $f$ is semialgebraic in 
$\R^k \times \R^m$.
Then $f(A)$ is semialgebraic in $\R^m$.
\end{thm}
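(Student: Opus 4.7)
The plan is to reduce the statement to an assertion about projections and then prove the projection case by quantifier elimination. Observe first that
$$f(A) = \pi_2\bigl(\operatorname{graph}(f) \cap (A \times \R^m)\bigr),$$
where $\pi_2 : \R^k \times \R^m \to \R^m$ is the projection onto the second factor. Finite intersections of semialgebraic sets are semialgebraic, as one sees directly from the defining formulas; hence $\operatorname{graph}(f) \cap (A \times \R^m)$ is a semialgebraic subset of $\R^k \times \R^m$. Thus it suffices to prove that the image of a semialgebraic set under a coordinate projection is again semialgebraic.

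By iterating one coordinate at a time, it is enough to treat the projection $\pi : \R^{N+1} \to \R^N$ that forgets the last coordinate. Since projection commutes with finite unions, we may further assume the semialgebraic set under consideration is a single basic piece
$$S = \{(x,t) \in \R^N \times \R \mid p_1(x,t) = \cdots = p_r(x,t) = 0,\ q_1(x,t) > 0,\ \ldots,\ q_s(x,t) > 0\},$$
where $p_i, q_j \in \R[X_1,\ldots,X_N,T]$. The goal becomes: describe, by a Boolean combination of polynomial sign conditions on $x$, the set of $x \in \R^N$ for which some $t \in \R$ realises the above system.

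For each fixed $x$, the existence of such a $t$ depends only on the real root structure of the one-variable polynomials $p_i(x,\cdot)$ and $q_j(x,\cdot)$. Classical real-algebra tools — Sturm sequences, subresultants, or Thom's sign-encoding of real roots — let one express the number of real roots, their relative ordering, and the signs of the other $q_j$ at those roots as a Boolean combination of sign conditions on polynomials in the coefficients alone, i.e.\ polynomials in $x$. Taking the disjunction over the admissible sign patterns yields a semialgebraic description of $\pi(S)$, which completes the reduction.

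The main obstacle will be the bookkeeping in this last step, because the leading coefficient in $T$ of some $p_i$ or $q_j$ may vanish on a proper semialgebraic subset of $\R^N$, making the effective degree of the polynomial drop and the root-counting argument branch. I would handle this by induction on the total degree in $T$: stratify $\R^N$ into the locus where the leading coefficient is nonzero, and the locus where it vanishes, treat the first by the Sturm/subresultant analysis at the stated degree, and apply the inductive hypothesis to the reduced polynomials on the second. The finite union of the resulting semialgebraic descriptions gives $\pi(S)$, finishing the proof.
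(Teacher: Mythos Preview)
Your outline is the standard quantifier-elimination proof and is correct: the reduction to a single-coordinate projection via the graph, followed by elimination of one real variable using subresultant or Sturm-sequence techniques, with induction on the $T$-degree to handle vanishing leading coefficients, is exactly the classical argument. The paper itself does not prove this theorem; it is stated as a foundational result with a reference to Seidenberg, so there is no in-paper proof to compare against. Your approach is the one a reader following up the citation would find.
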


\begin{thm}\label{lojasiewicz}
(Lojasiewicz's Semialgebraic Triangulation Theorem 
\cite{lojasiewicz1, lojasiewicz2}).
Given a finite system of bounded semialgebraic sets $X_{\alpha}$
in $\R^n$, there exist a simplicial decomposition
$\R^n = \cup_a C_a$ and a semialgebraic automorphism
$\tau$ of $\R^n$ such that

(1) each $X_{\alpha}$ is a finite union of some of the $\tau (C_a)$, 
 
(2) $\tau (C_a)$ is a Nash manifold in $\R^n$
and $\tau$ induces a Nash diffeomorphism $C_a \to \tau (C_a)$,
for every $a$.
\end{thm}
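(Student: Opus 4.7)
The plan is to prove Lojasiewicz's theorem by induction on the ambient dimension $n$, combining a cylindrical semialgebraic decomposition adapted to the family $\{X_\alpha\}$ with a straightening map that realises the decomposition as the image of a simplicial complex under a global semialgebraic automorphism $\tau$ of $\R^n$.

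First I would set up the induction. The case $n=0$ is trivial. For the inductive step, enlarge the family $\{X_\alpha\}$ to include the Zariski closures of the polynomials defining the $X_\alpha$, and consider the projection $\pi : \R^n \to \R^{n-1}$ onto the first $n-1$ coordinates. By Theorem~\ref{tarski-seidenberg} the projections $\pi(X_\alpha)$, and all ``discriminant'' loci above which the number or algebraic type of points in $X_\alpha \cap \pi^{-1}(y)$ changes, are semialgebraic in $\R^{n-1}$. By standard facts on semialgebraic functions (continuity of algebraic roots away from the discriminant), away from this discriminant locus the fibre data is given by finitely many continuous semialgebraic functions $\xi_1(y) < \cdots < \xi_r(y)$ whose graphs cut $\pi^{-1}(y)$ into cells compatible with all $X_\alpha$. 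After a generic linear change of coordinates one may assume each $X_\alpha$ is bounded in the last coordinate over its projection, so that only finitely many graphs appear.

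Next, I would apply the inductive hypothesis to the bounded semialgebraic family in $\R^{n-1}$ obtained from $\pi(X_\alpha)$, from the discriminant loci, and from the images of loci where the $\xi_i$ fail to be real analytic. This yields a simplicial decomposition $\R^{n-1} = \cup_b D_b$ and a semialgebraic automorphism $\sigma$ with $\sigma(D_b)$ Nash and $\sigma|_{D_b}$ a Nash diffeomorphism, subordinate to all these data. Over each open simplex $D_b$, the functions $\xi_i \circ \sigma|_{D_b}$ become Nash, and the cells of the cylindrical decomposition above $D_b$ become graphs or open bands determined by Nash functions. Inside each such cylinder, I triangulate the product $D_b \times \R$ (restricted to the bounded range) by coning on the barycentres of the graphs over the barycentres of $D_b$, together with the graphs themselves. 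This produces a simplicial decomposition $\R^n = \cup_a C_a$ refining the product of the $D_b$'s with a standard interval triangulation, where vertices are chosen so that the image under the natural ``vertical'' straightening map is compatible with all $X_\alpha$.

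Finally, I would define $\tau$ on each prism over $D_b$ as: apply $\sigma$ in the first $n-1$ coordinates, and then in the fibre direction use a piecewise-affine interpolation in the barycentric coordinates of $D_b$ to send the linear cone of the barycentres onto the Nash graphs $\xi_i \circ \sigma$. These formulas match on shared faces by construction, so $\tau$ glues to a semialgebraic homeomorphism of $\R^n$, and on the open part of each $C_a$ the formula is the product of a Nash diffeomorphism and a nowhere-vanishing Nash combination, hence a Nash diffeomorphism $C_a \to \tau(C_a)$. Property (1) is then a tautological consequence of the compatibility of the cylindrical decomposition with each $X_\alpha$. The main obstacle I expect to fight with is the \emph{simultaneous} requirement that $\tau$ be a global semialgebraic homeomorphism of $\R^n$ and that each restriction $\tau|_{C_a}$ be Nash: this forces one to choose the barycentric interpolation very carefully, to arrange the cylindrical cells so that adjacent cells meet in ``Nash-flat'' faces (which is where the extra refinements added to the inductive family in $\R^{n-1}$ are crucial), and to use the Malgrange result cited after the definition of Nash mappings to upgrade $C^\infty$-Nash smoothness of the interpolation to genuine $C^\omega$-Nash regularity.
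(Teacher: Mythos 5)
The paper does not prove Theorem~\ref{lojasiewicz} at all: it is stated as a classical result and cited directly to Lojasiewicz's original papers, so there is no in-paper argument to compare yours against. Your sketch follows the standard route to this theorem in the literature (cylindrical semialgebraic decomposition adapted to the family, induction on the ambient dimension, straightening the resulting prism decomposition onto a simplicial complex), and the broad strokes are right, including the correct recognition that one must refine the inductive input in $\R^{n-1}$ by the loci where the section functions $\xi_i$ fail to be analytic so that condition~(2) — that each restriction $\tau|_{C_a}$ be a Nash diffeomorphism onto a Nash submanifold — can be achieved on open simplices.

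Two points where your sketch glosses over genuine technical work. First, the assertion ``these formulas match on shared faces by construction'' is exactly where the classical proof spends most of its effort: the triangulations of neighbouring prisms $D_b \times \R$ and $D_{b'} \times \R$ must be chosen compatibly, and your barycentric coning recipe does not automatically do this; one has to order the triangulation construction carefully over the skeleton of the base decomposition. Second, the requirement that $\tau$ be a global semialgebraic \emph{homeomorphism} of all of $\R^n$, while simultaneously a Nash diffeomorphism on each open cell, is not just an upgrade by the Malgrange result at the end — the Malgrange theorem converts $C^\infty$-Nash data to $C^\omega$-Nash data but does nothing to repair a map that is merely piecewise-Nash across cell boundaries, and continuity of the glued $\tau$ has to be established directly from the uniform limits of the $\xi_i$ at the boundaries of the $D_b$. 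These are the details that Lojasiewicz's papers (and later expositions) work out carefully, and you would need to supply them to have a complete proof, but they are known to work and your outline aims them in the right direction.
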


Concerning the triangulation theorem above,
we make an important remark.
We use the fact also in the proof of the Nash Isotopy Lemma
in the next section.

\begin{rem}\label{remark11}
In Theorem \ref{lojasiewicz} the boundedness is not essential.
Since there is a Nash embedding of $\R^n$ into $\R^{n+1}$
via $\R^n \subset S^n$, every semialgebraic subset in $\R^n$
can be considered as a bounded semialgebraic subset in $\R^{n+1}$.
\end{rem}

\begin{thm}\label{hardt}
(Hardt's Semialgebraic Triviality Theorem \cite{hardt}).
Let $B$ be a semialgebraic set, and let
$\Pi : \R^m \times B \to B$ be the projection.
For any semialgebraic subset $X$ of $\R^m \times B$,
there is a finite partition of $B$ into semialgebraic sets $N_i$,
and for any $i$, there are a semialgebraic set $F_i \subset \R^m$
and a semialgebraic homeomorphism
$$
h_i : F_i \times N_i \to X \cap \Pi^{-1}(N_i)
$$
compatible with the projection onto $N_i$.
\end{thm}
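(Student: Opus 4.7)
My plan is to prove Hardt's theorem via a cylindrical semialgebraic decomposition adapted to the projection $\Pi$. By Tarski-Seidenberg (Theorem \ref{tarski-seidenberg}), $\Pi(X)$ is semialgebraic, so I can restrict to $B = \Pi(X)$; by Remark \ref{remark11} I can further assume all sets in sight are bounded. I then want to decompose $\R^m \times \R^a$ into semialgebraic cells compatibly with $X$ and $B$, in such a way that over each induced cell $N_i$ of $B$, the cells of $X \cap \Pi^{-1}(N_i)$ are organised cylindrically over $N_i$.

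Concretely, I would order the ambient coordinates so that the $B$-variables $t_1, \ldots, t_a$ come last and perform a cylindrical algebraic decomposition of $\R^m \times \R^a$ adapted to the defining polynomials of $X$. This produces a finite partition of $B$ into semialgebraic cells $N_i$ and, over each $N_i$, a finite stack of cells of $X \cap \Pi^{-1}(N_i)$ that are either graphs of continuous semialgebraic functions $\xi_j(x_1, \ldots, x_{j-1}, t)$ or bands between two consecutive such graphs in the $x_j$-direction. Fix $t_0 \in N_i$ and set $F_i = X \cap \Pi^{-1}(t_0)$. Define the trivialisation $h_i : F_i \times N_i \to X \cap \Pi^{-1}(N_i)$ by straightening the cylindrical stack inductively on $j$: for a graph-cell, replace the $j$-th coordinate of a fibre point by $\xi_j(\cdot,t)$ rather than $\xi_j(\cdot,t_0)$; for a band, interpolate linearly in the $x_j$-direction between the two bounding graphs evaluated at $t$. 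The resulting map is semialgebraic by Tarski-Seidenberg, bijective by construction, and cellwise a homeomorphism.

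The main obstacle is establishing global continuity of $h_i$ across the boundaries between adjacent cells of $F_i$, since the cellwise formulas must agree on their common faces. This is precisely where the cylindrical structure is essential: two adjacent cells in the stack over $N_i$ share a graph-function on their common boundary, and the linear interpolation used inside the band matches the graph-cell formula on that boundary. A secondary technical point is arranging, by further refining $\{N_i\}$, that each graph function $\xi_j$ is continuous (not merely semialgebraic) on all of $N_i$; this is achieved by cutting $B$ along the discriminant loci of the defining polynomials, after which the ordering of the roots in $x_j$ is stable over each cell. Pulling everything back through the compactification of Remark \ref{remark11} yields the stated decomposition of the original $B$, with $F_i$ taken as the fibre of $X$ over any chosen basepoint of $N_i$.
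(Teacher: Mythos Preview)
The paper does not prove Theorem~\ref{hardt}; it is quoted without proof from Hardt's original article \cite{hardt} as a standard tool in the preliminaries. There is therefore no ``paper's own proof'' to compare your attempt against.

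That said, your outline is a legitimate route to Hardt's theorem, and it is genuinely different from Hardt's original argument. Hardt works with semialgebraic triangulations and a wing-lemma style induction on dimension; you instead use cylindrical algebraic decomposition and build the trivialisation by piecewise-linear straightening in each fibre coordinate. Your approach is the one found in modern textbook treatments (e.g.\ Bochnak--Coste--Roy), and it has the advantage of being more elementary and algorithmic. One technical slip: for the cells of the CAD to project to cells of $B$, the $B$-variables $t_1,\ldots,t_a$ must come \emph{first} in the variable ordering, not last, so that the successive projections eliminate the $x_j$'s and leave a decomposition of $\R^a$. Your own formula $\xi_j(x_1,\ldots,x_{j-1},t)$ already reflects the correct ordering, so this is only a misstatement. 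The continuity argument you sketch---that the affine interpolation in a band degenerates to the bounding graph value on the common face, and that this propagates through the induction on $j$---is correct and is exactly the point that needs checking.
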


\subsection{Stratification.}
We recall the notions of the Whitney stratification
and the Thom mapping briefly in the $C^2$ Nash category. 
See \cite{gibsonetal, mather, thom, trotman1, trotman2, 
whitney1, whitney2} for the definitions of
the Whitney ($b$)-regularity and the Thom ($a_f$)-regularity.

Let $A \subset \R^m$ be a semialgebraic set. 
We say that a $C^2$-Nash stratification ${\mathcal S}(A)$ of $A$
is a {\em Whitney stratification},
if for any strata $X,\ Y$ with $\overline{X} \supset Y,\ X$
is Whitney ($b$)-regular over $Y$.

Let $A \subset \R^m$ and $B \subset \R^r$
be semialgebraic sets, and let $f : A \to B$ be a $C^2$-Nash mapping.
Assume that $A$ and $B$ admit $C^2$-Nash stratifications
${\mathcal S}(A)$ and ${\mathcal S}(B)$, respectively.
We call $f$ a {\em stratified mapping},
if for any stratum $X \in {\mathcal S}(A)$,
there is a stratum $U \in {\mathcal S}(B)$ such that
$f|_X : X \to U$ is a (onto) submersion.

Let $f : (A,{\mathcal S}(A)) \to (B,{\mathcal S}(B))$ be a $C^2$-Nash
stratified mapping.
We call $f$ a {\em Thom mapping},
if for any strata $X,\ Y$ of ${\mathcal S}(A)$
with $\overline{X} \supset Y,\ X$ is {\em Thom} ($a_f$)-{\em regular}
over $Y$.

\subsection{Nash trivial simultaneous resolution.}

Let $M, U$ be Nash manifolds, and let $V$ be a Nash set of $U$.
Let $\Pi : M \to U$ be a proper Nash modification.
We say that $\Pi$ is a {\em Nash resolution} of $V$ in $U$,
if there is a finite sequence of blowings-up
$\sigma_{j+1} : M_{j+1} \to M_j$ with smooth centres $C_j$
(where $\sigma_j, M_j$ and $C_j$ are of Nash class) such that:

(1) $\Pi$ is the composite of $\sigma_j$'s.

(2) The critical set of $\Pi$ is a union of Nash divisors
$D_1, \cdots , D_d$.

(3) $V^{\prime} $ (: the strict transform of $V$ in $M$ by $\Pi$)
is a Nash submanifold of $M$.

(4) $V^{\prime}, D_1, \cdots , D_d$ simultaneously have only normal
crossings.

(5) There is a thin Nash set $T$ in $V$ so that
$\Pi|_{\Pi^{-1}(V-T)} : \Pi^{-1}(V-T) \to V - T$
is a Nash isomorphism.

Concerning a Nash resolution, we have 
the following existence theorem.

\begin{thm}\label{desingularisation}
(H. Hironaka \cite{hironaka1, hironaka3}, E. Bierstone and 
P.D. Milman \cite{bierstonemilman1, bierstonemilman2, bierstonemilman3}).
For a Nash variety $V$ of a Nash manifold $U$,
there exists a Nash resolution of $V$ in $U$,
$\Pi : M \to U$.
\end{thm}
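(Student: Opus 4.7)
The plan is to reduce to the algebraic/analytic desingularisation theorems of Hironaka and Bierstone--Milman and verify that every step of the construction remains within the Nash category. First embed $U$ as a closed Nash submanifold of some $\R^N$. Since $V$ is a Nash set of $U$, it is a closed semialgebraic analytic subvariety defined locally by finitely many Nash equations. To such data one applies the canonical algorithmic embedded desingularisation procedure, which yields a finite sequence of blowings-up $\sigma_{j+1} : M_{j+1} \to M_j$ with smooth centres $C_j$ whose composite $\Pi : M \to U$ resolves $V$, in the sense that the strict transform $V^{\prime}$ is smooth and, together with the total exceptional divisor, is in simultaneous normal crossings.

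The key point is that each $M_j$, each $C_j$ and each map $\sigma_{j+1}$ stays in the Nash class. The Bierstone--Milman invariant is built functorially from the local defining equations via operations (formation of coefficient ideals, restriction to hypersurfaces of maximal contact, differentiation, radicals) that all preserve the Nash category; hence its maximum loci, which cut out the centres $C_j$, are Nash submanifolds of $M_j$. Blowing up a Nash manifold along a Nash smooth centre produces a Nash manifold covered by explicit Nash affine charts in which $\sigma_{j+1}$ is visibly Nash. This gives conditions (1) and (3) of the definition of a Nash resolution. For (2) and (4), since each $\sigma_{j+1}$ is an isomorphism away from $\sigma_{j+1}^{-1}(C_j)$, the critical set of $\Pi$ is the union of the total transforms of the $C_j$, a normal crossings divisor $D_1 \cup \cdots \cup D_d$ which by construction meets $V^{\prime}$ in simultaneous normal crossings. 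For (5), take $T$ to be the image under $\Pi$ of $V^{\prime} \cap (D_1 \cup \cdots \cup D_d)$, which is a proper (thin) Nash subset of $V$ over whose complement $\Pi$ restricts to a Nash isomorphism.

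The main technical obstacle, were one to write the argument out in full, lies not in any conceptual novelty but in verifying that the Bierstone--Milman desingularisation invariant and its associated sequence of coefficient ideals remain semialgebraic at every stage when the initial data are Nash. Once this stability under the algorithmic operations is collected, termination in finitely many steps and the normal crossings conclusion transfer directly from the algebraic/analytic theorem of Hironaka and Bierstone--Milman cited in the statement.
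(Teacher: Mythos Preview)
The paper does not give its own proof of this statement: it is recorded as a theorem attributed to Hironaka and to Bierstone--Milman, with citations to \cite{hironaka1, hironaka3, bierstonemilman1, bierstonemilman2, bierstonemilman3}, and the text moves immediately on to the definition of Nash simultaneous resolution. In other words, Theorem~\ref{desingularisation} functions in the paper as an imported black box, not as something proved here.

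Your sketch is therefore not comparable to a proof in the paper, since there is none. As an outline of why the cited result is true in the Nash category it is reasonable: the canonical desingularisation algorithm of Bierstone--Milman is functorial and built from operations that preserve the Nash class, so centres, blowings-up, and strict transforms remain Nash; this is exactly the content of the references cited (in particular \cite{bierstonemilman3}). If anything, one should be explicit that properness of $\Pi$ is inherited because each blow-up along a closed Nash centre is proper, and that the global finiteness of the sequence of blow-ups (not just local termination) in the Nash setting is what the cited works supply. But for the purposes of this paper the correct response is simply to cite the theorem, as the author does.
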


Let ${\mathcal M}, {\mathcal U}, I$ be Nash manifolds, 
and let ${\mathcal V}$ be a Nash set of ${\mathcal U}$.
Let $\Pi : {\mathcal M} \to {\mathcal U}$ be a proper Nash modification,
and let $q : {\mathcal U} \to I$ be an onto Nash submersion.
For $t \in I$, we set $U_t = q^{-1}(t), V_t = {\mathcal V} \cap U_t$
and $M_t = (q \circ \Pi)^{-1}(t)$.
We say that $\Pi$ gives a {\em Nash simultaneous resolution} 
of ${\mathcal V}$ in ${\mathcal U}$ over $I$,
if there is a finite sequence of blowings-up
$\tilde{\sigma}_{j+1} : {\mathcal M}_{j+1} \to {\mathcal M}_j$
with smooth centres $\tilde{C}_j$ (where $\tilde{\sigma}_j, 
{\mathcal M}_j$ and $\tilde{C}_j$ are of Nash class) such that:

(1) $\Pi$ is the composite of $\tilde{\sigma}_j$'s.

(2) The critical set of $\Pi$ is a union of Nash divisors
${\mathcal D}_1, \cdots , {\mathcal D}_d$.

(3) ${\mathcal V}^{\prime}$ (: the strict transform of ${\mathcal V}$
in ${\mathcal M}$ by $\Pi$) is a Nash submanifold of ${\mathcal M}$.

(4) ${\mathcal V}^{\prime}, {\mathcal D}_1, \cdots , {\mathcal D}_d$
simultaneously have normal crossings. The restrictions

\quad $q \circ \Pi|_{{\mathcal V}^{\prime}} : {\mathcal V}^{\prime} \to I$,

\quad $q \circ \Pi|_{{\mathcal D}_{j_1} \cap \cdots \cap {\mathcal D}_{j_s}} :
{\mathcal D}_{j_1} \cap \cdots \cap {\mathcal D}_{j_s} \to I$,

\quad $q \circ \Pi|_{{\mathcal V}^{\prime} \cap {\mathcal D}_{j_1} \cap \cdots
\cap {\mathcal D}_{j_s}} : {\mathcal V}^{\prime} \cap {\mathcal D}_{j_1} 
\cap \cdots
\cap {\mathcal D}_{j_s} \to I\ (1 \le j_1 < \cdots < j_s \le d)$

\leftline{are onto submersions.}

(5)  There is a thin Nash set ${\mathcal T}$
in ${\mathcal V}$ so that ${\mathcal T} \cap V_t$ is a thin set
in $V_t$ for each $t \in I$, and that
$\Pi|_{\Pi^{-1}({\mathcal V}-{\mathcal T})} 
: \Pi^{-1}({\mathcal V}-{\mathcal T})
\to {\mathcal V} - {\mathcal T}$ is a Nash isomorphism.

Let $\Pi : {\mathcal M} \to {\mathcal U}$ be a Nash simultaneous
resolution of a Nash variety ${\mathcal V} = \{ F = 0 \}$
in ${\mathcal U}$ over $I$, and let $t_0 \in I$.
We say that $\Pi$ gives a {\em Nash trivial simultaneous resolution}
of ${\mathcal V}$ in ${\mathcal U}$ over $I$,
if there is a Nash diffeomorphism 
$\phi : {\mathcal M} \to M_{t_0} \times I$ such that

(1) $(q \circ \Pi) \circ \phi^{-1} : M_{t_0} \times I \to I$
is the natural projection,

(2) $\phi ({\mathcal V}^{\prime}) = V_{t_0}^{\prime} \times I, \
\phi ({\mathcal D}_{j_1} \cap \cdots \cap {\mathcal D}_{j_s}) =
(D_{j_1,t_0} \cap \cdots \cap D_{j_s,t_0}) \times I$ and

\noindent $\phi ({\mathcal V}^{\prime} \cap {\mathcal D}_{j_1} \cap \cdots 
\cap {\mathcal D}_{j_s})
= (V_{t_0}^{\prime} \cap D_{j_1,t_0} \cap \cdots \cap D_{j_s,t_0})
\times I\ (1 \le j_1 < \cdots < j_s \le d)$.

\subsection{Blow-Nash triviality and Blow-semialgebraic triviality.}

Let $N$ and $Q$ be Nash manifolds, and let
$F : N \times Q \to \R^k$ be a Nash mapping.

Let $\Pi : \mathcal{M} \to N \times Q$ be a Nash trivial simultaneous
resolution of $F^{-1}(0)$ in $N \times Q$ over $Q$.
We say that $(N \times Q,F^{-1}(0))$ admits a
$\Pi$-{\em Blow-Nash trivialisation along} $Q$,
if the Nash trivialisation upstairs induces a semialgebraic
one of $(N \times Q,F^{-1}(0))$ over $Q$.

Let $\Pi : \mathcal{M} \to N \times Q$ be a Nash simultaneous
resolution of $F^{-1}(0)$ in $N \times Q$ over $Q$.
We say that $(N \times Q,F^{-1}(0))$ admits a
$\Pi$-{\em Blow-semialgebraic trivialisation along} $Q$,
if there is a $t$-level preserving semialgebraic homeomorphism
upstairs which induces a semialgebraic one of
$(N \times Q,F^{-1}(0))$ over $Q$.
We denote by $V^{\prime}$ be the strict transform of 
$V = F^{-1}(0)$ by $\Pi$, 
and by $MV$ the main part of $V$, that is,
the set of points $x \in V$ such that the local dimension 
of $V$ at $x$ equals to the dimension of $V$.
We further say that $MV$ admits 
a $\Pi$-{\em Blow-semialgebraic trivialisation along} $Q$,
if there is a semialgebraic trivialisation of 
$V^{\prime}$
upstairs which induces a semialgebraic one of
$MV$ over $Q$.

\subsection{Blow-semialgebraic triviality consistent with 
a compatible filtration.}
Let $N$ be a nonsingular algebraic variety,
let $Q$ be a Nash manifold,
and let $F : N \times Q \to \R^k$ be a polynomial mapping.
Set $V = F^{-1}(0)$.
Let $V = V^{(0)} \supset V^{(1)} \supset \cdots \supset V^{(r)}$
be a filtration of $V$ by algebraic subsets.
We call it a {\em compatible filtration} of $V$, if 

(1) $\dim V^{(0)} > \dim V^{(1)} > \cdots > \dim V^{(r)}$.

(2) For $0 \le j \le r - 1$, $V^{(j)} \setminus \bigcup_{i=0}^j MV^{(i)}$ 
is not empty and $V^{(j+1)} \supset V^{(j)} \setminus 
\bigcup_{i=0}^j MV^{(i)}$; $V = \bigcup_{i=0}^r MV^{(i)}$.

\noindent Incidentally we call the above filtration 
the {\em canonical filtration} of $V$,
if each $V^{j+1}$ is the Zariski closure of
$V^{(j)} \setminus \bigcup_{i=0}^j MV^{(i)}$, $0 \le j \le r - 1$.

Note that
$$
V = MV^{(0)} \cup (MV^{(1)} \setminus MV^{(0)}) \cup \cdots \cup
(MV^{(r)} \setminus \bigcup_{j=0}^{r-1} MV^{(j)}).
$$

We say that $V$ admits a {\em Blow-semialgebraic trivialisation 
consistent with a compatible filtration along} $Q$,
if there is a compatible filtration of $V$,
$V = V^{(0)} \supset V^{(1)} \supset \cdots \supset V^{(r)}$,
with the following properties:

There are a $t$-level preserving semialgebraic homeomorphism $\sigma$
trivialising $V$ over $Q$ and an algebraic simultaneous resolutions 
$\Pi^{(j)} : {\mathcal M}^{(j)} \to N \times Q$ of $V^{(j)}$ 
in $N \times Q$ over $Q$, $0 \le j \le r$,
such that $MV^{(j)}$ admits a $\Pi^{(j)}$-Blow semialgebraic 
trivialisation along $Q$, $\sigma |_{MV^{(0)}} = \sigma_0$ and
$\sigma |_{MV^{(j)} \setminus \cup_{i=0}^{j-1} MV^{(i)}} 
= \sigma_j |_{MV^{(j)} \setminus \cup_{i=0}^{j-1} MV^{(i)}}$ 
for $1 \le j \le r$.
Here $\sigma_j$, $0 \le j \le r$, is the semialgebraic
trivialisation of $MV^{(j)}$ induced by the $\Pi^{(j)}$-Blow 
semialgebraic trivialisation of $MV^{(j)}$ over $Q$.

Let $N$ and $Q$ be Nash manifolds, and let
$F : N \times Q \to \R^k$ be a Nash mapping.
Set $V = F^{-1}(0)$.
Then we can similarly define the notion of a compatible filtration
of $V$ in the Nash category.
We call it a {\em Nash compatible filtration} of $V$,
and we can define also the notion of
{\em Blow-semialgebraic triviality consistent with a Nash 
compatible filtration} for a family of Nash sets.
In the same way as the Zariski closure, for $S \subset \R^m$
we can define the notion of the {\em Nash closure} of $S$
as the smallest Nash set in $\R^m$ containing $S$.
Therefore we can define the notion of the {\em Nash canonical
filtration} of $V$ similarly.

A compatible filtration of an algebraic set is, by definition, 
a Nash compatible filtration of it.
But the canonical filtration of an algebraic
set is not always the Nash canonical filtration of it
(e.g. Example \ref{example1}).
We give an example to understand the notions of our filtrations
more clearly.

\begin{example}\label{example1}
Let $f : \R^2 \times \R \to \R$ be a polynomial function
defined by
$$
f(x,y,t) = (x^2 + (y^2 + t^2 - y^3)^2)(x^2 + (y + 1)^2 - 1).
$$
Set $V = f^{-1}(0)$.
Then $V$ consists of two irreducible algebraic subsets.
One is a cylinder and another is the union of a connected curve
and the origin contained in the cylinder.

\vspace{4mm}

\unitlength 0.1in
\begin{picture}( 43.7000, 29.8000)(  7.9000,-41.1000)
%
\special{pn 8}%
\special{pa 2818 4110}%
\special{pa 2818 1348}%
\special{fp}%
\special{sh 1}%
\special{pa 2818 1348}%
\special{pa 2798 1416}%
\special{pa 2818 1402}%
\special{pa 2838 1416}%
\special{pa 2818 1348}%
\special{fp}%
\put(27.9000,-13.0000){\makebox(0,0)[lb]{$t$}}%
\put(51.6000,-28.9000){\makebox(0,0)[lb]{$y$}}%
\put(18.3000,-40.5000){\makebox(0,0)[lb]{$x$}}%
\put(28.8000,-30.1000){\makebox(0,0)[lb]{$0$}}%
%
\special{pn 8}%
\special{pa 2328 1822}%
\special{pa 2328 1822}%
\special{fp}%
\special{pa 2344 1846}%
\special{pa 2344 1846}%
\special{fp}%
\special{pa 2344 3834}%
\special{pa 2344 3834}%
\special{fp}%
%
\special{pn 13}%
\special{pa 2202 1830}%
\special{pa 2202 3810}%
\special{fp}%
%
\special{pn 13}%
\special{pa 2818 1822}%
\special{pa 2818 3800}%
\special{fp}%
%
\special{pn 13}%
\special{pa 2194 1846}%
\special{pa 2224 1856}%
\special{pa 2252 1872}%
\special{pa 2278 1890}%
\special{pa 2304 1908}%
\special{pa 2332 1924}%
\special{pa 2360 1938}%
\special{pa 2390 1948}%
\special{pa 2422 1956}%
\special{pa 2454 1962}%
\special{pa 2486 1964}%
\special{pa 2520 1966}%
\special{pa 2552 1968}%
\special{pa 2586 1968}%
\special{pa 2618 1966}%
\special{pa 2650 1960}%
\special{pa 2678 1950}%
\special{pa 2706 1936}%
\special{pa 2732 1918}%
\special{pa 2758 1898}%
\special{pa 2782 1874}%
\special{pa 2802 1854}%
\special{sp}%
%
\special{pn 13}%
\special{pa 2194 1838}%
\special{pa 2208 1808}%
\special{pa 2224 1782}%
\special{pa 2246 1758}%
\special{pa 2270 1740}%
\special{pa 2298 1724}%
\special{pa 2328 1714}%
\special{pa 2360 1704}%
\special{pa 2394 1698}%
\special{pa 2428 1694}%
\special{pa 2462 1692}%
\special{pa 2498 1690}%
\special{pa 2532 1690}%
\special{pa 2564 1692}%
\special{pa 2596 1694}%
\special{pa 2628 1698}%
\special{pa 2658 1704}%
\special{pa 2688 1714}%
\special{pa 2718 1728}%
\special{pa 2746 1744}%
\special{pa 2770 1764}%
\special{pa 2794 1786}%
\special{pa 2810 1814}%
\special{pa 2810 1830}%
\special{sp}%
%
\special{pn 13}%
\special{pa 2194 3800}%
\special{pa 2208 3828}%
\special{pa 2230 3852}%
\special{pa 2258 3872}%
\special{pa 2288 3884}%
\special{pa 2318 3892}%
\special{pa 2350 3898}%
\special{pa 2382 3906}%
\special{pa 2412 3914}%
\special{pa 2444 3922}%
\special{pa 2474 3930}%
\special{pa 2506 3932}%
\special{pa 2538 3930}%
\special{pa 2570 3922}%
\special{pa 2602 3912}%
\special{pa 2632 3902}%
\special{pa 2660 3894}%
\special{pa 2686 3884}%
\special{pa 2714 3870}%
\special{pa 2744 3846}%
\special{pa 2776 3810}%
\special{pa 2806 3782}%
\special{pa 2818 3786}%
\special{pa 2818 3792}%
\special{sp}%
%
\special{pn 8}%
\special{pa 2202 3818}%
\special{pa 2216 3788}%
\special{pa 2232 3762}%
\special{pa 2254 3738}%
\special{pa 2278 3720}%
\special{pa 2306 3706}%
\special{pa 2336 3694}%
\special{pa 2368 3684}%
\special{pa 2402 3678}%
\special{pa 2436 3674}%
\special{pa 2470 3672}%
\special{pa 2506 3670}%
\special{pa 2540 3670}%
\special{pa 2572 3672}%
\special{pa 2604 3674}%
\special{pa 2636 3678}%
\special{pa 2666 3684}%
\special{pa 2696 3694}%
\special{pa 2726 3706}%
\special{pa 2752 3724}%
\special{pa 2778 3744}%
\special{pa 2802 3766}%
\special{pa 2818 3794}%
\special{pa 2818 3810}%
\special{sp -0.045}%
%
\special{pn 8}%
\special{pa 790 2830}%
\special{pa 5070 2830}%
\special{fp}%
\special{sh 1}%
\special{pa 5070 2830}%
\special{pa 5004 2810}%
\special{pa 5018 2830}%
\special{pa 5004 2850}%
\special{pa 5070 2830}%
\special{fp}%
\put(27.7000,-28.5000){\makebox(0,0)[lb]{$\bullet$}}%
%
\special{pn 13}%
\special{pa 3850 3880}%
\special{pa 3844 3848}%
\special{pa 3836 3818}%
\special{pa 3826 3786}%
\special{pa 3816 3758}%
\special{pa 3800 3730}%
\special{pa 3784 3702}%
\special{pa 3764 3678}%
\special{pa 3742 3654}%
\special{pa 3718 3632}%
\special{pa 3692 3610}%
\special{pa 3666 3590}%
\special{pa 3640 3572}%
\special{pa 3612 3554}%
\special{pa 3584 3538}%
\special{pa 3556 3520}%
\special{pa 3530 3504}%
\special{pa 3502 3488}%
\special{pa 3474 3472}%
\special{pa 3448 3454}%
\special{pa 3422 3436}%
\special{pa 3396 3416}%
\special{pa 3372 3396}%
\special{pa 3350 3374}%
\special{pa 3328 3350}%
\special{pa 3308 3324}%
\special{pa 3288 3296}%
\special{pa 3270 3270}%
\special{pa 3252 3242}%
\special{pa 3236 3214}%
\special{pa 3220 3186}%
\special{pa 3206 3158}%
\special{pa 3190 3134}%
\special{pa 3176 3110}%
\special{pa 3164 3086}%
\special{pa 3152 3060}%
\special{pa 3140 3030}%
\special{pa 3132 2996}%
\special{pa 3128 2958}%
\special{pa 3126 2910}%
\special{pa 3126 2860}%
\special{pa 3126 2818}%
\special{pa 3122 2792}%
\special{pa 3112 2796}%
\special{pa 3100 2820}%
\special{sp}%
%
\special{pn 13}%
\special{pa 3850 1730}%
\special{pa 3844 1762}%
\special{pa 3836 1794}%
\special{pa 3826 1824}%
\special{pa 3816 1854}%
\special{pa 3800 1882}%
\special{pa 3784 1908}%
\special{pa 3764 1934}%
\special{pa 3742 1958}%
\special{pa 3718 1980}%
\special{pa 3692 2000}%
\special{pa 3666 2020}%
\special{pa 3640 2040}%
\special{pa 3612 2056}%
\special{pa 3584 2074}%
\special{pa 3556 2090}%
\special{pa 3530 2106}%
\special{pa 3502 2122}%
\special{pa 3474 2140}%
\special{pa 3448 2156}%
\special{pa 3422 2176}%
\special{pa 3396 2194}%
\special{pa 3372 2216}%
\special{pa 3350 2238}%
\special{pa 3328 2262}%
\special{pa 3308 2288}%
\special{pa 3288 2314}%
\special{pa 3270 2342}%
\special{pa 3252 2370}%
\special{pa 3236 2398}%
\special{pa 3220 2426}%
\special{pa 3206 2452}%
\special{pa 3190 2478}%
\special{pa 3176 2502}%
\special{pa 3164 2526}%
\special{pa 3152 2552}%
\special{pa 3140 2580}%
\special{pa 3132 2614}%
\special{pa 3128 2654}%
\special{pa 3126 2702}%
\special{pa 3126 2752}%
\special{pa 3126 2794}%
\special{pa 3122 2818}%
\special{pa 3112 2816}%
\special{pa 3100 2790}%
\special{sp}%
%
\special{pn 8}%
\special{pa 3310 2130}%
\special{pa 1980 3940}%
\special{fp}%
\special{sh 1}%
\special{pa 1980 3940}%
\special{pa 2036 3898}%
\special{pa 2012 3898}%
\special{pa 2004 3874}%
\special{pa 1980 3940}%
\special{fp}%
\end{picture}%

\vspace{4mm}

Let $V^{(0)} = V$ and $V^{(1)}$ be the Zariski closure
of $V^{(0)} \setminus MV^{(0)}$.
Then we have $V^{(1)} = (V^{(0)} \setminus MV^{(0)}) \cup
\{ (0,0,0) \}$ and $V = MV^{(0)} \cup MV^{(1)}$.
Therefore $V^{(0)} \supset V^{(1)}$ is the canonical filtration
of $V$.
We consider the section $W$ of $V$ by the plane $\{ t = 0 \}$.
Let $W^{(0)} = W$ and $W^{(1)} = V^{(1)} \cap \{ t = 0 \}$.
Then $W^{(1)} = \{ (0,0,0), (0,1,0) \}$ and 
$W^{(0)} \setminus MW^{(0)} = \{ (0,1,0) \}$ is an algebraic
subset of $W$.
Therefore $W^{(0)} \supset W^{(1)}$ is not the canonical
filtration but a compatible one of $W$.

Next let $V^{(0)} = V$ and $V^{(1)}$ be the Nash closure
of $V^{(0)} \setminus MV^{(0)}$.
Then we have $V^{(1)} = V^{(0)} \setminus MV^{(0)}$ 
and $V = MV^{(0)} \cup MV^{(1)}$.
Therefore $V^{(0)} \supset V^{(1)}$ is the Nash canonical filtration
of $V$.
Let $W = V \cap \{ t = 0 \}$ and $W^{(1)} = V^{(1)} \cap \{ t = 0 \}$.
In this case, $W^{(0)} = W \supset W^{(1)}$ is still the Nash canonical
filtration of $W$.
\end{example}


\bigskip
\section{Nash Isotopy Lemma for finiteness property.}
\label{nashisotopylemma}
\medskip

We first recall the Nash Isotopy Lemma proved in \cite{fukuikoikeshiota}.
Let $M \subset \R^m$ be a Nash manifold possibly with boundary,
and let $N_1, \cdots , N_q$ be Nash submanifolds of $M$
possibly with boundary which together with $N_0 = \partial M$
have normal crossings.
Assume that $\partial N_i \subset N_0$, $i = 1, \cdots , q$.
Then we have

\begin{thm}\label{nashisotopy} (\cite{fukuikoikeshiota}).
Let $\varpi : M \to \R^p$, $p > 0$, be a proper onto
Nash submersion such that for every
$0 \le i_1 < \cdots < i_s \le q$,
$\varpi |_{N_{i_1} \cap \cdots \cap N_{i_s}} :
N_{i_1} \cap \cdots \cap N_{i_s} \to \R^p$ is a proper onto submersion
$($unless $N_{i_1} \cap \cdots \cap N_{i_s} = \emptyset )$.
Then there exists a Nash diffeomorphism
$$
\varphi = (\varphi^{\prime},\varpi ) : (M; N_1, \cdots , N_q)
\to (M^{*}; N_1^{*}, \cdots , N_q^{*}) \times \R^p
$$
such that $\varphi |_{M^{*}} =$ id, where $Z^{*}$ denotes
$Z \cap \varpi^{-1}(0)$ for a subset $Z$ of $M$.

Furthermore, if previously given are Nash diffeomorphisms
$\varphi_{i_j} : N_{i_J} \to N_{i_j}^{*} \times \R^p$,
$0 \le i_1 < \cdots < i_a \le q$, such that
$\varpi \circ \varphi_{i_j}^{-1}$ is the natural projection,
and $\varphi_{i_s} = \varphi_{i_t}$ on $N_{i_s} \cap N_{i_t}$,
then we  can choose a Nash diffeomorphism $\varphi$ which
satisfies $\varphi |_{N_{i_j}} = \varphi_{i_j}$,
$j = 1, \cdots a$.
\end{thm}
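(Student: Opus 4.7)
The plan is to prove Theorem \ref{nashisotopy} by induction on $p$, with the case $p=1$ carrying the essential content: once a trivialization over a single $\R$-direction is established (with all the normal-crossing tangency requirements preserved), applying the argument successively in each coordinate direction and composing yields the trivialization over $\R^p$. For $p=1$, the standard route is to produce a Nash vector field $X$ on $M$ with $d\varpi(X) = 1$ that is tangent to $N_0 = \partial M$ and, more generally, to every nonempty intersection $N_{i_1} \cap \cdots \cap N_{i_s}$. Properness of $\varpi$ and of its restrictions to these intersections guarantees that $X$ is complete and that its flow $\phi_s$ preserves each stratum. The desired Nash diffeomorphism is then $\varphi(x) = (\phi_{-\varpi(x)}(x), \varpi(x))$ with $M^* = \varpi^{-1}(0)$; this automatically restricts to the identity on $M^*$ and sends each $N_{i_1} \cap \cdots \cap N_{i_s}$ to its product with $\R$.

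To construct $X$ I would work as follows. Locally at a point of $M$ the normal crossing hypothesis gives Nash coordinates $(y_1,\dots,y_{m-1},z)$ in which each relevant $N_{i_j}$ is a coordinate hyperplane $\{y_{i_j}=0\}$ and, after a Nash change of the $z$-coordinate using that $\varpi$ is a submersion with image $\R$, the function $\varpi$ becomes $z$; then $\partial/\partial z$ is a local Nash lift tangent to every stratum. Passing from these local lifts to a global Nash vector field cannot be done by a Nash partition of unity (since a nonzero Nash function cannot have compact support), so I would proceed in two steps: first obtain a $C^{\infty}$ stratified lift by patching with an ordinary partition of unity, and then upgrade it to a Nash lift by applying a relative Nash approximation theorem of Malgrange--Shiota type, using Theorem \ref{lojasiewicz} (together with Remark \ref{remark11}) to decompose $M$ into finitely many semialgebraic pieces on which the approximation is effective. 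The tangency constraints and the pointwise linear condition $d\varpi(X)=1$ are semialgebraic and affine in $X$, so they are preserved under such an approximation after a small correction.

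For the \emph{furthermore} clause, the prescribed Nash trivializations $\varphi_{i_j}$ on the $N_{i_j}$ determine Nash vector fields $X_{i_j}$ on $N_{i_j}$ that lift $\partial/\partial z$, are tangent to the relevant substrata, and agree on overlaps $N_{i_s} \cap N_{i_t}$. I would then solve a \emph{relative} version of the preceding vector field problem: extend the coherent system $\{X_{i_j}\}$ to a Nash vector field on $M$ still satisfying $d\varpi(X)=1$ and the tangency conditions. A smooth extension is produced first via a $C^{\infty}$ tubular neighborhood of $\bigcup_j N_{i_j}$ together with a partition of unity, and then relative Nash approximation rel $\bigcup_j N_{i_j}$ produces the required Nash extension. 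The main obstacle throughout is precisely this Nash-category globalization: there is no Nash partition of unity, so every patching step must be replaced by a smooth construction followed by a Nash approximation that is accurate enough to preserve both the submersion-lift equation and the full collection of tangency conditions, and, in the relative case, to fix the prescribed values on the given $N_{i_j}$. Once this is handled, integration of the resulting vector field and the induction on $p$ are routine.
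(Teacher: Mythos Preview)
The paper does not prove Theorem~\ref{nashisotopy}; it is quoted from \cite{fukuikoikeshiota} as a known result and then used as a tool (notably in the proof of Proposition~\ref{reduction}). So there is no proof in this paper to compare your proposal against.

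That said, your sketch is the correct strategy and matches what the paper hints at when it writes ``using a similar argument to the proof of Theorem~\ref{nashisotopy} based on Shiota's Nash approximation theorem'': build a $C^\infty$ (or $C^1$) stratum-tangent lift of $\partial/\partial t$ by partition of unity, then upgrade to a Nash vector field via Shiota's approximation theorem, integrate, and induct on $p$. Two small caveats. First, your appeal to Theorem~\ref{lojasiewicz} (semialgebraic triangulation) is not the right tool for the globalization step; what is actually needed is Shiota's Nash approximation theorem (see \cite{shiota1}), which lets you approximate a semialgebraic $C^r$ map between Nash manifolds by a Nash map, relatively on Nash submanifolds. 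Second, the induction on $p$ is not entirely routine in the Nash category: after trivializing in the first $\R$-direction you must check that the resulting data (the fiber $M^*$ with the induced $N_i^*$'s and the residual projection to $\R^{p-1}$) again satisfies the hypotheses, including properness, and that the composed map remains Nash; this is true but should be said explicitly. With those adjustments, your outline is essentially the argument of \cite{fukuikoikeshiota}.
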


\begin{rem}\label{remark21}
In the theorem above we can replace $\R^p$ by a Nash open simplex.
\end{rem}

Using the Nash Isotopy Lemma above, 
we prove finiteness theorems in \cite{fukuikoikeshiota, koike1} 
on the existence of Nash trivial simultaneous resolution and 
on Blow-Nash triviality for a family of zero-sets of Nash mappings 
defined over a compact Nash manifold or for a family of Nash set-germs.
In this section we show a kind of modified Isotopy Lemma 
without the assumption of properness which is applicable to
the non-compact case.

Let $M \subset \R^m$ be a Nash manifold, and let $N_1, \cdots , N_q$ 
be Nash submanifolds of $M$ which have normal crossings.
For our purpose, we may assume that $M$, $N_1, \cdots , N_q$ are Nash 
manifolds without boundary and $N_1, \cdots , N_q$ are closed in $M$.
Then we have

\vspace{3mm}

\noindent {\bf Theorem I.} {\em Let $\varpi : M \to \R^p$, $p > 0$, 
be an onto Nash submersion such that 
for every $1 \le i_1 < \cdots < i_s \le q$,
$\varpi |_{N_{i_1} \cap \cdots \cap N_{i_s}} :
N_{i_1} \cap \cdots \cap N_{i_s} \to \R^p$ is an onto submersion
$($unless $N_{i_1} \cap \cdots \cap N_{i_s} = \emptyset )$.
Then there is a finite partition of $\R^p$ into Nash
open simplices $Q_j$, $j = 1, \cdots , b$, and for any $j$,
there exists a Nash diffeomorphism
$$
\varphi_j = (\varphi_j^{\prime},\varpi_j ) : (M_j; N_{1,j}, \cdots , 
N_{q,j}) \to (M_j^{*}; N_{1,j}^{*}, \cdots , N_{q,j}^{*}) \times Q_j
$$
such that} $\varphi_j |_{M_j^{*}} =$ id
{\em where $M_j = \varpi^{-1}(Q_j)$, $\varpi_j = \varpi |_{M_j}$,
$N_{1.j} = N_1 \cap \varpi^{-1}(Q_j), \cdots , 
N_{q.j} = N_q \cap \varpi^{-1}(Q_j)$, and $Z^{*}$ denotes
$Z \cap \varpi_j^{-1}(P)$ for a subset $Z$ of $M_j$
and some point $P \in Q_j$}.

\vspace{3mm}

Thanks to the Semialgebraic Triangulation Theorem, to see our
modified Nash Isotopy Lemma, it suffices to show the following
weak form:

\begin{prop}\label{reduction}
Under the same assumption as Theorem I, there is a finite partition
$\R^p = Q_1 \cup \cdots \cup Q_b \cup R$ which satisfies the following
conditions:
 
 (1) Each $Q_j$ is a Nash open simplex of $\dim p$, and $R$ is 
a semialgebraic subset of $\R^p$ of dimension less than $p$.
 
 (2) For each $j$, $1 \le j \le b$,
there exists a Nash diffeomorphism
$$
\varphi_j = (\varphi_j^{\prime},\varpi_j ) : (M_j; N_{1,j}, \cdots , 
N_{q,j}) \to (M_j^{*}; N_{1,j}^{*}, \cdots , N_{q,j}^{*}) \times Q_j
$$
such that $\varphi_j |_{M_j^{*}} =$ id.
\end{prop}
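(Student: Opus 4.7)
My plan combines Hardt's Semialgebraic Triviality Theorem with the Semialgebraic Triangulation Theorem to produce the partition, and then upgrades a semialgebraic trivialization to a Nash one over each top-dimensional piece by invoking the proper Nash Isotopy Lemma (Theorem~\ref{nashisotopy}) on suitably chosen compact subfamilies.

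First, I would view $M$ and all intersections $N_{i_1} \cap \cdots \cap N_{i_s}$ as semialgebraic subsets of $\R^m \times \R^p$ via the graph of $\varpi$ (justified by Tarski--Seidenberg, Theorem~\ref{tarski-seidenberg}), and apply Hardt's Theorem (Theorem~\ref{hardt}) with respect to the projection onto $\R^p$. This yields a finite semialgebraic partition $\R^p = B_1 \sqcup \cdots \sqcup B_\ell$ over each piece of which $\varpi$ is semialgebraically trivial, compatibly with all intersections of the $N_i$'s. Next, applying the Semialgebraic Triangulation Theorem (Theorem~\ref{lojasiewicz}, together with Remark~\ref{remark11} to handle the unboundedness of $\R^p$) to the collection $\{B_j\}$ produces a semialgebraic automorphism $\tau$ of $\R^p$ and a simplicial decomposition whose images under $\tau$ are Nash open simplices refining $\{B_j\}$. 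Declaring $Q_1, \ldots, Q_b$ to be the $p$-dimensional open simplices and $R$ the union of the lower-dimensional ones gives condition~(1).

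Over each $Q_j$, I would construct the required Nash diffeomorphism $\varphi_j$. Since $\varpi$ is not proper, Theorem~\ref{nashisotopy} does not apply directly to $\varpi_j : M_j \to Q_j$. The strategy is to exploit the semialgebraic product structure $M_j \cong F \times Q_j$ given by Hardt's theorem in order to choose a finite nested family $K_1 \subset K_2 \subset \cdots \subset K_s$ of closed Nash submanifolds with boundary that are proper over $Q_j$, are compatible with the normal-crossings stratification by the $N_{i,j}$'s and their intersections, and exhaust $M_j$. Applying Theorem~\ref{nashisotopy} iteratively to each $\varpi_j|_{K_n}$, together with its addendum clause which allows prescribing the trivialization on the closed Nash subfamily $K_{n-1}$, I would inductively extend the Nash trivialization and eventually obtain a Nash diffeomorphism $\varphi_j$ on $M_j$.

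The main obstacle is clearly this last step: bypassing the properness hypothesis of Theorem~\ref{nashisotopy} while maintaining Nash regularity. The semialgebraic product structure from Hardt's theorem supplies a ``topological properness'' substitute, and the simple Nash structure of $Q_j$ as an open simplex, together with the addendum of Theorem~\ref{nashisotopy}, should allow the compact exhaustion to be organized as a finite inductive procedure. The delicate point is showing that such a finite exhaustion exists and adequately captures the stratified structure of the fibers near infinity; here the semialgebraic finiteness forced by Tarski--Seidenberg is the essential input.
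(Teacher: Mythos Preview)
Your outline has a genuine gap in the final step. You propose to exhaust $M_j$ by a \emph{finite} nested family $K_1\subset\cdots\subset K_s$ of closed Nash submanifolds with boundary, each \emph{proper} over $Q_j$. But if $K_s=M_j$ then $\varpi_j$ itself would be proper, which is precisely what fails in the non-compact situation; and if $K_s\subsetneq M_j$ then a finite family cannot exhaust. An infinite exhaustion is no remedy either: the inductive extension furnished by the addendum to Theorem~\ref{nashisotopy} would produce only a sequence of Nash diffeomorphisms on larger and larger compacta, and there is no mechanism in the Nash (real-analytic, semialgebraic) category for taking a limit of such data and obtaining a single Nash diffeomorphism on all of $M_j$. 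The Hardt product structure you invoke is only a semialgebraic homeomorphism, so pulling back a compact exhaustion of the model fiber does not produce Nash submanifolds with the required transversality to the $N_{i,j}$'s.

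The paper's argument supplies exactly the missing device. One first one-point compactifies $M$ (via $\R^m\subset S^m$) so that $\overline{M}\setminus M$ is a single point, and replaces $M$ by the graph $M'$ of $\varpi$. Then $\overline{\varpi'}$ extends semialgebraically to $\overline{M'}$, and after discarding a lower-dimensional $R\subset\R^p$ one arranges that over each $Q_j$ the ``boundary at infinity'' $\hat M_j=\overline{M'}\setminus M'$ is either empty (so Theorem~\ref{nashisotopy} applies directly) or is a Nash section of $\overline{\varpi'}$. In the latter case the fiberwise distance $\alpha(x)$ to this section is a Nash function on a collar $U=\alpha^{-1}((0,1])$, and the pair $(\varpi',\alpha)|_U$ is \emph{proper} onto $Q_j\times(0,1]$. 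Thus Theorem~\ref{nashisotopy} yields Nash triviality on $U$, and separately on the genuinely proper piece $M'\setminus\mathrm{Int}\,U$; the two trivializations are then matched along $\alpha^{-1}(1)$ using the addendum and Shiota's Nash approximation. In other words, the proof achieves a \emph{two-piece} decomposition, not an exhaustion, and the key is the auxiliary Nash function $\alpha$ manufacturing properness in an extra variable---an idea your proposal does not contain.
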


\begin{proof}
As a tacit understanding, we are assuming that $\dim M \ge p$.
In the case where $\dim M = p$, there is a finite partition
$\R^p = Q_1 \cup \cdots \cup Q_b$ into Nash open simplices such that
for $1 \le j \le b$,
$$
\varpi |_{\varpi^{-1}(Q_j)} : \varpi^{-1}(Q_j) \to Q_j
$$
is Nash trivial.
Incidentally, each restricted mapping is proper.
Therefore, from the beginning we may assume that $\dim M > p$.

Let $M \subset \R^{m^{\prime}}$ (where $\R^m \subset \R^{m^{\prime}}$)
so that $\overline{M} \setminus M$ is a point and $M$ is bounded.
Here $\overline{M}$ denotes the closure of $M$ in $\R^{m^{\prime}}$.
Replace $M$ and $\varpi$ by $M^{\prime} =$ graph $\varpi$ and 
$\varpi^{\prime} : M^{\prime} \to \R^p$ the projection, respectively.
Then $\varpi^{\prime}$ is extended to a semialgebraic mapping
$\overline{\varpi^{\prime}} : \overline{M^{\prime}} \to \R^p$,
and for each $y \in \R^p$, $(\overline{\varpi^{\prime}})^{-1}(y)
\setminus {\varpi^{\prime}}^{-1}(y)$ is empty or a point.
In this paper a semialgebraic mapping means a continuous mapping 
whose graph is semialgebraic.
There are a closed semialgebraic subset $X$ of 
$\overline{M^{\prime}} \setminus M^{\prime}$ of $\dim < p$
and a finite partition $\R^p = Q_1 \cup \cdots \cup Q_b \cup R$
where $R = \overline{\varpi^{\prime}}(X)$,
which satisfy the following conditions:

(1) Each $Q_j$ is a Nash open simplex of $\dim p$.

(2) If $\hat{M}_j = (\overline{M^{\prime}} \setminus M^{\prime})
\cap (\overline{\varpi^{\prime}})^{-1}(Q_j)$ is not empty,
then it is a Nash manifold and $\overline{\varpi^{\prime}} |_{\hat{M}_j}
: \hat{M}_j \to Q_j$ is a Nash diffeomorphism.
In addition, if for $1 \le i_1 < \cdots < i_a \le q$,
$\overline{N_{i_1}^{\prime} \cap \cdots \cap N_{i_a}^{\prime}}
\cap \hat{M}_j \ne \emptyset$, then $\hat{M}_j \subset
\overline{N_{i_1}^{\prime} \cap \cdots \cap N_{i_a}^{\prime}}$,
where each $N_i^{\prime}$ is defined similarly to $M^{\prime}$.

In the case where $\hat{M}_j = \emptyset$, by Theorem \ref{nashisotopy},
there exists a Nash diffeomorphism
$$
\varphi_j = (\varphi_j^{\prime},\varpi_j ) : (M_j; N_{1,j}, \cdots , 
N_{q,j}) \to (M_j^{*}; N_{1,j}^{*}, \cdots , N_{q,j}^{*}) \times Q_j
$$
such that $\varphi_j |_{M_j^{*}} =$ id.
Therefore we assume $\hat{M}_j \ne \emptyset$ after this.
Since $Q_j$ is a Nash open simplex of $\dim p$, it is Nash diffeomorphic 
to $\R^p$. 
For simplicity, we assume the following:

(1) We regard $Q_j$ as $\R^p$.

(2) $\overline{M^{\prime}} \setminus M^{\prime}$ is a Nash manifold, and
$\overline{\varpi^{\prime}} |_{\overline{M^{\prime}} \setminus M^{\prime}} :
\overline{M^{\prime}} \setminus M^{\prime} \to \R^p$ is a Nash diffeomorphism.

(3) For $1 \le i_1 < \cdots < i_a \le q$, if
$\overline{N_{i_1}^{\prime} \cap \cdots \cap N_{i_a}^{\prime}}
\cap (\overline{M^{\prime}} \setminus M^{\prime}) \ne \emptyset$, 
then $\overline{M^{\prime}} \setminus M^{\prime} \subset
\overline{N_{i_1}^{\prime} \cap \cdots \cap N_{i_a}^{\prime}}$.

Set $\alpha (x) =$ dis $(x,(\overline{\varpi^{\prime}}|_{\overline
{M^{\prime}} \setminus M^{\prime}})^{-1}(\overline{\varpi^{\prime}}(x)))$
for $x \in \overline{M^{\prime}}$. 
Then there is a closed semialgebraic neighbourhood $W$ of
$\overline{M^{\prime}} \setminus M^{\prime}$ in $\overline{M^{\prime}}$
with the following properties:

(1) $\alpha$ is $C^{\infty}$ smooth on $U = M^{\prime} \cap W$.

(2) For each $y \in \R^p$, $\alpha |_{U \cap {\varpi^{\prime}}^{-1}(y)}$
is $C^{\infty}$ regular, that is $(\varpi^{\prime},\alpha )$ is
$C^{\infty}$ regular on $U$.

(3) Let $1 \le i_1 < \cdots < i_a \le q$.
In the case where $\overline{N_{i_1}^{\prime} \cap \cdots \cap 
N_{i_a}^{\prime}} \cap (\overline{M^{\prime}} \setminus M^{\prime}) 
= \emptyset$, we have
$(N_{i_1}^{\prime} \cap \cdots \cap N_{i_a}^{\prime}) \cap U = \emptyset$.
In the case where $\overline{N_{i_1}^{\prime} \cap \cdots \cap 
N_{i_a}^{\prime}} \cap (\overline{M^{\prime}} \setminus M^{\prime}) 
\ne \emptyset$, $\alpha |_{U \cap (N_{i_1}^{\prime} \cap \cdots \cap 
N_{i_a}^{\prime}) \cap {\varpi^{\prime}}^{-1}(y)}$ 
is $C^{\infty}$ regular for each $y \in \R^p$.

Multiply $\alpha$ with some large (positive) Nash function in
variables of $\R^p$.
Then we can assume that $U = \alpha^{-1}((0,1])$.
Since $(\varpi^{\prime},\alpha )|_U$ is proper onto $\R^p \times (0,1]$,
$(\varpi^{\prime},\alpha )|_{U \cap (N_{i_1}^{\prime} \cap \cdots \cap 
N_{i_a}^{\prime})}$'s (where
$\overline{N_{i_1}^{\prime} \cap \cdots \cap N_{i_a}^{\prime}} 
\cap (\overline{M^{\prime}} \setminus M^{\prime}) \ne \emptyset$)
are also proper onto $\R^p \times (0,1]$.
Therefore, by Theorem \ref{nashisotopy},
$(\varpi^{\prime},\alpha )|_{(U;N_1^{\prime} \cap U, \cdots ,
N_q^{\prime} \cap U)}$ is Nash trivial over $\R^p \times (0,1]$.
Hence $\varpi^{\prime}|_{(U;N_1^{\prime} \cap U, \cdots ,
N_q^{\prime} \cap U)}$ is Nash trivial over $\R^p$.
On the other hand, we see that $\varpi^{\prime}|_{(M^{\prime} \setminus 
Int U;N_1^{\prime} \cap (M^{\prime} \setminus Int U), \cdots ,
N_q^{\prime} \cap (M^{\prime} \setminus Int U))}$ 
is also Nash trivial over $\R^p$.
Recall the proof of Theorem \ref{nashisotopy}.
Then it turns out that we can construct the Nash triviality of
$\varpi^{\prime}|_{(M^{\prime} \setminus Int U;
N_1^{\prime} \cap (M^{\prime} \setminus Int U), \cdots ,
N_q^{\prime} \cap (M^{\prime} \setminus Int U))}$ over $\R^p$
which follows from the Nash one of $\varpi^{\prime}|_{(U;N_1^{\prime} \cap U, 
\cdots ,N_q^{\prime} \cap U)}$ over $\R^p$
in the $C^1$ Nash category.
Thus, using a similar argument to the proof of Theorem \ref{nashisotopy}
based on Shiota's Nash approximation theorem, we obtain a Nash triviality of 
$\varpi^{\prime} : (M^{\prime};N_1^{\prime}, \cdots ,N_q^{\prime})
\to \R^p$.

This completes the proof of the proposition.
\end{proof}

Using Theorem I we can improve some finiteness theorems
proved in \cite{koike1}.
Let $N$ be a Nash manifold, and let $J$ be a semialgebraic 
set in some Euclidean space.
Let $f_t : N \to \R^k\ (t \in J)$ be a Nash mapping.
Define $F : N \times J \to \R^k$ by $F(x;t) = f_t(x)$.
Assume that $F$ is a Nash mapping. Set
$$
K = \{t \in J\ |\ f_t^{-1}(0) \cap S(f_t) \ is \ isolated \}.
$$
As seen in \cite{koike1}, $K$ is a semialgebraic subset of $J$.
By a similar argument to \cite{koike1} with Theorem I,
we can show the following:

\vspace{3mm}

\noindent {\bf Theorem II.} 
{\em There exists a finite partition 
$$
J = Q_1 \cup \cdots \cup Q_s \cup Q_{s+1} \cup \cdots \cup Q_u
$$
with $K = Q_1 \cup \cdots \cup Q_s$ 
and $J - K = Q_{s+1} \cup \cdots \cup Q_u$
which satisfies the following conditions:

(1) Each $Q_i$ is a Nash open simplex.

(2) For each $i$, there is a Nash trivial simultaneous resolution
$\Pi_i : \mathcal{M}_i \to N \times Q_i$ of $F_{Q_i}^{-1}(0)$ 
in $N \times Q_i$ over $Q_i$.

In particular, for $1 \le i \le s$, this Nash trivialisation
induces a semialgebraic trivialisation of $F_{Q_i}^{-1}(0)$
in $N \times Q_i$ over $Q_i$.
Therefore $(N \times Q_i,F_{Q_i}^{-1}(0))$
admits a $\Pi_i$-Blow Nash trivialisation along $Q_i$.}


\bigskip
\section{Programme to show finiteness on Blow-semialgebraic triviality.}
\label{programme}
\medskip

Let $N$ be a Nash manifold of dimension $n$,
and let $J$ be a semialgebraic set in some Euclidean space.
Let $f_t : N \to \R^k$ ($t \in J$) be a Nash mapping.
Assume that $F : N \times J \to \R^k$ is a Nash mapping.

In this section we give a programme to show a finiteness
theorem on Blow-semialgebraic triviality for $(N \times J,F^{-1}(0))$
under some assumptions on $N$ and $F$.
As mentioned in the introduction, we divide our programme
into 8 processes.
Processes I, II and III always work for any Nash manifold $N$
and Nash mapping $F$.
On the other hand, some properness condition
is required in Processes IV - VI,
which can be applied to the compact case, namely,
the case where $N$ is a compact Nash manifold.
We terminate our programme in the compact case at Process VI.
To carry out our programme in the non-compact case, 
some device is necessary.
In fact, we use two reduction methods in our programme.
One is a reduction from the non-compact case to the compact case
for a nonsingular algebraic variety $N$,
and another is from the Nash case to the algebraic case
for a non-compact Nash manifold $N$.
We describe the first and second methods as Processes VII
and VIII, respectively.

\vspace{3mm}

\noindent {\bf Process I. Constancy of dimensions.}
Denote
$$
SF^{-1}_Q(0) = \{ (x,t) \in N \times Q \ | \
x \in f_t^{-1}(0) \cap S(f_t) \}
$$
for $Q \subset J$.
Then, by Theorem \ref{hardt}, there is a finite partition
of $J$ into semialgebraic sets $Q_i$ such that
$F_{Q_i}^{-1}(0)$ and $SF_{Q_i}^{-1}(0)$ are
semialgebraically trivial over each $Q_i$.
Therefore, when we consider our finiteness problem,
we may assume from the beginning that
$\dim f_t^{-1}(0)$ and $\dim f_t^{-1}(0) \cap S(f_t)$
are constant over $J$.
In addition, we have already known the following:

(i) In the case where $\dim f_t^{-1}(0) \cap S(f_t) = -1$
i.e. $f_t^{-1}(0) \cap S(f_t) = \emptyset$,
a finiteness theorem holds on Nash triviality (\cite{costeshiota}).

(ii) In the case where $\dim f_t^{-1}(0) \cap S(f_t) = 0$
i.e. in the case of isolated singularities,
a finiteness theorem holds on Blow-Nash triviality
(Theorem II).

After this, we assume that $\dim f_t^{-1}(0)$ is constant
and $\dim f_t^{-1}(0) \cap S(f_t) \ge 1$ over $J$.

\vspace{3mm}

\noindent {\bf Process II. Finiteness on the existence
of Nash trivial simultaneous resolution.}
By Theorem II, there exists a finite partition of
$J = Q_1 \cup \cdots \cup Q_u$ such that for each $i$,

(1) $Q_i$ is a Nash open simplex, and

(2) there is a Nash trivial simultaneous resolution
$\Pi_i : \mathcal{M}_i \to N \times Q_i$ of $F_{Q_i}^{-1}(0)$
in $N \times Q_i$ over $Q_i$.

In order to show this finiteness theorem,
we used the desingularisation theorem of Hironaka or
Bierstone-Milman (Theorem \ref{desingularisation}).
Therefore, for each $i$
\begin{equation}\label{III-1}
\Pi_i(S\Pi_i) \subset Sing F_{Q_i}^{-1}(0).
\end{equation}

Let $q : N \times Q_i \to Q_i$ be the canonical projection.
For $t \in Q_i$, we set $M_t = \mathcal{M}_i \cap (q \circ \Pi_i)^{-1}(t)$
and $N_t = N \times \{ t \}$.
Define a mapping $\pi_t : M_t \to N_t$ by
$\pi_t =\Pi_i|_{M_t} : M_t \to N_t$.
Then it follows from \eqref{III-1} and the Nash triviality that

\vspace{3mm}

\centerline{$\pi_t(S\pi_t) \subset Sing f_t^{-1}(0)$
\ \ for any $t \in Q_i$.}

\vspace{3mm}

\noindent Hence $\dim \pi_t(S\pi_t) \le \dim Sing f_t^{-1}(0)
< \dim f_t^{-1}(0) = K_i$ (constant), namely,

\vspace{3mm}

\centerline{$\dim \pi_t(S\pi_t) \le K_i - 1$ \ \  for any $t \in Q_i$.}

\vspace{3mm}

\noindent {\bf Process III. Fukuda's lemma on a stratified mapping.}
By T. Fukuda \cite{fukuda1}, any Nash mapping $f : M \to N$
between Nash manifolds can be stratified.

\begin{lem}\label{fukuda}
Given semialgebraic subsets $A_1, \cdots , A_a$ of $M$ 
and semialgebraic subsets $B_1, \cdots , B_b$ of $N$,
there exist finite $C^{\omega}$-Nash Whitney stratifications
$\mathcal{S}(M)$ of $M$ compatible with $A_1, \cdots , A_a$
and $\mathcal{S}(N)$ of $N$ compatible with $B_1, \cdots , B_b$
such that for any $X \in \mathcal{S}(M)$, there is a stratum
$U \in \mathcal{S}(N)$ such that the restriction
$f|_X : X \to U$ is a Nash submersion.
\end{lem}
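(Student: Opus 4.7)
The plan is to construct the two stratifications jointly by downward induction on dimension, using standard semialgebraic machinery (Tarski--Seidenberg to keep constructed sets semialgebraic, and Lojasiewicz's triangulation Theorem \ref{lojasiewicz} together with Remark \ref{remark11} to produce Nash-manifold strata with Whitney $(b)$-regularity after each refinement step).

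First I would apply Theorem \ref{lojasiewicz} to produce an initial finite $C^\omega$-Nash Whitney stratification $\mathcal{S}_0(N)$ of $N$ compatible with $B_1,\ldots,B_b$, and a finite $C^\omega$-Nash Whitney stratification $\mathcal{S}_0(M)$ compatible with the sets $A_1,\ldots,A_a$ and, in addition, with each preimage $f^{-1}(V)$ for $V\in\mathcal{S}_0(N)$. These preimages are semialgebraic by Theorem \ref{tarski-seidenberg}, so this initial compatible refinement is available. After this step every stratum $X\in\mathcal{S}_0(M)$ already maps into a unique stratum $V\in\mathcal{S}_0(N)$, but $f|_X$ need not be a submersion.

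Next, working by downward induction on $\dim V$, for each top-dimensional stratum $V$ and each $X$ with $f(X)\subset V$ consider the bad locus
\[
B(V) \;=\; \bigcup_{\dim X<\dim V}\! f(X) \;\cup\; \bigcup_{\dim X\ge\dim V}\! f\bigl(\{x\in X : \mathrm{rank}\,d(f|_X)_x<\dim V\}\bigr).
\]
By Theorem \ref{tarski-seidenberg} this is a semialgebraic subset of $V$, and by Sard's theorem (applied in the Nash/semialgebraic setting) it has dimension strictly less than $\dim V$. Subdivide $V$ into $V\setminus B(V)$ and $B(V)$, absorbing $B(V)$ into the lower-dimensional skeleton of $\mathcal{S}_0(N)$, and correspondingly subdivide each $X\subset f^{-1}(V)$ along $f^{-1}(B(V))\cap X$. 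On $V\setminus B(V)$ the submersion condition now holds for every $X$ mapping into it, and the remaining work is concentrated on $B(V)$, which has strictly smaller dimension. After finitely many iterations the process terminates. At each step I would re-apply Theorem \ref{lojasiewicz} to the current semialgebraic decompositions to restore $C^\omega$-Nash manifold strata and Whitney $(b)$-regularity on the \emph{lower-dimensional} part only, leaving the already-cleaned top strata untouched.

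The main obstacle will be to ensure that the re-triangulation step needed to recover Whitney regularity on the newly-carved strata does not destroy the submersion property already achieved on higher-dimensional strata. This is managed by only re-triangulating the complement of the previously-finalised strata and by appealing to the fact that the submersion condition is open, so any sufficiently fine refinement inside the complement preserves it; combined with the genericity of Whitney regularity within each semialgebraic stratum, this yields the required joint stratification.
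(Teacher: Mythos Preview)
The paper does not give its own proof of this lemma: it is stated as Fukuda's result, attributed to \cite{fukuda1}, and used as a black box. So there is nothing to compare against at the level of argument.

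Your outline is the standard one (and is essentially how Fukuda and the subsequent stratification literature proceed): start from compatible partitions, push the non-submersive locus into a semialgebraic bad set of strictly smaller dimension via Tarski--Seidenberg plus semialgebraic Sard, and iterate by downward induction on the target dimension. Two small points are worth tightening. First, Theorem~\ref{lojasiewicz} as quoted here only produces Nash-manifold pieces from a simplicial decomposition; it does not by itself yield Whitney $(b)$-regularity. The extra input you need is that for a pair of semialgebraic Nash strata $X\supset\partial X\supset Y$ the set of points of $Y$ where $(b)$ fails is semialgebraic of dimension $<\dim Y$, so this bad set can be thrown into the lower skeleton at the same step where you refine by $B(V)$. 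Second, the concern you flag about re-triangulating without disturbing previously finalised top strata is handled in the standard argument not by freezing those strata, but by always refining \emph{downward}: once a stratum $V\setminus B(V)$ has been cleaned, any later refinement occurs only inside $\overline{V}\setminus V$ or inside $B(V)$, so the submersion property over $V\setminus B(V)$ is untouched, and Whitney regularity of $V\setminus B(V)$ over newly created lower strata is again arranged by removing a thin semialgebraic set from the lower stratum. With those two clarifications your sketch goes through and matches the argument in \cite{fukuda1}.
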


\begin{rem}\label{remark31}
 Taking substratifications if necessary, we may assume that
$f|_X : X \to U$ is surjective.
\end{rem}

Set $W_i = F_{Q_i}^{-1}(0)$.
Then it follows from Lemma \ref{fukuda} and Theorem \ref{lojasiewicz}
that taking a finite partition of $Q_i$ if necessary,
there are finite $C^{\omega}$-Nash Whitney stratifications
$\mathcal{S}(\mathcal{M}_i)$ of $\mathcal{M}_i$
compatible with $W_i^{\prime},\ \mathcal{D}_1, \cdots , \mathcal{D}_d$
and their intersections, 
and $\mathcal{S}(N \times Q_i)$ of $N \times Q_i$
compatible with their images by $\Pi_i$ so that
$\Pi_i : (\mathcal{M}_i,\mathcal{S}(\mathcal{M}_i)) \to 
(N \times Q_i,\mathcal{S}(N \times Q_i))$ and
$q : (N \times Q_i,\mathcal{S}(N \times Q_i)) \to
(Q_i,\{ Q_i \} )$ are stratified mappings.
Here $W_i^{\prime}$ is the strict transform of $W_i$ by $\Pi_i$
and $\mathcal{D}_1, \cdots , \mathcal{D}_d$ are the exceptional divisors.

Set

\vspace{3mm}

\centerline{$\mathcal{S}(\mathcal{D}_1 \cup \cdots \cup \mathcal{D}_d)
= \{ X \in \mathcal{S}(\mathcal{M}_i)\ |\
X \subset \mathcal{D}_1 \cup \cdots \cup \mathcal{D}_d \}$ ,}

\vspace{2mm}

\centerline{$\mathcal{S}(\Pi_i(\mathcal{D}_1 \cup \cdots \cup \mathcal{D}_d))
= \{ U \in \mathcal{S}(N \times Q_i)\ |\
U \subset \Pi_i(\mathcal{D}_1 \cup \cdots \cup \mathcal{D}_d) \}$ .}

\vspace{3mm}

\noindent We can assume that any stratum in 
$\mathcal{S}(\mathcal{M}_i)$ and $\mathcal{S}(N \times Q_i)$
is connected.

\vspace{3mm}

\noindent {\bf Process IV. Semialgebraic triviality of 
$\Pi_i|_{\mathcal{D}_1 \cup \cdots \cup \mathcal{D}_d}$.}
We first recall the semialgebraic version of Thom's 2nd Isotopy Lemma
proved by M. Shiota \cite{shiota2}.

Let $A \subset \R^m,\ B \subset \R^r$
be semialgebraic sets, let $I$ be a $C^2$-Nash open simplex,
and let $f : A \to B$ and $q : B \to I$ be proper $C^2$-Nash mappings.

\begin{lem}\label{2ndisotopy}
Suppose that $A$ and $B$ admit finite $C^2$-Nash Whitney stratifications
$\mathcal{S}(A)$ and $\mathcal{S}(B)$ respectively such that
$f : (A,\mathcal{S}(A)) \to (B,\mathcal{S}(B))$ is a Thom mapping
and $q : (B,\mathcal{S}(B)) \to (I,\{ I \} )$ is a stratified mapping.
Then the stratified mapping $f$ is 
semialgebraically trivial over $I$,
namely, there are semialgebraic homeomorphisms
$H : (q \circ f)^{-1}(P_0) \times I \to A$ and
$h : q^{-1}(P_0) \times I \to B$, for some $P_0 \in I$,
preserving the natural stratifications such that
$h^{-1} \circ f \circ H = f|_{(q \circ f)^{-1}(P_0)} \times id_{I}$
and $q \circ h : q^{-1}(P_0) \times I \to I$ 
is the canonical projection.
\end{lem}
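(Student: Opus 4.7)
The plan is to adapt the classical Thom--Mather proof of the 2nd Isotopy Lemma to the semialgebraic category, proceeding in two stages corresponding to the two-level stratified structure.

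First I would trivialise $q$ downstairs. Since $q \circ f$ and $q$ are proper, $q:(B,\mathcal{S}(B))\to (I,\{I\})$ is a proper stratified submersion, and $I$ is a Nash open simplex (hence semialgebraically contractible). The semialgebraic version of Thom's 1st Isotopy Lemma (Shiota~\cite{shiota2}) therefore produces, for some $P_0\in I$, a semialgebraic homeomorphism $h:q^{-1}(P_0)\times I\to B$ preserving $\mathcal{S}(B)$ and commuting with the projection to $I$. Concretely, this comes from integrating a controlled semialgebraic vector field on $B$ that lifts $\partial/\partial t$ on $I$, using a system of semialgebraic tubular neighbourhoods around the strata of $\mathcal{S}(B)$.

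Next I would lift the trivialisation to $A$. Choose a semialgebraic control data system $\{(T_X,\pi_X,\rho_X)\}_{X\in\mathcal{S}(A)}$ (tubular neighbourhood, projection, distance function) compatible with the stratification, which exists in the semialgebraic category by Shiota's results. The trivialising vector field $\eta$ on $B$ from the previous step decomposes stratum by stratum as $\eta|_Y$ for $Y\in\mathcal{S}(B)$. For each stratum $X\in\mathcal{S}(A)$ mapping to some $Y\in\mathcal{S}(B)$, lift $\eta|_Y$ through $f|_X$ to a vector field $\xi_X$ on $X$; the submersion property of $f|_X:X\to Y$ makes this possible semialgebraically. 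To glue the $\xi_X$'s into a single continuous controlled vector field on $A$, I would invoke the Thom $(a_f)$-regularity: for strata $X,X'\in \mathcal{S}(A)$ with $X\subset\overline{X'}$, $f(X)\subset Y$, $f(X')\subset Y'$, $(a_f)$-regularity guarantees that limits of tangent planes to fibres $(f|_{X'})^{-1}(y)$ contain the tangent planes to $(f|_X)^{-1}(y_0)$, which is exactly what is needed so that lifts $\xi_{X'}$ can be chosen to converge to $\xi_X$ along normal directions, i.e.\ to be controlled with respect to $(\pi_X,\rho_X)$.

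Finally, integrate. Properness of $f$ (hence of $q\circ f$) ensures the flow of the controlled vector field is defined for all time over $I$, yielding a continuous semialgebraic homeomorphism $H:(q\circ f)^{-1}(P_0)\times I\to A$ preserving $\mathcal{S}(A)$ and satisfying $h^{-1}\circ f\circ H = f|_{(q\circ f)^{-1}(P_0)}\times \mathrm{id}_I$. The main obstacle is the last step: flows of semialgebraic vector fields are generically \emph{not} semialgebraic, so one cannot simply integrate. This is handled by Shiota's semialgebraic approximation machinery (already used in the proof of Theorem~\ref{nashisotopy}): one constructs the trivialisation first in the $C^1$ category using standard Thom--Mather arguments, then approximates the control data and resulting homeomorphism by semialgebraic ones, exploiting the fact that the final object need only be a homeomorphism, not a diffeomorphism. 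A triangulation of $(A,B,I)$ compatible with both stratifications (Theorem~\ref{lojasiewicz} and Remark~\ref{remark11}) provides the combinatorial backbone on which the semialgebraic approximation can be carried out stratum by stratum and glued across incidences using the $(a_f)$-condition.
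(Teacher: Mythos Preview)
The paper does not give its own proof of this lemma: it is stated as a result of Shiota and attributed to \cite{shiota2} (``We first recall the semialgebraic version of Thom's 2nd Isotopy Lemma proved by M.~Shiota''). So there is nothing in the paper to compare your argument against beyond the citation.

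Your outline is the standard Thom--Mather strategy, and you correctly isolate the genuine obstruction: integrating controlled semialgebraic vector fields does not produce semialgebraic flows. Where your sketch is weakest is the proposed fix. You suggest building a $C^1$ trivialisation by integration and then approximating it semialgebraically, but a generic semialgebraic approximation of a homeomorphism $H$ will not preserve the commutation $h^{-1}\circ f\circ H = f|_{\text{fibre}}\times \mathrm{id}_I$ nor the stratification compatibility; these are rigid constraints that an $\epsilon$-perturbation typically destroys. Shiota's actual argument in \cite{shiota2} does not proceed by ``integrate then approximate''; he builds the trivialisation directly within the semialgebraic category via semialgebraic tube systems and an inductive cell-by-cell construction that avoids integration altogether. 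If you want to make your sketch into a proof, you would need either to invoke Shiota's construction as a black box (which is what the paper does) or to spell out a constrained approximation that respects $f$, $q$, and both stratifications simultaneously---and that is essentially the content of Shiota's theorem, not a routine step.
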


\begin{rem}\label{remark32} (1) The Whitney regularity of 
adjacent strata of $\mathcal{S}(A)$ mapped into different strata 
of $\mathcal{S}(B)$ is not necessary to show the triviality
of $f$ over $I$.
In \cite{sabbah2}, C. Sabbah calls a stratified mapping 
$(f,\mathcal{S} = \{ \mathcal{S}(A),\mathcal{S}(B) \} )$
{\em sans \'eclatement} if $(f,\mathcal{S})$ satisfies
the assumptions of Thom's 2nd Isotopy Lemma
without the Whitney regularity of such adjacent strata.

(2) It is well-known that in Thom's 2nd Isotopy Lemma
we can weaken the assumptions of the Whitney regularity 
and the Thom regularity more.
The Lemma is shown under the assumption
on the existence of controlled $C^2$ tube systems 
with some geometric contents.
For instance, see Complement of Theorem II.$6.1^{\prime}$
in \cite{shiota2}.
\end{rem}

Let $A$, $B$, $I$, $f : A \to B$ and $q : B \to I$ 
be the same as above.
Assume that $A$ and $B$ admit finite $C^2$-Nash Whitney stratifications 
$\mathcal{S}(A)$ and $\mathcal{S}(B)$, respectively.
Assume further that $f : (A,\mathcal{S}(A)) \to (B,\mathcal{S}(B))$ 
and $q : (B,\mathcal{S}(B)) \to (I,\{ I \} )$
are stratified mappings.
For $t \in I$, let $A_t$ and $B_t$ denote
$(q \circ f)^{-1}(t) \cap A$ and $q^{-1}(t) \cap B$ 
respectively, and let

\vspace{3mm}

\qquad $\mathcal{S}(A)_t = (q \circ f)^{-1}(t) \cap \mathcal{S}(A)$
and $\mathcal{S}(B)_t = q^{-1}(t) \cap \mathcal{S}(B).$

\vspace{3mm}

\noindent Note that for any $t \in I$, $\mathcal{S}(A)_t$ and 
$\mathcal{S}(B)_t$ are stratifications of $A_t$ and $B_t$ respectively,
and $f_t = f|_{A_t} : (A_t,\mathcal{S}(A)_t) \to (B_t,\mathcal{S}(B)_t)$ 
is a stratified mapping .
Then we can show the following modified Thom's 2nd Isotopy Lemma 
in the semialgebraic category using the arguments 
based on the proof of the above 2nd Isotopy Lemma.

\begin{lem}\label{weakisotopy}(\cite{koike1})
Suppose that for any $t \in I$, the stratified mapping
$f_t : (A_t,\mathcal{S}(A)_t) \to (B_t,\mathcal{S}(B)_t)$ 
is $(a_{f_t})$-regular.
Then, subdividing $I$ into finitely many $C^2$-Nash manifolds
if necessary, the stratified mapping
$f : (A,\mathcal{S}(A)) \to (B,\mathcal{S}(B))$ 
is semialgebraically trivial over $I$.
\end{lem}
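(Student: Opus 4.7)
The plan is to reduce to the standard semialgebraic Thom 2nd Isotopy Lemma (Lemma \ref{2ndisotopy}) on each piece of a finite semialgebraic subdivision of $I$, adapting Sabbah's arguments for stratified mappings sans \'eclatement from \cite{sabbah2} (compare Remark \ref{remark32}(1)).

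First I would preprocess the data. Apply Hardt's Semialgebraic Triviality Theorem (Theorem \ref{hardt}) to the proper mapping $q \circ f : A \to I$ together with the strata of $\mathcal{S}(A)$ viewed as semialgebraic subsets, and simultaneously to $q : B \to I$ together with the strata of $\mathcal{S}(B)$. This yields a finite semialgebraic partition of $I$ over which both fibrations are semialgebraically trivial, compatibly with every stratum and every intersection pattern. By Lojasiewicz's Triangulation Theorem (Theorem \ref{lojasiewicz}) together with Remark \ref{remark11} and a Shiota-type Nash smoothing, I would refine this partition to a finite collection of $C^2$-Nash open simplices. Working over one such simplex $I'$, I may thus assume the combinatorial type of the two stratifications is constant in $t \in I'$, and it suffices to produce a semialgebraic triviality of $f$ over $I'$.

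The core construction is a controlled $C^2$-Nash tube system $\{(|T_X|, \pi_X, \rho_X)\}_{X \in \mathcal{S}(A)}$ for $\mathcal{S}(A)$ compatible with $f$, enabling a three-stage lifting of vector fields from $I'$ through $B$ to $A$. For pairs $X, Y \in \mathcal{S}(A)$ with $\overline{X} \supset Y$ that are mapped by $f$ into the same stratum of $\mathcal{S}(B)$, Whitney $(b)$-regularity of $\mathcal{S}(A)$ combined with the stratified-submersion hypothesis on $f$ gives the usual Thom control. For pairs mapped into different strata of $\mathcal{S}(B)$, the global Thom $(a_f)$-regularity is not available; here I follow Sabbah and construct tube systems on $B$ first, using stratified properness of $q$, and then pull them back through $f$, invoking the fiberwise $(a_{f_t})$-regularity to obtain the compatibility required between the tubes for $X$ and $Y$. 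Given this controlled tube system, any $C^1$-Nash vector field on $I'$ lifts through $q$ to a stratified vector field on $q^{-1}(I')$ by the standard Thom 1st Isotopy construction, and then through $f$ to a stratified vector field on $(q \circ f)^{-1}(I')$ which $f$-projects to the first lift. Properness of $q$ and $q \circ f$ ensures completeness of the flows, whose integration yields stratification-preserving semialgebraic homeomorphisms $H$ and $h$ with the required properties.

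The main obstacle is establishing the pulled-back compatibility in the second case above with only fiberwise Thom regularity at our disposal. The essential point, provided by $(a_{f_t})$-regularity of each $f_t$, is that the limiting kernels of $df_t$ along sequences in $X \cap A_t$ approaching $Y \cap A_t$ behave well within each fiber. A semicontinuity and compactness argument on the semialgebraic parameter $t$—carried out after a further finite Nash subdivision of $I'$, legitimate by Theorem \ref{lojasiewicz} and Remark \ref{remark11}—converts this fiberwise control into uniform control in $t$, which is exactly what Sabbah's variant of the 2nd Isotopy Lemma requires in order to suppress the missing Whitney/Thom conditions between strata mapped to distinct target strata. Assembling the semialgebraic trivializations obtained on each Nash sub-simplex completes the proof.
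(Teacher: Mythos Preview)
Your approach is essentially the one the paper indicates: the paper does not prove this lemma in detail but cites \cite{koike1} and says it follows ``using the arguments based on the proof of the above 2nd Isotopy Lemma,'' i.e., by modifying the controlled tube-system and vector-field-lifting construction behind Lemma~\ref{2ndisotopy}. Your plan to build $q$-compatible tubes on $B$, pull back through $f$, and use Whitney regularity for pairs over the same target stratum versus fiberwise Thom regularity for pairs over different target strata (cf.\ Remark~\ref{remark32}(1) and Sabbah's sans \'eclatement framework) is precisely this.

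One point deserves sharpening. The phrase ``a semicontinuity and compactness argument on the semialgebraic parameter $t$ \dots\ converts this fiberwise control into uniform control in $t$'' suggests you are trying to upgrade fiberwise $(a_{f_t})$-regularity to global $(a_f)$-regularity; that implication is false in general and is not what the argument actually does. The mechanism is structural rather than limiting: since $q$ is a stratified submersion with Whitney stratifications, the tube retractions $\pi_V$ on $B$ can be chosen $q$-compatible, hence fiber-preserving in $t$. The commutation requirement $f\circ\pi_Y=\pi_V\circ f$ then forces $\pi_Y$ to preserve each $A_t$ as well, so the transversality needed to construct $\pi_Y$ on $X$ near $Y$ reduces to a condition inside each fiber $A_t$---and that is exactly the hypothesis $(a_{f_t})$. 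The global Whitney regularity of $\mathcal{S}(A)$ (together with the fact that $q\circ f$ is a stratified submersion) is what guarantees the construction varies well across fibers and that the resulting controlled vector fields integrate to semialgebraic homeomorphisms. The Hardt preprocessing you propose is harmless but not essential; the finite subdivision of $I$ arises naturally from the semialgebraic nature of the loci where the tube construction may degenerate.
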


\noindent Throughout this paper, we use the notations
$A_t$, $B_t$, $\mathcal{S}(A)_t$, $\mathcal{S}(B)_t$
and $f_t : (A_t,\mathcal{S}(A)_t) \to (B_t,\mathcal{S}(B)_t)$
in the above meaning for stratified mappings 
$f : (A,\mathcal{S}(A)) \to (B,\mathcal{S}(B))$ 
and $q : (B,\mathcal{S}(B)) \to (I,\{ I \} )$.

Let us go back to our process.
>From Process IV to Process VI, we assume that $N$ is compact
except the statements concerning Isotopy Lemmas.
Let $\Pi_i : (\mathcal{M}_i,\mathcal{S}(\mathcal{M}_i)) \to
(N \times Q_i,\mathcal{S}(N \times Q_i))$
be a stratified mapping given in Process III.
We consider the restriction of the stratified mapping $\Pi_i$ to
$\mathcal{D}_1 \cup \cdots \cup \mathcal{D}_d$
as the above $f$.

\vspace{2mm}

{\em IV-A.} We first consider the case where the following
assumption is satisfied for $Q_i$.

\vspace{2mm}

\noindent {\bf Assumption A.} {\em For any} $t \in Q_i$,
{\em the stratified mapping}
$$
\pi_t|_{(\mathcal{D}_1 \cup \cdots \cup \mathcal{D}_d)_t} :
((\mathcal{D}_1 \cup \cdots \cup \mathcal{D}_d)_t,
\mathcal{S}(\mathcal{D}_1 \cup \cdots \cup \mathcal{D}_d)_t) \to
(\Pi_i(\mathcal{D}_1 \cup \cdots \cup \mathcal{D}_d)_t,
\mathcal{S}(\Pi_i(\mathcal{D}_1 \cup \cdots \cup \mathcal{D}_d))_t)
$$
{\em is} ($a_{\pi_t}$)-{\em regular.}

\vspace{3mm}

By the compactness of $N$, it follows from Lemma \ref{weakisotopy}
that taking a finite partition 
of $Q_i$ if necessary, the stratified mapping
$$
\Pi_i|_{\mathcal{D}_1 \cup \cdots \cup \mathcal{D}_d} :
(\mathcal{D}_1 \cup \cdots \cup \mathcal{D}_d,
\mathcal{S}(\mathcal{D}_1 \cup \cdots \cup \mathcal{D}_d)) \to
(\Pi_i(\mathcal{D}_1 \cup \cdots \cup \mathcal{D}_d),
\mathcal{S}(\Pi_i(\mathcal{D}_1 \cup \cdots \cup \mathcal{D}_d)))$$
is semialgebraically trivial over $Q_i$.

\vspace{3mm}

\noindent {\em IV-B.} We next consider the case where
the ($a_{\pi_t}$)-regularity is not satisfied for $Q_i$.
Then we pose the following:

\vspace{3mm}

\noindent {\bf Assumption B.} {\em There exist a thin semialgebraic
subset $R_i$ of $Q_i$ and a finite partition
$Q_i \setminus R_i = Q_{i,1} \cup \cdots \cup Q_{i,v}$
such that each $Q_{i,j}$ is a Nash open simplex, 
and the stratified mapping}
$$
\Pi_i|_{\mathcal{D}_1 \cup \cdots \cup \mathcal{D}_d} :
(\mathcal{D}_1 \cup \cdots \cup \mathcal{D}_d,
\mathcal{S}(\mathcal{D}_1 \cup \cdots \cup \mathcal{D}_d))
\to (\Pi_i(\mathcal{D}_1 \cup \cdots \cup \mathcal{D}_d),
\mathcal{S}(\Pi_i(\mathcal{D}_1 \cup \cdots \cup \mathcal{D}_d)))
$$
{\em is semialgebraically trivial over} $Q_{i,j}$, $1 \le j \le v$.

\vspace{3mm}

\noindent {\bf Process V. Semialgebraic triviality of $\Pi_i$.}
We recall the semialgebraic version of Thom's 1st Isotopy Lemma.

\begin{lem}\label{1stisotopy} (\cite{shiota2})
Let $A$ be a semialgebraic subset of $\R^m$,
let $I$ be a $C^2$-Nash open simplex,
and let $f : A \to I$ be a proper $C^2$-Nash mapping.
Suppose that $A$ admits a finite $C^2$-Nash Whitney stratification
$\mathcal{S}(A)$ such that for any stratum $X \in \mathcal{S}(A),\
f|_X : X \to I$ is a $C^2$ submersion onto $I$.
Then the stratified set $(A,\mathcal{S}(A))$ is
semialgebraically trivial over $I$.
\end{lem}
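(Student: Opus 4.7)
The plan is to follow Mather's classical proof of the smooth Thom's 1st Isotopy Lemma, but carried out entirely inside the semialgebraic (and $C^2$-Nash where possible) category, and then to semialgebraize the resulting flow by combining Shiota's Nash approximation theorem with the Semialgebraic Triangulation Theorem (Theorem~\ref{lojasiewicz}, with Remark~\ref{remark11} to drop boundedness). Since $I$ is a $C^2$-Nash open simplex, it is $C^2$-Nash diffeomorphic to $\R^p$, so I may assume $I=\R^p$. I would argue by induction on $p$: once the case $p=1$ is settled, the case of general $p$ follows by iteratively trivialising along the coordinate directions of $\R^p$, using properness at each step to ensure that the intermediate stratified sets remain amenable to the same argument.

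Next I would construct a semialgebraic controlled tube system $\{(T_X,\pi_X,\rho_X)\}_{X\in\mathcal{S}(A)}$ in Mather's sense: each $T_X$ is a semialgebraic tubular neighbourhood of $X$ in $A$, $\pi_X:T_X\to X$ is a semialgebraic retraction (submersive on each stratum $Y$ with $\overline Y\supset X$), $\rho_X:T_X\to[0,\varepsilon)$ is a semialgebraic distance-like function with $\rho_X^{-1}(0)=X$, and the commutation relations $\pi_X\circ\pi_Y=\pi_X$, $\rho_X\circ\pi_Y=\rho_X$ hold on the overlap. Existence of such a tube system in the semialgebraic category follows from the Whitney ($b$)-regularity combined with Nash retraction theorems, after possibly refining the stratification. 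Because each $f|_X$ is a $C^2$ submersion onto $I$, I can lift the standard vector field $\partial/\partial t_j$ on $I$ to a stratumwise $C^2$ vector field $v_j^X$ on $X$ with $df(v_j^X)=\partial/\partial t_j$, and then glue these lifts into a globally continuous, stratified vector field $v_j$ on $A$ that is tangent to each stratum and compatible with the tube system (control conditions $\pi_X{}_{*}v_j=v_j$ and $d\rho_X(v_j)=0$ near $X$). The Whitney ($b$)-regularity guarantees that such a controlled lift exists.

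Properness of $f$ together with the control conditions ensures that the flow of $v_j$ is globally defined and preserves the stratification, and that $f$ is constant along integral curves of $v_j-\partial/\partial t_j$'s trivialising direction; integrating the family $\{v_1,\dots,v_p\}$ jointly therefore produces a stratum-preserving homeomorphism
$$
H:f^{-1}(P_0)\times I \longrightarrow A,\qquad f\circ H = \mathrm{pr}_I,
$$
for any base point $P_0\in I$. This $H$ is \emph{a priori} only continuous, since the integration step leaves the Nash category. To promote $H$ to a semialgebraic homeomorphism I would invoke Shiota's Nash approximation theorem on the vector fields $v_j$ (approximating them stratumwise by semialgebraic vector fields while preserving the control conditions up to a small perturbation), and then apply the Semialgebraic Triangulation Theorem to $A$ compatibly with $\mathcal{S}(A)$, $f$, and the tubes $T_X$; once everything is simplicial and the vector fields are semialgebraic and controlled, the resulting flow is semialgebraic by Tarski--Seidenberg (Theorem~\ref{tarski-seidenberg}), giving the desired semialgebraic trivialisation.

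The main obstacle is the last step: ordinary ODE integration of a semialgebraic vector field almost never yields a semialgebraic flow, so one cannot simply integrate and appeal to Tarski--Seidenberg. This is precisely why one needs the combination of (i) a \emph{controlled} tube system, so that the trivialisation is forced to be piecewise of a very rigid shape dictated by the tubes rather than by the full flow, and (ii) a compatible semialgebraic triangulation, on each closed simplex of which the controlled flow reduces to a semialgebraic product structure that can be built by hand. Making these two ingredients interact coherently across strata of different dimensions — so that the piecewise semialgebraic trivialisations glue into a single semialgebraic homeomorphism preserving $\mathcal{S}(A)$ — is the technical heart of the proof, and is carried out in detail in~\cite{shiota2}.
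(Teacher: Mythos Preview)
The paper does not prove this lemma at all: it is simply quoted from Shiota~\cite{shiota2} as a known result, so there is no in-paper argument to compare against. Your outline correctly identifies the classical Mather framework (controlled tube systems, stratumwise lifts of the coordinate fields, integration) and, crucially, correctly names the real difficulty --- that integrating a semialgebraic vector field does not produce a semialgebraic flow.

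That said, your middle paragraph contains a genuine confusion. You write that after approximating the $v_j$ by semialgebraic vector fields and triangulating, ``the resulting flow is semialgebraic by Tarski--Seidenberg.'' This is false, and you yourself contradict it two sentences later. Tarski--Seidenberg says nothing about solutions of ODEs: the flow of a polynomial vector field on $\R$ is already transcendental in general. Shiota's actual argument in~\cite{shiota2} does \emph{not} semialgebraize the flow; rather, it abandons integration altogether on the semialgebraic side and builds the trivialising homeomorphism cell-by-cell over a compatible semialgebraic triangulation, using the tube data only to guide how the pieces are glued. Your final paragraph gestures at this (``piecewise of a very rigid shape dictated by the tubes \ldots\ built by hand''), which is the correct picture, but the preceding paragraph should be deleted rather than presented as a step of the proof. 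In short: the skeleton and the diagnosis of the obstacle are right; the proposed mechanism for overcoming the obstacle (approximation $+$ Tarski--Seidenberg on the flow) is not.
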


Let us consider the situation that by Process IV we can have  
a semialgebraic trivialisation of the stratified mapping
$\Pi_i|_{\mathcal{D}_1 \cup \cdots \cup \mathcal{D}_d}$
over $Q_i$ or $Q_{i,j}$.
Here we explain only how we obtain the semialgebraic triviality
of $\Pi_i$ over $Q_i$, since the argument for $Q_{i,j}$ is the same.
The semialgebraic trivialisation of
$\Pi_i|_{\mathcal{D}_1 \cup \cdots \cup \mathcal{D}_d}$ over $Q_i$
gives a semialgebraic one of the stratified mapping
$$
q \circ \Pi_i|_{\mathcal{D}_1 \cup \cdots \cup \mathcal{D}_d} :
(\mathcal{D}_1 \cup \cdots \cup \mathcal{D}_d,
\mathcal{S}(\mathcal{D}_1 \cup \cdots \cup \mathcal{D}_d))
\to (Q_i,\{ Q_i \} ).
$$
This stratified mapping is the restriction to 
$\mathcal{D}_1 \cup \cdots \cup \mathcal{D}_d$
of the stratified mapping
$$
q \circ \Pi_i : (\mathcal{M}_i,\mathcal{S}(\mathcal{M}_i)) 
\to (Q_i,\{ Q_i \} ).
$$

The stratified mapping $q \circ \Pi_i$ 
satisfies the hypotheses of lemma \ref{1stisotopy}.
Therefore there is a semialgebraic trivialisation of
$(\mathcal{M}_i,\mathcal{S}(\mathcal{M}_i))$ over $Q_i$
extending the above trivialisation of
$(\mathcal{D}_1 \cup \cdots \cup \mathcal{D}_d,
\mathcal{S}(\mathcal{D}_1 \cup \cdots \cup \mathcal{D}_d))$
over $Q_i$.
Since $\Pi_i$ is a Nash isomorphism outside
$\mathcal{D}_1 \cup \cdots \cup \mathcal{D}_d$,
the extended semialgebraic trivialisation induces
a semialgebraic one of the stratified mapping
$$
\Pi_i : (\mathcal{M}_i,\mathcal{S}(\mathcal{M}_i)) \to
(N \times Q_i,\mathcal{S}(N \times Q_i))$$
over $Q_i$.
In this way, we obtain a $\Pi_i$-Blow semialgebraic trivialisation 
of $(N \times Q_i,F_{Q_i}^{-1}(0))$ along $Q_i$.

\vspace{3mm}

\noindent {\bf Process VI. Finiteness on Blow-semialgebraic triviality.}
If Assumption A is satisfied over any $Q_i$ in Process IV,
then finiteness on Blow-semialgebraic triviality for 
$\{ (N,f_t^{-1}(0)) \}_{t \in J}$ follows from Process V.

In the case where not Assumption A but Assumption B is satisfied over
some $Q_i$, let $R_i$ be the thin semialgebraic subset of $Q_i$
removed in Process IV.
We take a finite subdivision of $R_i$ into Nash open simplices $R_{i,j}$'s.
By Process II, taking a finite subdivision of $R_{i,j}$ if necessary,
there is a Nash trivial simultaneous resolution
$\Pi_{i,j} : \mathcal{M}_{i,j} \to N \times R_{i,j}$
of $F_{R_{i,j}}^{-1}(0)$ in $N \times R_{i,j}$.
In addition, we may assume that $\Pi_{i,j}$ and $q$ over
each $R_{i,j}$  have the same properties as $\Pi_i$ and $q$
in Process III, respectively.
Therefore we can advance to Process IV in our programme
with $\{ (N,f_t^{-1}(0)) \}_{t \in R_{i,j}}$.
Since the dimension of each $R_{i,j}$
is less than that of $Q_i$,
we can show finiteness on Blow-semialgebraic triviality
for $\{ (N,f_t^{-1}(0)) \}_{t \in J}$,
if Assumption A or B is always satisfied in our
circulatory programme.

By the arguments mentioned above, we can terminate our programme
in the compact case.

\begin{example}\label{example2}(Theorem IIIa in \cite{koike1})
Let $N$ be a compact Nash manifold of dimension $3$,
and let J be a semialgebraic set in some Euclidean space.
Let $f_t : N \to \R^k$, $t \in J$, be a Nash mapping
such that $\dim f_t^{-1}(0) = 2$ over $J$.
Assume that $F$ is a Nash mapping.
Then we put the programme from Process I to VI into practice
for the family of Nash sets $(N \times J, F^{-1}(0))$.
In Process IV, Assumption A is satisfied.
See \cite{koike1} for the details.
Therefore finiteness on Blow-semialgebraic triviality
for $(N \times J, F^{-1}(0))$ follows from our programme.
\end{example}

\noindent {\bf Process VII. Reduction from the non-compact algebraic case 
to the algebraic compact case.} 
In this process, let $N$ be a nonsingular (affine) algebraic variety
in $\R^m$, let $J$ be a Nash open simplex in $\R^a$,
and let $F : N \times J \to \R^k$ be a Nash mapping.
Assume that $N$ is non-compact.

We consider the projectivisation of $N$ in $\R P^m$,
denoted by $\hat{N}$.
Let us regard $\R^m \subset \R P^m$.
Then $\hat{N}$ may be singular at some points in $\hat{N} \setminus N$.
By the desingularisation theorem of Hironaka,
there is an algebraic desingularisation $\gamma : \mathcal{N} \to \hat{N}$
of $\hat{N}$ whose exceptional set is mapped to $\hat{N} \setminus N$.
Let $V = F^{-1}(0)$ and $\hat{V}$ the Nash closure of $V$ in $\R P^m \times J$.
Note that $\hat{V} \subset \hat{N} \times J$ and $\hat{V} \cap \R^m \times J = V$.
We define a map $\beta : \mathcal{M} \to \hat{N} \times J$ by
$\beta (x;t) = (\gamma (x), t)$, where $\mathcal{M} = \mathcal{N} \times J$.
By construction, $\beta |_{\beta^{-1}(N \times J)} : \beta^{-1}(N \times J) 
\to N \times J$ is a Nash isomorphism.

We denote by $W$ the strict transform of $\hat{V}$ by $\beta$.
Let us consider $W$ as a Nash family of Nash sets
defined over a compact, nonsingular algebraic variety $\mathcal{N}$,
and let $q : \mathcal{M} = \mathcal{N} \times J \to J$ be the
canonical projection.
Then we can apply our programme, Processes I - VI, to this 
$(\mathcal{M},W)$.
If Assumption A or B is always satisfied at Process IV in the circulatory 
programme, there is a finite partition $J$ into
Nash open simplices $Q_i$'s such that $(\mathcal{N} \times Q_i,W_i)$ 
admits a $\Pi_i$-Blow-semialgebraic trivialisation along $Q_i$,
where $W_i = W \cap q^{-1}(Q_i)$ and $\Pi_i : \tilde{\mathcal{M}}_i \to
\mathcal{N} \times Q_i$ is a Nash trivial simultaneous resolution
of $W_i$ in $\mathcal{N} \times Q_i$. 
Let $\tilde{\Pi}_i$ be the restriction of $\beta \circ \Pi_i$ 
to $(\beta \circ \Pi_i)^{-1}(N \times Q_i)$.
At each Process III in the circulatory programme,
we can take the stratifications of $\tilde{\mathcal{M}}_i$ and
$\mathcal{N} \times Q_i$ so that the former stratification is compatible
with $\beta^{-1}(N \times Q_i)$ and 
$\beta^{-1}(\hat{V} \cap N \times Q_i)$, 
and the latter one is compatible with $N \times Q_i$
and $\hat{V} \cap N \times Q_i$.
We are regarding $F_{Q_i}^{-1}(0) = V \cap N \times Q_i$ 
as $\hat{V} \cap N \times Q_i$.
Therefore, by construction, the $\Pi_i$-Blow-semialgebraic trivialisation
of $(\mathcal{N} \times Q_i,W_i)$ induces a
$\tilde{\Pi}_i$-Blow-semialgebraic trivialisation of 
$(N \times Q_i,F_{Q_i}^{-1}(0))$ along $Q_i$.
In this way, we can reduce the algebraic non-compact case
to the compact case.

\vspace{3mm}

\noindent {\bf Process VIII. Reduction from the Nash case to
the nonsingular algebraic case.}
Let $N$ be a Nash manifold, and let $J$ be a Nash open simplex.
Let $F : N \times J \to \R^k$ be a Nash mapping.
Since every Nash manifold is Nash diffeomorphic to a nonsingular
(affine) algebraic variety (M. Shiota \cite{shiota1}),
we may assume that $N$ is a nonsingular algebraic variety.
Therefore we can reduce this Nash case to the above algebraic case. 

\vspace{3mm}

In Example \ref{example2}, let $N$ be a non-compact Nash manifold.
Then we can apply Processes VII and VIII to this case
to show a similar finiteness result to the example.
As a result, we get a finiteness theorem on Blow-semialgebraic triviality
for $(N \times J, F^{-1}(0))$ in the case where
$N$ is a Nash manifold which is not necessarily compact.

\vspace{3mm}

\begin{rem}\label{remark33}
In this section we have described our programme to show finiteness 
on Blow-semialgebraic triviality for a family of Nash sets as embedded
varieties, and in the next section we apply this programme
to show Theorems III, IV.
But in \S 5 we consider the Blow-semialgebraic triviality 
just for a family of the main parts of algebraic sets.
Therefore we have to modify the programme for the proof.
In Process IV we show semialgebraic triviality of
$\Pi_i|_{\mathcal{V}_i^{\prime} \cap (\mathcal{D}_1 \cup \cdots 
\cup \mathcal{D}_d})$ where $\mathcal{V}_i = F_{Q_i}^{-1}(0)$.
As a result, the semialgebraic triviality constructed in Process IV
is extended to $\mathcal{V}_i^{\prime}$ (not $\mathcal{M}_i$) in Process V,
using a similar argument to the above.
Because of some technical reason, Process VI is slightly
modified also in the proof.
In order to show the non-compact algebraic case, we consider
the complexification.
As a result, the argument in Process VIII is quite different
from this section (see \S 7 for the details).
\end{rem}


\bigskip
\section{Applications.}
\label{application}
\medskip

Let us give some applications of the programme
described in the previous section.
We make a remark on the programme that in the compact Nash case, 
Processes I-III, V always work,
and Process VI works if so does Process IV.
As seen in \S 3, when we consider our finiteness problem,
the non-compact Nash case follows from the compact one 
through Processes VII, VIII.

Let $N$ be a Nash manifold of dimension $n$,
and let $J$ be a semialgebraic set in some Euclidean space.
Let $f_t : N \to \R^k$ ($t \in J$) be a Nash mapping.
By Process I, we assume that $\dim f_t^{-1}(0)$
is constant over $J$ and
$\dim f_t^{-1}(0) \cap S(f_t) \ge 1$ for any $t \in J$.

\vspace{3mm}

\noindent {\bf Theorem III.} {\em Suppose that}
$\dim f_t^{-1}(0) = 2$ {\em over} $J$.
{\em Then there exists a finite partition} 
$J = Q_1 \cup \cdots \cup Q_u$ {\em such that for each} $i$,

(1) $Q_i$ {\em is a Nash open simplex, and}

(2) $(N \times Q_i,F_{Q_i}^{-1}(0))$ {\em admits a Blow-semialgebraic
trivialisation along} $Q_i$.

\begin{rem}\label{remark41} 
In this theorem we are considering the case $n \ge 3$.
The case $n = 3$ is discussed in \cite{koike1} and the
previous section (Processes VI - VIII).
\end{rem}

\begin{proof}[Proof of Theorem III] 
By our programme, it suffices to show the compact case.
Therefore we may assume that $N$ is compact. 
Let $\Pi_i : (\mathcal{M}_i,\mathcal{S}(\mathcal{M}_i)) \to
(N \times Q_i,\mathcal{S}(N \times Q_i))$ and
$q : (N \times Q_i,\mathcal{S}(N \times Q_i)) \to
(Q_i,\{ Q_i \} )$ be the stratified mappings given in Process III.
Then $\dim \pi_t(S\pi_t) \le 1$ for any $t \in Q_i$.

Let $X \in \mathcal{S}(\mathcal{D}_1 \cup \cdots \cup \mathcal{D}_d)$ and
$U \in \mathcal{S}(\Pi_i(\mathcal{D}_1 \cup \cdots \cup \mathcal{D}_d))$.
Since the restrictions $q \circ \Pi_i|_X : X \to Q_i$
and $q|_U : U \to Q_i$ are onto submersions,
$\dim X_t$ and $\dim U_t$ are constant over $Q_i$.
Then we have $0 \le \dim U_t \le 1$ for
$U \in \mathcal{S}(\Pi_i(\mathcal{D}_1 \cup \cdots \cup \mathcal{D}_d))$.
Let $X_t, \ Y_t \in 
\mathcal{S}(\mathcal{D}_1 \cup \cdots \cup \mathcal{D}_d)_t$
and $U_t, \ V_t \in
\mathcal{S}(\Pi_i(\mathcal{D}_1 \cup \cdots \cup \mathcal{D}_d))_t$
such that $\overline{X_t} \supset Y_t, \ \pi_t(X_t) = U_t$
and $\pi_t(Y_t) = V_t$.
In the case where $U_t = V_t$, the ($a_{\pi_t}$)-regularity
of the pair ($X_t,Y_t$) follows from the Whitney ($b$)-regularity.
Therefore it suffices to consider the ($a_{\pi_t}$)-regularity
in the case where $\dim U_t = 1$ and $\dim V_t = 0$.
Since $U_t$ and $V_t$ are connected, $V_t$ is a point and 
$U_t$ is Nash diffeomorphic to an open interval or a circle.
But in the case where $U_t$ is diffeomorphic to a circle,
$V_t$ cannot be contained in the closure of $U_t$.
Therefore $U_t$ is diffeomorphic to an open interval.
Taking substratifications of $\mathcal{S}(\mathcal{M}_i)$,
$\mathcal{S}(N \times Q_i)$ and a subtriangulation of
$\{ Q_i \} $ if necessary, we may assume that $\partial 
U_t$ consists of $V_t$ and another point.
Set

\vspace{3mm}

\centerline{$B(X_t,Y_t) = \{ y \in Y_t \ | \ (X_t,Y_t)$
is not $(a_{\pi_t})$-regular at $y$\} }

\vspace{3mm}

\noindent and $B(X,Y) = \bigcup_{t \in Q_i} B(X_t,Y_t)$.
By \cite{fukuda1}, $B(X,Y)$ is a semialgebraic subset
of $\mathcal{M}_i$.
Now we consider the straightening up of $V_t \cup U_t$.
Namely, there are a (semialgebraic) neighbourhood $W$
of $V_t$ in $N$ and a $C^1$ subanalytic diffeomorphism
$\phi : W \to \R^k$ such that $\phi(V_t) = 0$ and
$\phi(V_t \cup U_t) \subset \R \times \{ 0 \}$.
Then we have

\vspace{3mm}

\centerline{$B(X_t,Y_t) = \{ y \in Y_t \ | \ (X_t \cap \pi_t^{-1}(W),Y_t)$ 
is not $(a_{\phi \circ \pi})$-regular at $y$\} .}

\vspace{3mm}

\noindent Using a similar argument to K. Bekka \cite{bekka}
and K. Kurdyka - G. Raby \cite{kurdykaraby},
we see that $B(X_t,Y_t)$ is a thin subset of $Y_t$.
Therefore we can take subdivisions of
$\mathcal{S}(\mathcal{M}_i)$, $\mathcal{S}(N \times Q_i)$ and $\{ Q_i \}$
so that for any $X, \ Y \in 
\mathcal{S}(\mathcal{D}_1 \cup \cdots \cup \mathcal{D}_d)$
with $\overline{X} \supset Y$ and any $t \in Q_i$,
$(X_t,Y_t)$ is $(a_{\pi_t})$-regular.
Since Assumption A in Process IV is satisfied over any $Q_i$,
the statement of the theorem follows.
\end{proof}

In \cite{koike2} we gave a list on finiteness properties
for a family of zero-sets of Nash mappings defined over a compact
Nash manifold $N$ (or for a family of Nash set-germs).
By Theorems II and III, we have got some improvements in it.
We give the updated list below.
Note that in this case the Nash manifold $N$ is not necessarily compact.

\vspace{8mm}

\begin{center}
\setlength{\unitlength}{0.7mm}
\scriptsize
\begin{picture}(188,90)
\put(0,0){\line(1,0){188}}
\put(0,90){\line(1,0){188}}
\put(0,75){\line(1,0){188}}
\put(0,0){\line(0,1){90}}
\put(35,0){\line(0,1){90}}
\put(58,0){\line(0,1){90}}
\put(112,0){\line(0,1){90}}
\put(188,0){\line(0,1){90}}
\put(35,60){\line(1,0){130}}
\put(35,45){\line(1,0){23}}
\put(35,30){\line(1,0){23}}
\put(35,10){\line(1,0){23}}
\put(112,60){\line(1,0){76}}
\put(112,45){\line(1,0){76}}
\put(112,30){\line(1,0){76}}
\put(112,10){\line(1,0){76}}
\put(2,35)
{\begin{minipage}{20mm}
Existence of Nash trivial simultaneous resolution
\end{minipage}}
\put(36,80){\tiny $\dim f_t^{-1}(0)$}
\put(45,66){$0$}
\put(45,50){$1$}
\put(45,35){$2$}
\put(45,18){$3$}
\put(45,3){$\ge4$}
\put(60,83){$f_t^{-1}(0)\cap S(f_t)$ : isolated} 
\put(60,78){(resp. $=\emptyset$)}
\put(60,68){Nash triviality for} 
\put(60,63){$\{(N,f_t^{-1}(0))\}$}
\put(60,34){Blow-Nash triviality for} 
\put(60,29){(resp. Nash triviality for)}
\put(60,24){$\{(N,f_t^{-1}(0))\}$}
\put(115,80){$f_t^{-1}(0)\cap S(f_t)$ : non-isolated}
\put(115,66){This case never happens.}
\put(115,53){Blow-Nash triviality for}
\put(115,48){$\{(N,f_t^{-1}(0))\}$}
\put(115,38){Blow-semialgebraic triviality for}
\put(115,33){$\{(N,f_t^{-1}(0))\}$}
\put(115,23){Main result (Blow-semialgebraic}
\put(115,18){triviality consistent with}
\put(115,13){a compatible filtration)}
\put(115,3){(Fukui observation)}
\end{picture}
\end{center}

\centerline{Table (*)}

\vspace{7mm}

In the following sections we give a finiteness theorem
on Blow-semialgebraic triviality for a family of the main parts
of 3-dimensional algebraic sets, and our main theorem, namely,
a finiteness theorem on Blow-semialgebraic triviality
consistent with a compatible filtration for a family
of 3-dimensional algebraic sets.
But the situation in the higher dimensional case is more complicated.
In \cite{nakai} I. Nakai constructs a family of polynomial
mappings : $\{ f_a : (\R^3,0) \to (\R^2,0) \}$ in which
topological moduli appear.
On the other hand, T. Fukui \cite{fukui} observes 
that there is a Nash trivial simultaneous resolution for a family 
of 4-dimensional algebraic sets $\{ V_a \}$ in $\R^6$ 
such that the resolution maps represent the Nakai family.
These facts mean that given a simultaneous resolution for a family
of higher dimensional algebraic sets with the parameter algebraic 
set $J$, there does not always exist a thin algebraic subset
$J_1 \subset J$ so that finiteness holds on Blow-semialgebraic
triviality even for the family of the main parts 
of algebraic sets over $J \setminus J_1$.  
Incidentally, the Fukui observation is generalised to a theorem 
of representation by a desingularisation map for any polynomial 
mapping $: \R^n \to \R^p$, $n \ge p$.
For the details of the statement, consult \cite{bekkafukuikoike}.

Using a similar argument to the proof of Theorem III,
we can show the following:

\vspace{3mm}

\noindent {\bf Theorem IV.} {\em Suppose that}
$\dim f_t^{-1}(0) \ge 2$ {\em and}
$\dim f_t^{-1}(0) \cap S(f_t) = 1$ {\em over} $J$.
{\em Then there exists a finite partition} 
$J = Q_1 \cup \cdots \cup Q_u$ {\em such that for each} $i$,

(1) $Q_i$ {\em is a Nash open simplex, and}

(2) $(N \times Q_i,F_{Q_i}^{-1}(0))$ {\em admits a Blow-semialgebraic
trivialisation along} $Q_i$.

\begin{proof}[Proof of Theorem IV] 
By the assumption we see that 
$\dim Sing F_{Q_i}^{-1}(0) \le \dim Q_i + 1$.
Then $\dim \pi_t(S\pi_t) \le 1$ for any $t \in Q_i$.
Therefore the statement follows similarly to Theorem III.
\end{proof}


\bigskip
\section{Finiteness for the main parts of $3$-dimensional algebraic sets.}
\label{finitemainparts}
\medskip

In this section we describe a weak version of our main theorem.
Namely, we show a finiteness theorem on Blow-semialgebraic triviality
for a family of the main parts of $3$-dimensional algebraic sets.

Let $N$ be an affine nonsingular algebraic variety in $\R^m$,
and let $J$ be an algebraic set in $\R^a$.
Let $f_t : N \to \R^k$ ($t \in J$) be a polynomial mapping
such that $\dim f_t^{-1}(0) \le 3$ for $t \in J$.
Assume that $F$ is a polynomial mapping.
Then we have

\begin{prop}\label{mainpart}
There exists a finite partition
$J = Q_1 \cup \cdots \cup Q_u$
which satisfies the following conditions:

(1) Each $Q_i$ is a Nash open simplex,
and $\dim f_t^{-1}(0)$ and $\dim f_t^{-1}(0) \cap S(f_t)$
are constant over $Q_i$.         

(2) For each $i$ where $\dim f_t^{-1}(0) = 3$ and
$\dim f_t^{-1}(0) \cap S(f_t) \ge 1$ over $Q_i$,
there is a Nash simultaneous resolution $\Pi_i : \mathcal{M}_i
\to N \times Q_i$ of $F_{Q_i}^{-1}(0)$ in $N \times Q_i$ such that
$MF_{Q_i}^{-1}(0)$ admits a $\Pi_i$-Blow-semialgebraic trivialisation 
along $Q_i$.

In the case where $\dim f_t^{-1}(0) \le 2$ over $Q_i$
or $\dim f_t^{-1}(0) \cap S(f_t) \le 0$ over $Q_i$,
$(N \times Q_i,F_{Q_i}^{-1}(0))$ admits
a trivialisation listed in table (*).
\end{prop}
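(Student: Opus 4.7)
The plan is to execute the eight-step programme of \S 3, with the modifications flagged in Remark \ref{remark33} for passing from the ambient resolution space to the strict transform.

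First, I would apply Process I: by Hardt's triviality theorem (Theorem \ref{hardt}), partition $J$ into finitely many semialgebraic pieces on which both $\dim f_t^{-1}(0)$ and $\dim f_t^{-1}(0)\cap S(f_t)$ are constant, then refine into Nash open simplices via Theorem \ref{lojasiewicz}. On pieces where $\dim f_t^{-1}(0)\le 2$, the trivialisation recorded in Table (*) follows from Theorem III (combined with Processes VII--VIII when $N$ is non-compact); on pieces where $\dim f_t^{-1}(0)\cap S(f_t)\le 0$, Theorem II gives Blow-Nash triviality. The remaining case is $\dim f_t^{-1}(0)=3$ with $\dim f_t^{-1}(0)\cap S(f_t)\ge 1$. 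Since $N$ is algebraic, Process VII reduces this to the compact case by projectivising $N\hookrightarrow\R P^m$ and desingularising the resulting variety.

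Next, I would apply Process II (Theorem II) to obtain a finite decomposition into Nash open simplices $Q_i$ together with Nash trivial simultaneous resolutions $\Pi_i:\mathcal{M}_i\to N\times Q_i$, with strict transform $\mathcal{V}_i^{\prime}$ of $\mathcal{V}_i=F_{Q_i}^{-1}(0)$ and exceptional divisors $\mathcal{D}_1,\dots,\mathcal{D}_d$. Process III (Lemma \ref{fukuda}) then yields compatible $C^{\omega}$-Nash Whitney stratifications of $\mathcal{M}_i$ and $N\times Q_i$, adapted to $\mathcal{V}_i^{\prime}$, to the divisors, to their intersections and to their $\Pi_i$-images, so that both $\Pi_i$ and the projection $q:N\times Q_i\to Q_i$ become stratified mappings.

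The hard part will be the modified Process IV. Instead of trivialising $\Pi_i|_{\mathcal{D}_1\cup\cdots\cup\mathcal{D}_d}$ as in \S 3, the goal is to trivialise
$$
\Pi_i|_{\mathcal{V}_i^{\prime}\cap(\mathcal{D}_1\cup\cdots\cup\mathcal{D}_d)}
$$
semialgebraically over $Q_i$ minus a thin semialgebraic subset. The fibres are unions of normal-crossing nonsingular algebraic surfaces (since $\dim V_t^{\prime}=3$ and each $D_{j,t}$ has codimension $1$), and the image under $\pi_t$ can be up to $2$-dimensional, which is too large for the Bekka--Kurdyka--Raby type argument of \cite{bekka,kurdykaraby} that handled the $1$-dimensional images in Theorem III. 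To get past this, I would invoke Sabbah's ``sans \'eclatement'' framework \cite{sabbah2} which, based on Hironaka's flattening theorem \cite{hironaka2,hironaka4}, refines the stratification so that the pair $(\Pi_i,\mathcal{S})$ meets the hypotheses of Lemma \ref{weakisotopy} (where Whitney regularity across strata mapped to distinct base strata is not required); combined with Sabbah's local finiteness theorem \cite{sabbah1} for analytic maps with image of dimension at most $2$, this should confine the locus of failure of $(a_{\pi_t})$-regularity to a thin semialgebraic subset of $Q_i$.

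Once Process IV is secured, I would apply Process V, i.e.\ the semialgebraic First Isotopy Lemma \ref{1stisotopy}, to extend the trivialisation from $\mathcal{V}_i^{\prime}\cap(\mathcal{D}_1\cup\cdots\cup\mathcal{D}_d)$ to all of $\mathcal{V}_i^{\prime}$, and then push it down via $\Pi_i$: because $\Pi_i$ is a Nash isomorphism over $N\times Q_i\setminus\Pi_i(\mathcal{D}_1\cup\cdots\cup\mathcal{D}_d)$ and this complement contains a dense open of $M\mathcal{V}_i$, the induced trivialisation is a semialgebraic one of $MF_{Q_i}^{-1}(0)$ along $Q_i$, giving the required $\Pi_i$-Blow-semialgebraic trivialisation. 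Finally, Process VI handles the thin exceptional subset $R_i\subset Q_i$ on which Sabbah's machinery fails: subdivide $R_i$ into Nash open simplices of strictly smaller parameter dimension and re-enter the circulatory programme; this terminates in finitely many steps by induction on $\dim Q_i$.
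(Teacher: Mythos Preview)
Your outline follows the right programme, but Process IV as you describe it has a genuine gap: Sabbah's sans-\'eclatement theorem \cite{sabbah2} and his local finiteness result \cite{sabbah1} are formulated for \emph{complex} analytic or algebraic morphisms, and nothing in your proposal explains how to bring them to bear on the real map $\Pi_i|_{\mathcal{V}_i^{\prime}\cap(\mathcal{D}_1\cup\cdots\cup\mathcal{D}_d)}$. The paper does not simply ``invoke'' Sabbah; it passes to the graph of $\Pi|_E$, takes its associated algebraic set in $\R P^{\ell}\times\R P_{[m]}\times\R P_{[a]}$, complexifies to $W_{\C}\subset\C P^{\ell}\times\C P_{[m]}\times\C P_{[a]}$, and only then applies Theorem \ref{sabbaht1} to each $\Upsilon_i=\breve{\Pi}_{\C}|_{W_{i\C}}$. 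Descending back to the reals requires the conjugation-invariance of the modification $\varpi_i$ and of the resulting stratification (Remark \ref{remark53}), together with the four Observations that transfer Whitney $(b)$- and Thom $(a_f)$-regularity from complex strata to their real parts. Moreover, Sabbah's theorem does not ``refine the stratification so that the pair $(\Pi_i,\mathcal{S})$ meets the hypotheses of Lemma \ref{weakisotopy}'': it produces a proper modification $\varpi_i:\tilde{Y}_{i\C}\to Y_{i\C}$ of the \emph{target}, and it is the \emph{pullback} $\tilde{\Upsilon}_i$ that is sans \'eclatement. One then trivialises $\hat{\Upsilon}_i$ upstairs and must push the triviality back down through $\varpi_i$; this descent is exactly where the $2$-dimensionality of the fibres $W_{i\C,t}$ enters, via the argument of Theorem \ref{sabbaht2}. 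A further point you omit is the gluing of the separate trivialities of $\Upsilon_{i\R}$ over the normal-crossing intersections $W_i\cap W_k$, which the paper handles in Remark \ref{remark55} by feeding $\Upsilon_i(W_{i\C}\cap W_{j\C})$ into the $\Sigma_i$ of Lemma \ref{lemmasa}.

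There is a second, smaller gap in your Process VI. The thin set $R_i\subset Q_i$ removed at Process IV is only semialgebraic, while the hypotheses of the proposition (and the complexification step above) require the parameter set to be \emph{algebraic}. You cannot simply ``re-enter the circulatory programme'' over $R_i$; the paper instead takes the Zariski closure $\overline{\overline{R}}$ of $R$ in $\R^a$, observes that the finiteness already established over $J\setminus R$ survives restriction to $J\setminus\overline{\overline{R}}$ after a further finite subdivision, and then inducts on $\dim\overline{\overline{R}}<\dim J$.
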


Let us start to show the proposition above.
We can derive the following lemma easily from
table (*) in \S 4.

\begin{lem}\label{dimension2}
Let $J_0$ be a semialgebraic subset of $J$ such that
$\dim f_t^{-1}(0) \le 2$ for $t \in J_0$
or $\dim f_t^{-1}(0) \cap S(f_t) \le 0$ for $t \in J_0$.
Then there exists a finite partition
$J_0 = Q_1 \cup \cdots \cup Q_u$
which satisfies the following conditions:

{\em (1)} Each $Q_i$ is a Nash open simplex.

{\em (2)} For each $i$, there is a Nash simultaneous
resolution $\Pi_i : {\mathcal M}_i \to N \times Q_i$ of
$F_{Q_i}^{-1}(0)$ in $N \times Q_i$ over $Q_i$
such that $(N \times Q_i, F_{Q_i}^{-1}(0))$  admits a trivialisation
on $\Pi_i$ listed in table {\em (*)}.
\end{lem}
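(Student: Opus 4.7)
The plan is to reduce the statement to results already collected in table (*) by a case-by-case argument on the two dimension functions $t\mapsto \dim f_t^{-1}(0)$ and $t\mapsto \dim f_t^{-1}(0)\cap S(f_t)$. First, by Hardt's Semialgebraic Triviality Theorem (Theorem \ref{hardt}) applied to the semialgebraic sets $F^{-1}(0)$ and $SF^{-1}(0)$ inside $N\times J_0$ over the base $J_0$, I can subdivide $J_0$ into finitely many semialgebraic pieces on each of which both $\dim f_t^{-1}(0)$ and $\dim f_t^{-1}(0)\cap S(f_t)$ are constant. This is exactly Process I of the programme in \S\ref{programme}.

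On each such piece $J'$, the standing hypothesis of the lemma leaves only three cases: either (a) $\dim f_t^{-1}(0)\cap S(f_t)=-1$, i.e.\ $f_t^{-1}(0)$ avoids $S(f_t)$, (b) $\dim f_t^{-1}(0)\cap S(f_t)=0$, the isolated-singularity case, or (c) $\dim f_t^{-1}(0)\le 2$ with $\dim f_t^{-1}(0)\cap S(f_t)\ge 1$. Case (a) is the Coste--Shiota finiteness theorem \cite{costeshiota} for Nash triviality of $\{(N,f_t^{-1}(0))\}$; case (b) is Theorem II, yielding a Nash trivial simultaneous resolution and a Blow-Nash trivialisation; case (c) is Theorem III, yielding a Nash simultaneous resolution and a Blow-semialgebraic trivialisation. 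In every case one obtains a finite partition into Nash manifolds over which the required trivialisation listed in table (*) is produced, together with the corresponding $\Pi_i$ (in case (a) one can take $\Pi_i=\mathrm{id}$, since no blow-up is needed).

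Since $N$ need not be compact, in cases (b) and (c) one has to invoke Processes VII and VIII of \S\ref{programme}: $N$ is Nash diffeomorphic to an affine nonsingular algebraic variety, so one may replace it with such a variety, projectivise, desingularise, and transport the trivialisation obtained over the resulting compact ambient variety back to $N\times Q_i$ via the Nash isomorphism outside the exceptional divisor. This reduction is precisely what Theorems II and III are asserted to handle in the non-compact case, so it introduces no new difficulty here.

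Finally, the pieces $Q_i$ produced at the previous steps are Nash manifolds but not a priori Nash open simplices. A further subdivision using the Semialgebraic Triangulation Theorem (Theorem \ref{lojasiewicz}), together with Remark \ref{remark11} to remove the boundedness assumption, replaces each $Q_i$ by a finite union of Nash open simplices. Because the trivialisations in cases (a)--(c) are stable under restriction to a Nash open submanifold, passing to this finer partition preserves all conclusions. The main (trivial) obstacle is only bookkeeping: checking that the three cases collectively exhaust the hypothesis $\dim f_t^{-1}(0)\le 2$ or $\dim f_t^{-1}(0)\cap S(f_t)\le 0$, and that each appeal to a previously stated theorem gives exactly the trivialisation listed in the corresponding row of table (*).
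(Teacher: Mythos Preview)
Your proposal is correct and follows essentially the same route as the paper, which simply says the lemma ``can be derived easily from table (*)''; you have merely unpacked what that derivation entails (Process I to make the two dimensions constant, then invoke the appropriate row of table (*), then refine to Nash open simplices). One small imprecision: in your case (c) you cite Theorem III, but that theorem assumes $\dim f_t^{-1}(0)=2$; the subcase $\dim f_t^{-1}(0)=1$ with $\dim f_t^{-1}(0)\cap S(f_t)=1$ is covered by the Blow-Nash row of table (*) rather than by Theorem III, so you should point to the relevant table entry (or to Theorem IV, whose hypothesis $\dim f_t^{-1}(0)\cap S(f_t)=1$ fits) rather than to Theorem III alone.
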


By Process I in \S 3, there is a subdivision of $J$ into
two semialgebraic sets $J_1$ and $J_2$ such that
\begin{equation}
  \ \begin{cases}
    \ \dim f_t^{-1}(0) \le 2 \ \text{or} \ \dim f_t^{-1}(0) \cap S(f_t) \le 0
     & \text{for $t \in J_1$} \\
    \ \dim f_t^{-1}(0) = 3 \ \text{and} \ \dim f_t^{-1}(0) \cap S(f_t) \ge 1
     & \text{for $t \in J_2.$}
            \end{cases} 
\end{equation}

\noindent Let $\dim J_2 = b$, and let $\Delta$ be the Zariski
closure of $J_2$ in $\R^a$.
Then $J_2 \subset \Delta \subset J$ such that $\dim \Delta = b$.
Since $J - \Delta \subset J_1$, there is a finite partition 
$J - \Delta = Q_1 \cup \cdots \cup Q_u$
which satisfies the conditions in Lemma \ref{dimension2}.
Therefore it suffices to show the case $\Delta = J$.

Let us consider the algebraic set $V = F^{-1}(0)$. Then
$$
V = \{ (x,t) \in N \times \Delta \ | \ \tilde{F}(x,t) = 0 \}
$$
is a ($3 + b$)-dimensional algebraic set in $N \times \R^a$.
We apply the desingularisation theorem of Hironaka (\cite{hironaka1})
to $V \subset N \times \R^a$.
Let $\dim N = n$.
There is an algebraic resolution of $V$ in $N \times \R^a$,
$\Pi : M \to N \times \R^a$,
where $M$ is a smooth algebraic set of dimension ($n + a$).
Namely, $\Pi$ is the composite of a finite sequence of blowings-up
$\sigma_{j+1} : M_{j+1} \to M_j$ with smooth algebraic centres $C_j$
such that:

(1) The critical set of $\Pi$ is a union of algebraic divisors
$D_1, \cdots , D_d$.

(2) The strict transform $V^{\prime}$ of $V$ in $M$ by $\Pi$
is a smooth algebraic set of $M$.

(3) $V^{\prime}, D_1, \cdots , D_d$ simultaneously have only normal
crossings.

(4) There is a thin algebraic subset $T$ in $V$ so that
$\Pi|_{\Pi^{-1}(V-T)} : \Pi^{-1}(V-T) \to V - T$
is a Nash isomorphism.

Let $E_i = V^{\prime} \cap D_i$, $1 \le i \le d$,
and let $E$ be the union of $E_i$'s.
We make some remarks here.

\begin{rem}\label{remark51}
(1) $V^{\prime}$ is a $(3 + b)$-dimensional smooth algebraic set.

(2) Each $E_i$ is a smooth algebraic subset
of $V^{\prime}$ of codimension $1$.

(3) $E_1, \cdots , E_d$ simultaneously have only normal
crossings in $V^{\prime}$.
\end{rem}

\begin{rem}\label{remark52}
Let $U$ be an open semialgebraic subset of $V^{\prime}$.
Let ${\mathcal S}_1$ be a finite Whitney stratification
of $E \cap U$ which is compatible with
$E_{i(1)} \cap \cdots \cap E_{i(\eta )} \cap U$'s
for $1 \le i(1) < \cdots < i(\eta ) \le d$, $1 \le \eta \le d$,
and let ${\mathcal S}_2$ be the set of
connected components of $(V^{\prime} - E) \cap U$.
Then ${\mathcal S} = {\mathcal S}_1 \cup {\mathcal S}_2$
is also a finite Whitney stratification of $U$.
\end{rem}

Let $q : \R^m \times \R^a \to \R^a$ be the
canonical projection.
For $\K = \R$ or $\C$, let $\K P_{[r]}$ denote
the product of $r$ projective lines
$\K P^1 \times \cdots \times \K P^1$.
Since the real projective space is affine and
$\Pi$ is the composite of a finite sequence of
blowings-up with smooth algebraic centres,
we may think $M$ is an affine algebraic set
in some Euclidean space $\R^{\ell}$.
Then we have the following diagram:

\vspace{6mm}

\unitlength 0.1in
\begin{picture}( 27.4000, 13.0000)(  2.0000,-15.0000)
\put(2.0000,-10.4000){\makebox(0,0)[lb]{$\R^m \times \R^a \ \supset \ N \times \R^a \ \supset \ N \times \Delta \ \supset \ V$ }}%
\put(5.3000,-16.7000){\makebox(0,0)[lb]{$\R^a \ \ \ \ = \ \ \ \ \R^a \ \ \ \ \supset \ \ \ \ \Delta$}}%
%
\special{pn 8}%
\special{pa 1416 450}%
\special{pa 1416 810}%
\special{fp}%
\special{sh 1}%
\special{pa 1416 810}%
\special{pa 1436 744}%
\special{pa 1416 758}%
\special{pa 1396 744}%
\special{pa 1416 810}%
\special{fp}%
%
\special{pn 8}%
\special{pa 566 1110}%
\special{pa 566 1470}%
\special{fp}%
\special{sh 1}%
\special{pa 566 1470}%
\special{pa 586 1404}%
\special{pa 566 1418}%
\special{pa 546 1404}%
\special{pa 566 1470}%
\special{fp}%
\put(14.6000,-6.6000){\makebox(0,0)[lb]{$\Pi$}}%
%
\special{pn 8}%
\special{pa 2896 460}%
\special{pa 2896 820}%
\special{fp}%
\special{sh 1}%
\special{pa 2896 820}%
\special{pa 2916 754}%
\special{pa 2896 768}%
\special{pa 2876 754}%
\special{pa 2896 820}%
\special{fp}%
\put(29.4000,-6.5000){\makebox(0,0)[lb]{$\Pi$}}%
\put(6.1500,-13.1000){\makebox(0,0)[lb]{$q$}}%
%
\special{pn 8}%
\special{pa 1426 1100}%
\special{pa 1426 1460}%
\special{fp}%
\special{sh 1}%
\special{pa 1426 1460}%
\special{pa 1446 1394}%
\special{pa 1426 1408}%
\special{pa 1406 1394}%
\special{pa 1426 1460}%
\special{fp}%
\put(14.7500,-13.0000){\makebox(0,0)[lb]{$q$}}%
%
\special{pn 8}%
\special{pa 2276 1100}%
\special{pa 2276 1460}%
\special{fp}%
\special{sh 1}%
\special{pa 2276 1460}%
\special{pa 2296 1394}%
\special{pa 2276 1408}%
\special{pa 2256 1394}%
\special{pa 2276 1460}%
\special{fp}%
\put(23.2500,-13.0000){\makebox(0,0)[lb]{$q$}}%
\put(5.1000,-3.7000){\makebox(0,0)[lb]{$\R^{\ell} \ \ \ \supset \ \ \ \ \ M \ \ \ \ \ \ \ \ \ \ \supset \ \ \ \ \ \ \ \ \ \ V^{\prime} \ \ \ \supset \ \ \ E$}}%
\end{picture}%

\vspace{8mm}

Set $\Pi^E = \Pi |_E$. Then we have
$$
\begin{CD}
\R^{\ell} \supset E @>\text{$\Pi^E$}>> \R^m \times \R^a
@>\text{$q$}>> \R^a.
\end{CD}
$$
We denote by $G$ the graph of $\Pi^E$.
Then $G$ is a $(2 + b)$-dimensional algebraic set in
$\R^{\ell} \times \R^m \times \R^a$.
Through the canonical embeddings, let us consider
$\R^{\ell} \subset \R P^{\ell}$, $\R^m \subset \R P_{[m]}$
and $\R^a \subset \R P_{[a]}$.
Let $W$ be the associated algebraic set of $G$ in
$\R P^{\ell} \times \R P_{[m]} \times \R P_{[a]}$.
We regard $\Pi^E : E \to \R^m \times \R^a$ as
the canonical projection $\pi : G \to \R^m \times \R^a$.
Let $\breve{\Pi}$ be the canonical projection from
$W$ to $\R P_{[m]} \times \R P_{[a]}$.
By construction, $\breve{\Pi} |_{W \cap \R^{\ell} 
\times \R^m \times \R^a} = \pi$.
Let $\breve{q} : \R P_{[m]} \times \R P_{[a]} \to \R P_{[a]}$
be the canonical projection which is an extension of $q$.
Then we have a composite of projections:
\begin{equation}
\begin{CD}
\R P^{\ell} \times \R P_{[m]} \times \R P_{[a]} \supset W
@>\text{$\breve{\Pi}$}>> \R P_{[m]} \times \R P_{[a]}
@>\text{$\breve{q}$}>> \R P_{[a]}.
\end{CD}
\end{equation}
Here we consider the complexification of the above
composite of projections:
\begin{equation}
\begin{CD}
\C P^{\ell} \times \C P_{[m]} \times \C P_{[a]} \supset W_{\C}
@>\text{$\breve{\Pi}_{\C}$}>> \C P_{[m]} \times \C P_{[a]}
@>\text{$\breve{q}_{\C}$}>> \C P_{[a]}.
\end{CD}
\end{equation}

\vspace{3mm}

In this section we want to show finiteness for a family 
of the main parts of 3-dimensional algebraic sets, 
following our programme in \S 3.
We remember that Lemma \ref{dimension2} follows from our programme
with Assumption A in Process IV.
But Assumption A is not always satisfied
in the case where $\dim f_t^{-1}(0) = 3$
for any element $t$ of $J_1$ whose Zariski closure is $J$.
Namely, $\Pi^E$ may contain a family of non-Thom maps,
even after taking a finite subdivision of the parameter space
and considering only on the maximal dimensional subspaces.
Therefore we consider some other method for which
an alternative assumption is satisfied in Process IV.
Here we recall the works of C. Sabbah on ``sans \'eclatement"
stratified analytic morphisms (\cite{sabbah2})
and locally finiteness property on topological equivalence
for a family of complex analytic mappings (\cite{sabbah1}).
We don't describe the original forms of Sabbah's results
but modified ones suitable for our purpose.
We first prepare a terminology for it.
Let $f : A \to B$ be a stratified, complex polynomial mapping
between complex algebraic varieties.
Namely, there are finite stratifications 
$\{ A_{\alpha} \}$ of $A$ and $\{ B_{\beta} \}$ of $B$
such that for any $\alpha$, there exists $\beta$
for which $f$ induces a surjective submersion
$f : A_{\alpha} \to B_{\beta}$.
In this paper, a stratification for an algebraic variety
always means a finite one.
We say that a couple of stratifications 
$\mathcal{S} = (\{ A_{\alpha} \} , \{ B_\beta \} )$
is a $\C$-{\em algebraic stratification of} $f$,
if the following conditions are satisfied:

(1) Each $A_{\alpha}$ (resp. $B_{\beta}$) is a (connected)
complex-analytic manifold of $A$ (resp. $B$).

(2) For each $\alpha$ (resp. $\beta$),
$\overline{A_{\alpha}}$ and $\overline{A_{\alpha}} - A_{\alpha}$
(resp. $\overline{B_{\beta}}$ and $\overline{B_{\beta}} - B_{\beta}$)
are algebraically closed in $A$ (resp. $B$).

(3) For any $A_{\alpha}$, there exists $B_{\beta}$ such that
$f(A_{\alpha}) = B_{\beta}$.

Let $X_{\C}$, $Y_{\C}$ be the products of some complex projective spaces,
and let $\tau : X_{\C} \times Y_{\C} \to Y_{\C}$ 
be the canonical projection.
Let $W_{\C}$ be an algebraic set of $X_{\C} \times Y_{\C}$,
and set $\Upsilon = \tau |_{W_{\C}} : W_{\C} \to Y_{\C}$.

\begin{thm}\label{sabbaht1} (Modified form of \cite{sabbah2}, Theorem 1)
Let $\mathcal{S}$ be a $\C$-algebraic stratification of $\Upsilon$.
Then there exist a proper algebraic modification 
$\varpi : \tilde{Y}_{\C} \to Y_{\C}$
and a $\C$-algebraic stratification $\tilde{\mathcal{S}}$ of
the pull-back of $\Upsilon$ by $\varpi$, $\tilde{\Upsilon} : 
W_{\C} \underset{Y_{\C}}{\times} \tilde{Y}_{\C} \to \tilde{Y}_{\C}$,
compatible with $\mathcal{S}$ such that 
$(\tilde{\Upsilon},\tilde{\mathcal{S}})$ is sans \'eclatement
in the sense of Sabbah (cf. Remark 3.4).
\end{thm}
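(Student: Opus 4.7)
The plan is to reduce the statement to Hironaka's flattening theorem \cite{hironaka2, hironaka4} applied to the stratification, and then verify that flatness of the strict transforms implies the sans éclatement property. First, observe that $\Upsilon$ is proper since $W_\C$ is an algebraic subset of the projective variety $X_\C \times Y_\C$, so each closure $\overline{A_\alpha}$ in the stratification $\mathcal{S} = (\{A_\alpha\}, \{B_\beta\})$ gives a proper algebraic map $\Upsilon_\alpha := \Upsilon|_{\overline{A_\alpha}} : \overline{A_\alpha} \to Y_\C$.

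The first step is to apply Hironaka's flattening theorem simultaneously to the finite collection $\{\Upsilon_\alpha\}_\alpha$. Since flattening for a finite family can be achieved by composing the modifications required for each member (flatness being preserved under further blowings-up along centers that avoid the flat locus), this yields a single proper algebraic modification $\varpi : \tilde{Y}_\C \to Y_\C$, obtained as a composite of blowings-up along smooth algebraic centers, such that each strict transform $\widetilde{\overline{A_\alpha}}$ in $W_\C \times_{Y_\C} \tilde{Y}_\C$ is flat over $\tilde{Y}_\C$. Denote the pull-back by $\tilde{\Upsilon} : W_\C \times_{Y_\C} \tilde{Y}_\C \to \tilde{Y}_\C$.

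The second step is to construct the refined $\C$-algebraic stratification $\tilde{\mathcal{S}}$. I would begin by pulling back the strata $\{B_\beta\}$ via $\varpi$, then subdividing the total transforms of each $\overline{B_\beta}$ and of the exceptional divisor of $\varpi$ into connected complex-analytic manifolds that form an algebraic stratification $\{\tilde{B}_{\beta'}\}$ of $\tilde{Y}_\C$. Over each $\tilde{B}_{\beta'}$, one subdivides the pull-backs of the $\overline{A_\alpha}$ into complex-analytic strata $\{\tilde{A}_{\alpha'}\}$ so that $\tilde{\Upsilon}$ restricts to a surjective submersion on each $\tilde{A}_{\alpha'}$ onto some $\tilde{B}_{\beta'}$, the algebraic-closure conditions of a $\C$-algebraic stratification hold, and $\tilde{\mathcal{S}}$ refines the pull-back of $\mathcal{S}$ (which gives the compatibility clause).

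The hard part is verifying the sans éclatement property for $(\tilde{\Upsilon}, \tilde{\mathcal{S}})$: for any pair of strata $\tilde{A}_{\alpha'} \supset \tilde{A}_{\alpha''}$ in the source mapped to strata $\tilde{B}_{\beta'} \supset \tilde{B}_{\beta''}$, the closure $\overline{\tilde{A}_{\alpha'}}$ should meet the fiber $\tilde{\Upsilon}^{-1}(y)$ continuously as $y$ moves from $\tilde{B}_{\beta'}$ to $\tilde{B}_{\beta''}$, without sudden appearance of extra components. This is precisely where flatness of the strict transforms enters: flatness forces the Hilbert polynomial of the fiber (and in particular its dimension and multiplicities) to be locally constant on $\tilde{Y}_\C$, which rules out the component-jumping phenomenon that obstructs the 2nd Isotopy Lemma for blowings-up. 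This equivalence between flatness after modification and the sans éclatement condition is exactly the content of Sabbah's main result in \cite{sabbah2}, and the present theorem is an algebraic adaptation whose verification follows his argument step by step, with the simplification that everything here is projective-algebraic so properness and algebraicity of $\varpi$ are automatic at each stage.
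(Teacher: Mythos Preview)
The paper does not supply its own proof of this theorem; it is stated as a modified form of Sabbah's Theorem~1 in \cite{sabbah2}, and Remark~\ref{remark53} confirms that Sabbah's proof rests on Hironaka's flattening theorem. Your proposal correctly identifies flattening as the key ingredient and follows the same overall route, so at the level of strategy you are aligned with what the paper cites.

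One point worth tightening: your informal gloss of \emph{sans \'eclatement} as ruling out ``sudden appearance of extra components'' is suggestive of the name but does not match the precise definition the paper uses (Remark~\ref{remark32}(1)). There, a stratified map is sans \'eclatement if it satisfies the hypotheses of Thom's 2nd Isotopy Lemma---in particular Thom $(a_f)$-regularity for adjacent strata and Whitney $(b)$-regularity for strata over the same downstairs stratum---while dropping only the Whitney condition between strata mapped to different strata. The passage from flatness of the strict transforms to the $(a_f)$-regularity condition is the substantive analytic step in Sabbah's argument; your remark about Hilbert polynomials and constancy of fiber dimension addresses equidimensionality but does not by itself yield the limit-of-tangent-spaces condition that $(a_f)$ requires. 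Since you explicitly defer the verification to \cite{sabbah2}, this is not a gap in your outline, but you should be aware that the real work lies there rather than in the dimension-counting you sketch.
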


\begin{rem}\label{remark53}
Let $X$, $Y$ be the products of some real projective spaces,
and let $W$ be a real algebraic set of $X \times Y$.
We denote by $X_{\C}$, $Y_{\C}$ and $W_{\C}$ the complexifications
of $X$, $Y$ and $W$, respectively.
Let $\tau : X_{\C} \times Y_{\C} \to Y_{\C}$ be 
the canonical projection.
Then $\Upsilon = \tau |_{W_{\C}} : W_{\C} \to Y_{\C}$
is invariant under complex conjugation.
Sabbah proved Theorem \ref{sabbaht1} using the flattening theorem
of Hironaka \cite{hironaka1}.
Therefore we can assume that $\varpi$ is invariant under
complex conjugation. 
Suppose that $\mathcal{S}$ is a $\C$-algebraic stratification
of $\Upsilon$ which is invariant under complex conjugation.
Then, thanks to the construction of Sabbah \cite{sabbah2},
we can assume that $(\tilde{\Upsilon},\tilde{\mathcal{S}})$
in Theorem \ref{sabbaht1} is also invariant under complex conjugation.
\end{rem}

Applying Theorem 1 in \cite{sabbah2},
Sabbah established a local finiteness theorem on topological equivalence
for a family of complex analytic mappings defined over a
compact, connected, smooth analytic surface (\cite{sabbah1}, Theorem 2),
and proved a finite classification theorem on topological equivalence 
for complex polynomial mappings : $(\C^2,0) \to (\C^m,0)$
of a bounded degree as its corollary.

Let us recall Sabbah's local finiteness theorem in the
algebraic form, namely a finiteness theorem for a family of 
polynomial mappings as an outcome.
Let $W_{\C}$ and $Y_{\C}$ be compact complex algebraic
varieties, let $U_{\C}$ be a complex algebraic variety,
and let $\mu : W_{\C} \times U_{\C} \to Y_{\C} \times U_{\C}$
be a family of polynomial mappings and $q : Y_{\C} \times U_{\C}
\to U_{\C}$ be the canonical projection.
Set $W_{\C ,u} = (q \circ \mu)^{-1}(u)$ and $Y_{\C ,u} = q^{-1}(u)$
for $u \in U_{\C}$.
Suppose that $W_{\C}$ is nonsingular, connected and of dimension 2.
Then we have
  
\begin{thm}\label{sabbaht2} (\cite{sabbah1} Theorem 2)
The number of topological types of polynomial mappings
$\mu |_{W_{\C ,u}} : W_{\C ,u} \to Y_{\C ,u}$ is finite
over $U_{\C}$.
\end{thm}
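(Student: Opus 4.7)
The plan is to combine Theorem \ref{sabbaht1} with a complex-analytic analogue of the sans \'eclatement second isotopy lemma (cf.\ Remark \ref{remark32}) and to run a Noetherian induction on $\dim_{\C} U_{\C}$. First, I would realise $\mu$ as a projection between compact complex algebraic varieties, by replacing $W_{\C} \times U_{\C}$ and $Y_{\C} \times U_{\C}$ by suitable compactifications in products of complex projective spaces and passing to the Zariski closure of the graph of $\mu$. In this setting I would choose an initial $\C$-algebraic stratification $\mathcal{S}$ of the projection $\Upsilon = q \circ \mu$ compatible with the critical set of $\mu$, with the fibers of $q$, and with the relevant discriminant. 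By Theorem \ref{sabbaht1} (applied with $U_{\C}$ in the role of the target $Y_{\C}$ there) there exist a proper algebraic modification $\varpi : \tilde{U}_{\C} \to U_{\C}$ and a sans \'eclatement $\C$-algebraic stratification $\tilde{\mathcal{S}}$ of the pulled-back family $\tilde{\mu}$ refining the pull-back of $\mathcal{S}$.

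Then I would stratify $\tilde{U}_{\C}$ by the $\tilde{\mathcal{S}}$-images in $\tilde{U}_{\C}$ and single out its (finitely many) top-dimensional open strata $\tilde{U}_{0,\alpha}$. Over each $\tilde{U}_{0,\alpha}$, the sans \'eclatement property allows the complex-analytic analogue of Thom's second isotopy lemma (Lemma \ref{2ndisotopy} under the weakened regularity of Remark \ref{remark32}) to be applied to $\tilde{\mu}$, producing a topological trivialisation of the fiber maps $\tilde{\mu}_{\tilde u} : W_{\C} \to Y_{\C}$ as $\tilde u$ varies in $\tilde{U}_{0,\alpha}$. In particular only finitely many topological types of $\tilde{\mu}_{\tilde u}$ occur over $\bigcup_\alpha \tilde{U}_{0,\alpha}$. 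Since $\varpi$ is a proper algebraic modification, it is a biregular isomorphism over a Zariski-open dense subset $U_{\C}^{\circ} \subset U_{\C}$, so only finitely many topological types of $\mu_u$ occur for $u \in U_{\C}^{\circ}$.

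The complement $U_{\C} \setminus U_{\C}^{\circ}$ is a Zariski-closed algebraic subset of strictly smaller complex dimension; restricting the family $\mu$ to it and applying the theorem by Noetherian induction on $\dim_{\C} U_{\C}$ completes the argument. The hard part is the inner step that upgrades the sans \'eclatement property to topological triviality of the map family, and this is precisely where the hypothesis $\dim_{\C} W_{\C} = 2$ is indispensable. For $2$-dimensional sources the fiber topology of stratified complex-analytic maps is rigid enough that the weakened regularity of a sans \'eclatement stratification forces topological constancy along strata; in higher dimensions the Nakai example recalled in \S \ref{application} shows that genuine moduli of topological type can survive any such stratification, and this step of the argument would break down.
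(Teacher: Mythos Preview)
Your overall scaffolding (compactify, stratify, obtain generic triviality, then Noetherian-induct on $\dim_{\C} U_{\C}$) is the right shape, but the core step misfires. You apply Theorem \ref{sabbaht1} with $U_{\C}$ as the target, obtaining a modification $\varpi : \tilde{U}_{\C} \to U_{\C}$ and a sans \'eclatement stratification of the pullback of $q\circ\mu$. That makes the map \emph{to the parameter space} sans \'eclatement, which via the first isotopy lemma only trivialises the stratified \emph{sources} $W_{\C,u}$; it says nothing about Thom regularity of $\mu$ relative to a stratification of $Y_{\C}\times U_{\C}$, which is what the second isotopy lemma needs. Over the Zariski open where $\varpi$ is an isomorphism the fiber maps $\mu_u$ are literally unchanged, so any failure of $(a_\mu)$-regularity survives your modification untouched. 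Your concluding remark that the $2$-dimensional hypothesis makes ``the weakened regularity of a sans \'eclatement stratification force topological constancy'' is not a mechanism; it is where the real work should be.

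The argument Sabbah actually uses, and which the paper rehearses in Lemma \ref{lemmasa} and the paragraphs after diagram (\ref{complexdiagram}), applies Theorem \ref{sabbaht1} to $\mu$ itself, modifying the \emph{target} $Y_{\C}\times U_{\C}$ (not $U_{\C}$). The strict transform $\hat\mu$ is then sans \'eclatement, and the second isotopy lemma gives topological triviality of $\hat\mu$ over open strata of the parameter space. The hypothesis $\dim_{\C} W_{\C}=2$ enters at the descent from $\hat\mu$ back to $\mu$: the target modification pulls back to a modification $\hat h$ of the source whose exceptional locus $\Theta$ is thin, and because the fibers $W_{\C,u}$ are nonsingular surfaces, $\Theta$ meets a generic fiber in only \emph{finitely many points}. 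One then fills in these isolated points to push the triviality of $\hat\mu$ down to a triviality of $\mu$. This is the precise place where dimension $2$ is used, and it is exactly the step your proposal does not supply.
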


Note that Sabbah proved finiteness theorem not only on topological equivalence
but also on topological triviality implicitly in the theorem above.

Before going back to the original stage of our proof
(i.e. the stage at (5.3)), we describe some important
observations on the stratification of a complexified mapping
between complexified spaces which is invariant 
under complex conjugation.

Let $A_{\C} \subset X_{\C}$ be the complexification of 
a real algebraic set $A$ in the product of some real
projective spaces $X$.

\vspace{3mm}

\noindent Observation 1. Let $\mathcal{S}(A_{\C})$
be a $\C$-algebraic stratification of $A_{\C}$
which is invariant under complex conjugation.
We denote by $\mathcal{S}(A)$ the set of connected
components of $P_{\C} \cap X$ for all $P_{\C} \in
\mathcal{S}(A_{\C})$.
Note that some $P_{\C} \cap X$ may be empty.
But $\mathcal{S}(A)$ gives a stratification of $A$
which does not always satisfy the frontier condition.
Each stratum of $\mathcal{S}(A)$ is a Nash manifold
and the real dimension of any connected component
of $P_{\C} \cap X$ (if not empty) is equal
to the complex dimension of $P_{\C}$.

\vspace{3mm}

Since the Whitney $(b)$-regularity is just a property
of the inclusion of limit spaces in the Grassmannian,
we have the following:

\vspace{3mm}

\noindent Observation 2. Let $R, \ U \in \mathcal{S}(A)$ 
with $\overline{R} \cap U \ne \emptyset$, and let $R_{\C}$, 
$U_{\C} \in \mathcal{S}(A_{\C})$ such that $U \subset U_{\C}$
and $R \subset R_{\C}$.
Suppose that $R_{\C}$ is Whitney $(b)$-regular over $U_{\C}$.
Then $R$ is also Whitney $(b)$-regular over $U$ at any point of
$\overline{R} \cap U$.

\vspace{3mm}

Let $A_{\C} \subset X_{\C}$ ($B_{\C} \subset Y_{\C}$)
be the complexification of a real algebraic set 
$A$ (resp. $B$) in $X$ (resp. $Y$) as above, and let 
$f : (A_{\C},\mathcal{S}(A_{\C})) \to (B_{\C},\mathcal{S}(B_{\C}))$
be a stratified polynomial mapping 
which is invariant under complex conjugation such that 
$(\mathcal{S}(A_{\C}),\mathcal{S}(B_{\C}))$
is a $\C$-algebraic stratification of $f$.
We define $\mathcal{S}(B)$ similarly to $\mathcal{S}(A)$ in Observation 1.
Then we have

\vspace{3mm}

\noindent Observation 3. Take any $R \in \mathcal{S}(A)$.
For $P \in R$, let $f(P) \in R^{\prime} \in \mathcal{S}(B)$.
Then, locally around $P$, $f : R \to R^{\prime}$ is a submersion.

\vspace{3mm}

Because of the same reason as the Whitney $(b)$-regularity, we have

\vspace{3mm}

\noindent Observation 4. Let $R, \ U \in \mathcal{S}(A)$ 
with $\overline{R} \cap U \ne \emptyset$, and let $R_{\C}$, 
$U_{\C} \in \mathcal{S}(A_{\C})$ such that $U \subset U_{\C}$
and $R \subset R_{\C}$.
Suppose that $R_{\C}$ is Thom $(a_f)$-regular over $U_{\C}$.
Then $R$ is also Thom $(a_f)$-regular over $U$ at any point of
$\overline{R} \cap U$.

\vspace{3mm}

Now we make some preparations to apply Theorem \ref{sabbaht1} 
with the arguments in \cite{sabbah1} and the above 
observations to maps (5.3) and (5.2).
We first consider the parameter space.
Set $K_{\C} = (\breve{q}_{\C} \circ \breve{\Pi}_{\C})(W_{\C})$,
which is a complex algebraic set in $\C P_{[a]}$.
Let $\dim_{\C} K_{\C} = b_1$.
By construction, we have $b_1 \le b$.
Set $K = (\breve{q}_{\C} \circ \breve{\Pi}_{\C})(W) \
(= (\breve{q} \circ \breve{\Pi})(W))$.
Then we see that $K_{\C} \cap \R P_{[a]}$ is 
an algebraic set in $\R P_{[a]}$, and $K$ is 
a semialgebraic subset of $K_{\C} \cap \R P_{[a]}$.
Note that $\dim K = b_1 \le b$.

We next prepare some notations.
Let $I = \{ 1, 2, \cdots , d \}$.
We identify $E$ with $G$.
Then we have
$$
G = E = \bigcup_{i = 1}^d E_i.
$$
Let $W_i$, $i \in I$, be the
associated algebraic set of $E_i$ in
$\R P^{\ell} \times \R P_{[m]} \times \R P_{[a]}$, and let
$W_{i\C}$,  $i \in I$, be the complexification of $W_i$ 
in $\C P^{\ell} \times \C P_{[m]} \times \C P_{[a]}$.
Then we have

\begin{equation}
W = \bigcup_{i = 1}^d W_i, \ \ \ \
W_{\C} = \bigcup_{i = 1}^d W_{i\C}.
\end{equation}

Let us consider the map 
$\breve{\Pi}_{\C}|_{W_{i\C}}$, $i \in I$.
For simplicity, we set $\Upsilon_i = \breve{\Pi}_{\C}|_{W_{i\C}}$,
$Y_{i\C} = \Upsilon_i (W_{i\C})$ and 
$K_{i\C} = (\breve{q}_{\C} \circ \Upsilon_i )(W_{i\C})$.
Then $Y_{i\C}$ and $K_{i\C}$ are complex algebraic sets 
in $\C P_{[m]} \times \C P_{[a]}$ and $\C P_{[a]}$, respectively.
We stratify the map $\Upsilon_i : W_{i\C} \to Y_{i\C}$ so that
the couple of stratifications 
$\mathcal{S}_i = (\mathcal{S}_{W_{i\C}},\mathcal{S}^0_{Y_{i\C}})$
is a $\C$-algebraic stratification of $\Upsilon_i$
which is invariant under complex conjugation and has 
the following compatibilities:

\vspace{2mm}

(1) $\mathcal{S}_{W_{i\C}}$ is compatible with
$W_{i\C} \cap \C^{\ell} \times \C^m \times \C^a$ and
$W_{i\C} \cap (\bigcap_{\lambda \in \Xi} W_{\lambda\C})$'s
for any subset $\Xi \subset I \setminus \{ i \}$.
(Many of the latter intersections may be empty.)

(2) $\mathcal{S}^0_{Y_{i\C}}$ is compatible with
$Y_{i\C} \cap \C^m \times \C^a$ and the images of all the above subsets
of $W_{i\C}$ by $\Upsilon_i$.

\vspace{2mm}

\noindent Then, using the same argument
as Lemma in \cite{sabbah1}, we have the following:

\begin{lem}\label{lemmasa} 
There exists a proper algebraic modification 
$\varpi_i : \tilde{Y}_{i\C} \to Y_{i\C}$
such that for any algebraic subset $\Sigma_i$ of $Y_{i\C}$,
generically of relative dimension $1$ over $K_{i\C}$,
there are Whitney stratifications $\mathcal{S}_{Y_{i\C}}$ 
and $\mathcal{S}_{\tilde{Y}_{i\C}}$ of $Y_{i\C}$ and 
$\tilde{Y}_{i\C}$, respectively, and
a $\C$-algebraic stratification $\tilde{\mathcal{S}}_i = 
(\mathcal{S}_{\tilde{W}_{i\C}},\mathcal{S}_{\tilde{Y}_{i\C}})$ 
of the pull-back of $\Upsilon_i$ by $\varpi_i$, 
$\tilde{\Upsilon}_i :
\tilde{W}_{i\C} = W_{i\C} \underset{Y_{i\C}}{\times} \tilde{Y}_{i\C}
\to \tilde{Y}_{i\C}$,
which satisfy the following:

{\em (1)} $\mathcal{S}_{\tilde{Y}_{i\C}}$ is compatible with 
$\Gamma_i = \varpi_i^{-1}(\Sigma_i )$, 
and $\mathcal{S}_{Y_{i\C}}$ is a substratification
of $\mathcal{S}^0_{Y_{i\C}}$ compatible with $\Sigma_i$.

{\em (2)} $\varpi_i : (\tilde{Y}_{i\C},\mathcal{S}_{\tilde{Y}_{i\C}})
\to (Y_{i\C},\mathcal{S}_{Y_{i\C}})$ is a stratified mapping.

{\em (3)} $\tilde{\mathcal{S}}_i$ is compatible with $\mathcal{S}_i$, 
and $(\tilde{\Upsilon}_i,\tilde{\mathcal{S}}_i)$ is sans \'eclatement.

{\em (4)} There is a dense, smooth, Zariski open subset $\Omega_i$ of
$K_{i\C}$ such that the restriction over $\Omega_i$ of the
stratified mapping $\varpi_i : \Gamma_i \to \Sigma_i$ 
endowed with the stratifications introduced by
$\mathcal{S}_{\tilde{Y}_{i\C}}$ and $\mathcal{S}_{Y_{i\C}}$ 
is sans \'eclatement.

{\em (5)} If $B_{\beta}$ is a stratum of $\mathcal{S}_{Y_{i\C}}$
such that $\breve{q}_{\C}(\overline{B_{\beta}}) \cap \Omega_i \neq \emptyset$,
then the restriction $\breve{q}_{\C}|_{B_{\beta} \cap \Omega_i} :
B_{\beta} \cap \Omega_i \to \Omega_i$ is a submersion. 
\end{lem}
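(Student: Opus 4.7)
The plan is to mimic Sabbah's proof of the corresponding lemma in \cite{sabbah1} by combining Theorem \ref{sabbaht1} with generic smoothness arguments in the base $K_{i\C}$ and repeated refinement of stratifications.

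First I would apply Theorem \ref{sabbaht1} directly to the complex-conjugation-invariant $\C$-algebraic stratified mapping $\Upsilon_i : (W_{i\C}, \mathcal{S}_{W_{i\C}}) \to (Y_{i\C}, \mathcal{S}^0_{Y_{i\C}})$. This yields a proper algebraic modification $\varpi_i : \tilde{Y}_{i\C} \to Y_{i\C}$ together with a $\C$-algebraic stratification $\tilde{\mathcal{S}}_i^{(0)}$ of the pull-back $\tilde{\Upsilon}_i : \tilde{W}_{i\C} \to \tilde{Y}_{i\C}$, compatible with $\mathcal{S}_i$ and, by Remark \ref{remark53}, also invariant under complex conjugation, such that $(\tilde{\Upsilon}_i, \tilde{\mathcal{S}}_i^{(0)})$ is sans \'eclatement. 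This $\varpi_i$ is the required modification, and, crucially, its construction does not involve $\Sigma_i$.

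Next, given $\Sigma_i \subset Y_{i\C}$ generically of relative dimension $1$ over $K_{i\C}$, set $\Gamma_i = \varpi_i^{-1}(\Sigma_i)$. I would refine $\mathcal{S}^0_{Y_{i\C}}$ to a Whitney stratification $\mathcal{S}_{Y_{i\C}}$ compatible with $\Sigma_i$, then refine the $\tilde{Y}_{i\C}$-component of $\tilde{\mathcal{S}}_i^{(0)}$ to a Whitney stratification $\mathcal{S}_{\tilde{Y}_{i\C}}$ compatible with $\Gamma_i$ so that $\varpi_i$ becomes a stratified mapping between $\mathcal{S}_{\tilde{Y}_{i\C}}$ and $\mathcal{S}_{Y_{i\C}}$; the source stratification is refined accordingly by pull-back to yield $\mathcal{S}_{\tilde{W}_{i\C}}$. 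The sans \'eclatement property for $(\tilde{\Upsilon}_i, \tilde{\mathcal{S}}_i)$ survives these refinements because the controlled-tube and Thom $(a_f)$-type conditions used to formulate it (cf.\ Remark \ref{remark32}) behave functorially when passing to compatible substratifications that preserve the underlying submersion structure on strata. This produces conditions (1)--(3).

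For conditions (4) and (5), I would use generic smoothness in the base $K_{i\C}$. On a dense, smooth, Zariski open subset $\Omega_i \subset K_{i\C}$ -- obtained as the smooth locus of $K_{i\C}$ intersected with the complement of the non-flat loci of $\breve{q}_{\C}$ restricted to each relevant stratum of $\mathcal{S}_{Y_{i\C}}$ -- every stratum $B_\beta$ of $\mathcal{S}_{Y_{i\C}}$ whose $\breve{q}_{\C}$-image meets $\Omega_i$ maps to $\Omega_i$ by a submersion, giving (5). Shrinking $\Omega_i$ further and applying Theorem \ref{sabbaht1} once more to the restricted stratified mapping $\varpi_i|_{\Gamma_i} : \Gamma_i \to \Sigma_i$ over $\breve{q}_{\C}^{-1}(\Omega_i)$, where the hypothesis that $\Sigma_i$ is generically of relative dimension one over $K_{i\C}$ supplies the uniformity needed, yields (4). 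The main obstacle will be the careful bookkeeping necessary to verify that the sans \'eclatement property survives both the refinement against $\Sigma_i$ and the subsequent restriction to $\Omega_i$, mirroring the delicate Thom-regularity arguments of Sabbah \cite{sabbah2}; in particular, one must check that no new pairs of adjacent strata are created whose Thom regularity fails after the refinement.
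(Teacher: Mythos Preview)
Your overall strategy matches the paper exactly: the paper gives no independent proof of this lemma and simply writes ``using the same argument as Lemma in \cite{sabbah1}, we have the following,'' so deferring to Sabbah's argument via Theorem~\ref{sabbaht1} plus generic-smoothness bookkeeping in the base $K_{i\C}$ is precisely what is intended.

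There is one gap in your sketch, however. For property (4) you propose to apply Theorem~\ref{sabbaht1} a second time to $\varpi_i|_{\Gamma_i} : \Gamma_i \to \Sigma_i$. But that theorem produces a \emph{new} modification of the target in order to render the pull-back sans \'eclatement; it does not show that the given map is already sans \'eclatement over some Zariski open set of the base, which is what (4) asserts. What actually makes (4) work is the relative-dimension-one hypothesis on $\Sigma_i$: over a generic $\Omega_i \subset K_{i\C}$ the fibres of $\Sigma_i \to K_{i\C}$ are curves, and the restriction of a proper algebraic modification over a one-dimensional target, once suitably stratified, is automatically Thom $(a_f)$-regular (strata in the target curve are points or open arcs, and in either case the $(a_f)$-condition along adjacent source strata is trivially verified). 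This is the essential content of Sabbah's Lemma in \cite{sabbah1}, and it is the reason the hypothesis ``generically of relative dimension $1$ over $K_{i\C}$'' appears in the statement at all. Your second invocation of Theorem~\ref{sabbaht1} should be replaced by this direct dimension argument.
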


\begin{rem}\label{remark54}
The image $\Upsilon_i (W_{i\C})$ of $W_{i\C}$ by $\Upsilon_i$
is generically less than or equal to one-dimensional over $K_{i\C}$.
Note that for $j \ne i$, not only $\Upsilon_i (W_{i\C})$ but
$\Upsilon_i (W_{i\C} \cap W_{j\C})$ can be generically one-dimensional over $K_{i\C}$.
In that case, we can take $\Sigma_i$ in Lemma \ref{lemmasa} so that
$\Upsilon_i (W_{i\C} \cap W_{j\C}) \subset \Sigma_i$.
\end{rem}

Let $\hat{\Upsilon}_i : \hat{W}_{i\C} \to \tilde{Y}_{i\C}$ 
be the strict transform of $\Upsilon_i$ by $\varpi_i$.
Since $\hat{W}_{i\C}$ is an irreducible component of $W_{i\C}$,
the above stratification gives a $\C$-algebraic stratification 
$\hat{\mathcal{S}}_i = (\mathcal{S}_{\hat{W}_{i\C}},\mathcal{S}_{\tilde{Y}_{i\C}})$ 
of $\hat{\Upsilon}_i$ so that 
$(\hat{\Upsilon}_i,\hat{\mathcal{S}}_i)$ is sans \'eclatement.
As mentioned in Remark \ref{remark53}, we can assume that 
$\varpi_i$ and $(\hat{\Upsilon}_i,\hat{\mathcal{S}}_i)$ are invariant 
under complex conjugation.
By construction, we can assume also that 
$\mathcal{S}_{Y_{i\C}}$ and $\Omega_i$ are invariant 
under complex conjugation.

Let $h_i$ be the map : 
$\tilde{W}_{i\C} = W_{i\C} \underset{Y_{i\C}}{\times} \tilde{Y}_{i\C}
\to W_{i\C}$ in Lemma \ref{lemmasa} such that
$\Upsilon_i \circ h_i = \varpi_i \circ \tilde{\Upsilon}_i$,
and let $\hat{h}_i = h_i |_{\hat{W}_{i\C}}$.
Then we have the following commutative diagram of
stratified mappings endowed with the aforementioned stratifications:

\begin{equation}\label{complexdiagram}
\minCDarrowwidth 1pt
\begin{CD}
\hat{W}_{i\C} @> \hat{\Upsilon}_i>>  \tilde{Y}_{i\C} \\ 
@V\hat{h}_i VV @VV \varpi_i V\\
W_{i\C} @>\Upsilon_i>> Y_{i\C}  @>\breve{q}_{\C}>> K_{i\C}
\end{CD}
\end{equation}

\vspace{1mm}

Note that the maps $\hat{\Upsilon}_i$ and $\hat{h}_i$ are also
invariant under complex conjugation by construction,
and $\breve{q}_{\C}$ is the restriction of the projection
$\C P_{[m]} \times \C_{[a]} \to \C_{[a]}$
to $Y_{i\C}$.

We next consider the real part of the diagram above.
Let $\hat{W}_i$, $\tilde{Y}_i$ and $W_i$
be the sets of real points of $\hat{W}_{i\C}$, $\tilde{Y}_{i\C}$ and,
$W_{i\C}$, respectively.
Define $\hat{h}_{i\R} : \hat{W}_i \to W_i$ by 
$\hat{h}_{i\R} = \hat{h}_i |_{W_i}$,
$\hat{\Upsilon}_{i\R} : \hat{W}_i \to \tilde{Y}_i$
by $\hat{\Upsilon}_{i\R} = \hat{\Upsilon}_i |_{\hat{W}_i}$,
and $\Upsilon_{i\R} : W_i \to \R P_{[m]} \times \R P_{[a]}$
by $\Upsilon_{i\R} = \Upsilon_i |_{W_i}$.
Recall that
$W_i = W_{i\C} \cap \R P^{\ell} \times \R P_{[m]} \times \R P_{[a]}$
and $\Upsilon_{i\R} = \breve{\Pi} |_{W_i}$.
Set $\hat{Y}_i = \hat{\Upsilon}_{i\R} (\hat{W}_i)$
and $Y_i = \Upsilon_i (W_i)$. 
We further define
$\varpi_{i\R} : \hat{Y}_i \to Y_i$ by 
$\varpi_{i\R} = \varpi_i |_{\hat{Y}_i}$.
Let $K_i = \breve{q}_{\C} \circ \Upsilon_i (W_i)
= \breve{q} \circ \Upsilon_{i\R} (W_i)$.
Then we have the following commutative diagram:

\begin{equation}\label{realdiagram}
\minCDarrowwidth 1pt
\begin{CD}
\hat{W}_i @> \hat{\Upsilon}_{i\R}>>  \hat{Y}_i \\ 
@V\hat{h}_{i\R} VV @VV \varpi_{i\R} V\\
W_i @>\Upsilon_{i\R}>> Y_i  @>\breve{q}>> K_i
\end{CD}
\end{equation}

\vspace{2mm}

Applying the desingularisation theorem of Hironaka,
we can assume from the beginning that $W_i$ and $W_{i\C}$ 
are nonsingular.
Note that the desingularised $W_{i\C}$ and the real part $W_i$
coincide with the original ones in $\C^{\ell} \times \C^m \times \C^a$
and $\R^{\ell} \times \R^m \times \R^a$, respectively.
Let $\Sigma_i$ be the image of the critical points set of $\varpi_i$
in Lemma \ref{lemmasa}.
Using the arguments of the proofs of Theorem \ref{sabbaht2} and
Thom's Isotopy Lemmas, we can see that there are a complex
algebraic subset $H_{i\C}$ of $K_{i\C}$ with 
$\dim_{\C} H_{i\C} < \dim_{\C} K_{i\C}$ and a finite partition
of $K_{i\C} \setminus H_{i\C}$ into Nash open simplices $Q_j$'s
such that the stratified set $\Sigma_i$ and
the stratified mapping $\hat{\Upsilon}_i$ in diagram
(\ref{complexdiagram}) are topologically trivial over each $Q_j$.
By construction, there is a thin algebraic subset $\Theta_i$ of
$W_{i\C}$ such that 
$\hat{h}_i : \hat{W}_{i\C} \setminus \hat{h}_i^{-1}(\Theta )
\to W_{i\C} \setminus \Theta_i$ is an isomorphism.
In addition, since $W_{i\C}$ is nonsingular and 
${W_{i\C}}_{,t} = W_{i\C} \cap \breve{q}_{\C} \circ \Upsilon_i^{-1}(t)$
is complex 2-dimensional for each $t \in K_{i\C}$, we can assume that 
$\Theta_{i,t} = \breve{q}_{\C} \circ \Upsilon_i^{-1}(t) \cap \Theta_i$ 
is a finitely many points for any $t \in Q_j$ and $j$.
Then it follows from the argument of the proof of Theorem \ref{sabbaht2}
that the topological triviality of $\hat{\Upsilon}_i$ over $Q_j$
induces a topological one of $\Upsilon_i$ over $Q_j$.
By the proof of the theorem, we can see that the topological triviality 
of $\tilde{Y}_{i\C}$ in $\hat{\Upsilon}_i$ over $Q_j$ is an extension 
of the lifting of the topological triviality of $\Sigma_i$ over $Q_j$.
Therefore let us remark that the induced topological triviality
of $Y_{i\C}$ in $\Upsilon_i$ over $Q_j$ is an extension
of the topological triviality of $\Sigma_i$ over $Q_j$ for each $j$.
We make one more important remark that $\dim H_{i\C} \cap \R P_{[a]}
< \dim_{\C} K_{i\C}$.

Let us consider the real diagram (\ref{realdiagram}),
keeping the above observation in the complex case.
By definition we have $K = \bigcup_{i=1}^d K_i$.
Since $\Delta = J$,
there exists a finite partition $J = Q_1 \cup \cdots \cup Q_u \cup R$
which satisfies the following conditions:

(1) Each $Q_j$ is an open Nash simplex, and $R$ is a semialgebraic subset
of $J$ with $\dim R < \dim J$.

(2) For any $t \in J \setminus R$, $\dim F_t^{-1}(0) = 3$.

Assume that $\dim K < b$.
Then we take $R$ so that $K \subset R$.
Using the arguments of the proofs of Proposition \ref{reduction}
and some other finiteness theorem (e.g. Theorem II), we can assume
that each $F_{Q_j}^{-1}(0)$ is Nash trivial over $Q_j$.

In the following we assume that $\dim K = b$, namely, there is $K_i$ 
such that $\dim K_i = b$.
Let $\Sigma_{i\R} = \Sigma_i \cap \R P_{[m]} \times \R P_{[a]}$ and
$\Theta^{\prime}_i = \Theta_i \cap 
\R P^{\ell} \times \R P_{[m]} \times \R P_{[a]}$.
Then, by the construction of stratified sets and mappings
in diagram (\ref{complexdiagram}) and Observations 1-4, 
we can see that there exist a semialgebraic subset $R_i$ of $K_i$ with
$\dim R_i < b$ and a finite partition $K_i \setminus R_i = 
Q_{i,1} \cup \cdots \cup Q_{i,u}$ into Nash open simplices with 
$\dim Q_{i,j} = b$ so that over each $Q_{i,j}$,
there are a couple of finite $C^{\omega}$ Nash Whitney stratifications 
$\mathcal{S}_{i\R} = (\mathcal{S}_{W_i},\mathcal{S}_{Y_i})$ of $(W_i,Y_i)$, 
and finite $C^{\omega}$ Nash Whitney stratifications 
$\mathcal{S}_{\hat{Y}_i}$ of $\hat{Y}_i$ and $\mathcal{S}_{\hat{W}_i}$
of $\hat{W}_i$ respectively, which satisfy the following conditions:

(1) The map $\Upsilon_{i\R} : W_i \to Y_i$ with $\mathcal{S}_{i\R}$
is a stratified mapping.

(2) $\mathcal{S}_{W_i}$ is compatible with
$W_i \cap \R^{\ell} \times \R^m \times \R^a$ and
$W_i \cap (\bigcap_{\lambda \in \Xi} W_{\lambda})$'s
for any subset $\Xi \subset I \setminus \{ i \}$.

(3) $\mathcal{S}_{Y_i}$ is compatible with
$Y_{i\R} \cap \R^m \times \R^a$ and the images of all the above subsets
of $W_i$ by $\Upsilon_{i\R}$.

(4) The map $\hat{\Upsilon}_{i\R} :
\hat{W}_i \to \hat{Y}_i$ with $\hat{\mathcal{S}}_{i\R} = 
(\mathcal{S}_{\hat{W}_i},\mathcal{S}_{\hat{Y}_i})$ is a stratified mapping,
and $(\hat{\Upsilon}_{i\R},\hat{\mathcal{S}}_{i\R})$ is sans \'eclatement.
$\hat{\mathcal{S}}_{i\R}$ is compatible with $\mathcal{S}_{i\R}$.

(5) $\mathcal{S}_{\hat{Y}_i}$ is compatible with 
$\Gamma_{i\R} = \varpi_{i\R}^{-1}(\Sigma_{i\R} )$, 
and $\mathcal{S}_{Y_i}$ is compatible with $\Sigma_{i\R}$.

(6) $\varpi_{i\R} : (\hat{Y}_i,\mathcal{S}_{\hat{Y}_i})
\to (Y_i,\mathcal{S}_{Y_i})$ is a stratified mapping.

(7) The restricted stratified mapping 
$\varpi_{i\R} : \Gamma_{i\R} \to \Sigma_{i\R}$ 
endowed with the stratifications introduced by
$\mathcal{S}_{\hat{Y}_i}$ and $\mathcal{S}_{Y_i}$ is sans \'eclatement.

(8) $\breve{q} : \breve{q}^{-1}(Q_{i,j}) \cap Y_i \to Q_{i,j}$
is proper.
If $B_{\beta}$ is a stratum of $\mathcal{S}_{Y_i}$,
then the restriction $\breve{q}|_{B_{\beta}} :
B_{\beta} \to Q_{i,j}$ is a submersion.

\vspace{2mm}

\noindent Using a similar argument to the above with the semialgebraic 
versions of Thom's Isotopy Lemmas (cf. Lemmas \ref{2ndisotopy},
\ref{1stisotopy}) and Remark \ref{remark32}, 
we can see that the stratified set $\Sigma_{i\R}$ and
the stratified mapping $\hat{\Upsilon}_{i\R}$ in diagram
(\ref{realdiagram}) are semialgebraically trivial over each $Q_{i,j}$.
In addition, taking a finite subdivision of $Q_{i,j}$'s 
and taking $R_i$ bigger if necessary, we may assume that
$\Theta_i^{\prime}$ is a thin semialgebraic subset of 
$W_i$ over $Q_{i,j}$ such that for each $j$, 
$\hat{h}_{i\R} : \hat{W}_i \setminus \hat{h}_{i\R}^{-1}(\Theta_i^{\prime} )
\to W_i \setminus \Theta_i^{\prime}$ 
is a Nash isomorphism and
$\Theta_i^{\prime}$ is Nash trivial over $Q_{i,j}$, and 
$\Theta^{\prime}_{i,t} = \breve{q} \circ \Upsilon_{i\R}^{-1}(t) 
\cap \Theta_i^{\prime}$ is a finitely many points for any $t \in Q_{i,j}$.
Then it follows from a similar argument to the above that the 
semialgebraic triviality of $\hat{\Upsilon}_{i\R}$ over $Q_{i,j}$
induces a semialgebraic one of $\Upsilon_{i\R}$ over $Q_{i,j}$,
and the induced semialgebraic triviality of $Y_i$ in $\Upsilon_{i\R}$ 
over $Q_{i,j}$ is an extension of the semialgebraic triviality of 
$\Sigma_{i\R}$ over $Q_{i,j}$ for each $j$.

We show that after removing a thin semialgebraic subset $R$ of $K$
from $K$ and taking a finite subdivision of $K \setminus R$ into Nash open
simplices $Q_k$'s, $\breve{\Pi} : W \to \breve{\Pi}(W) \subset
\R P_{[m]} \times \R P_{[a]}$
is semialgebraically trivial over each $Q_k$.
By construction, $\Upsilon_{i\R} = \breve{\Pi} |_{W_i}$, $i \in I$.
For simplicity let us assume that $\dim K_i = b$, $1 \le i \le u$,
and $\dim K_i < b$, $u + 1 \le i \le d$.
As seen above, there exist a semialgebraic subset $R$ of $K$ 
with $\dim R < b$ and a finite partition of $K_i \setminus R$, 
$1 \le i \le u$, into Nash open simplices $Q_{i,j}$'s, $1 \le j \le s(i)$,
with $\dim Q_{i,j} = b$ for each $i$, $j$ such that
\begin{equation}\label{partition}
K \setminus R = Q_{1,1} \cup \cdots \cup Q_{1,s(1)} \cup \cdots \cup
Q_{u,1} \cup \cdots \cup Q_{u,s(u)} \ \ (u \le d),
\end{equation}
and each $\Upsilon_{i\R}$, $1 \le i \le u$, is semialgebraically trivial 
over $Q_{i,j}$ for $1 \le j \le s(i)$.
If $W_i \cap W_k \ne \emptyset$ for $1 \le i \ne k \le u$ over
$Q_{i,j(i)} \cap Q_{k,j(k)}$,
the semialgebraic trivialities of $\Upsilon_{i\R}$ and $\Upsilon_{k\R}$
may not coincide over $Q_{i,j(i)} \cap Q_{k,j(k)}$.
Therefore we have to modify those semialgebraic trivialities.
Taking a finite subdivision of $Q_{i,j}$'s 
and taking $R$ bigger if necessary, we may assume that
the stratifications of $\Upsilon_{i\R}$'s are compatible
with $W_i \cap W_j$, $\Upsilon_{i\R}(W_i \cap W_k)$, 
$\Upsilon_{k\R}(W_i \cap W_k)$ ($1 \le i <k \le u$) and 
$W_i \cap W_k \cap W_v$, $\Upsilon_i(W_i \cap W_k \cap W_v)$,
$\Upsilon_k(W_i \cap W_k \cap W_v)$,
$\Upsilon_v(W_i \cap W_k \cap W_v)$  ($1 \le i < k < v \le u$).
Many of the above intersections and their images may be empty.
But if not empty, we may assume also the following:

(1) $Q_{i,j(i)} = Q_{k.j(k)} = Q_{i,j(i)} \cap Q_{k,j(k)}$.
Over $Q_{i,j(i)}$, $W_i \cap W_k$ is $1$-dimensional, the image
by $\Upsilon_{i\R}$ (or $\Upsilon_{k\R}$) is $\le 1 $-dimensional, 
and they are semialgebraically trivial.

(2) $Q_{i,j(i)} = Q_{k,j(k)} = Q_{v,j(v)}
= Q_{i,j(i)} \cap Q_{k,j(k)} \cap Q_{v,j(v)}$. 
Over $Q_{i,j(i)}$, $W_i \cap W_k \cap W_v$ and the image by $\Upsilon_{i\R}$
(or $\Upsilon_{k\R}$, $\Upsilon_{v\R}$) is $0$-dimensional, 
and they are semialgebraically trivial.

\noindent Here we make one remark.

\begin{rem}\label{remark55} (1) Since $W^{i,k,v} = W_i \cap W_k \cap W_v$
is $0$-dimensional and semialgebraically trivial over $Q_{i,j(i)}$, 
the semialgebraic trivialities of  $\Upsilon_{i\R}$, $\Upsilon_{k\R}$
and $\Upsilon_{v\R}$ over $Q_{i,j(i)}$ are uniquely determined
on $W^{i,k,v}$.

(2) Let $W^{i,k} = W_i \cap W_k$.
We take the Zariski closure $\Lambda^{i,k}$ of $\Upsilon_{i\R}(W^{i,k})$ 
($= \Upsilon_{k\R}(W^{i,k}))$ in $\R P_{[m]} \times \R P_{[a]}$, 
and consider the intersection of $\breve{q}_{\C}^{-1}(K_{i\C})$ 
and the complexification $\Lambda^{i,k}_{\C}$ of $\Lambda^{i,k}$ 
in $\C P_{[m]} \times \C P_{[a]}$, and also the intersection of 
$\breve{q}_{\C}^{-1}(K_{k\C})$ and $\Lambda^{i,k}_{\C}$.
By Remark \ref{remark54}, we can take the intersections
in $\Sigma_i$ and $\Sigma_k$ in Lemma \ref{lemmasa}.
Therefore, taking a finite subdivision of $Q_{i,j(i)}$
and removing a thin semialgebraic subset if necessary,
we can assume that $\Upsilon_{i\R}(W^{i,k}) = \Upsilon_{k\R}(W^{i,k})$
is in $\Sigma_{i\R}$ and $\Sigma_{k\R}$ over $Q_{i,j(i)}$.

As seen above, given a semialgebraic triviality of $\Sigma_{i\R}$ 
over $Q_{i,j(i)}$, there is a semialgebraic triviality of 
$\hat{\Upsilon}_{i\R}$ over $Q_{i,j(i)}$ such that the triviality 
induces a semialgebraic one of $\Upsilon_{i\R}$ over $Q_{i,j(i)}$, 
and the semialgebraic triviality of $\Gamma_{i\R} \subset \hat{Y}_i$ 
in $\hat{\Upsilon}_{i\R}$ over $Q_{i,j(i)}$ is the lifting of the given 
triviality of $\Sigma_{i\R}$.
Note that this property holds for any $i$, $j(i)$.
The semialgebraic triviality of $\Upsilon_{i\R}$ over $Q_{i,j(i)}$
gives a semialgebraic one of $\Upsilon_{k\R} = \Upsilon_{i\R} : W^{i,k} 
\to \Upsilon_{k\R}(W^{i,k}) = \Upsilon_{i\R}(W^{i,k})$ over 
$Q_{k,k(i)} = Q_{i,j(i)}$.
Since $W^{i,k}$ is $\le 1$-dimensional over $Q_{k,j(k)}$,
taking a finite subdivisions of $Q_{k,j(k)}$
and removing a thin semialgebraic subset if necessary,
we can assume that $\Upsilon_{k\R} : W^{i,k} \to \Upsilon_{k\R}(W^{i,k})$
and $h_{k\R} : h_{k\R}^{-1}(W^{i,k}) \to W^{i.k}$ are Thom maps.
Therefore, by the above property and this Thom $(a_f)$-regularity,
we can construct the semialgebraic triviality of 
$\hat{\Upsilon}_{k\R}$ as an extension of the lifting of the semialgebraic
triviality of $\Upsilon_{k\R} : W^{i,k} \to \Upsilon_{k\R}(W^{i,k})$
over $Q_{k,j(k)}$.
Thus we can modify the semialgebraic triviality of $\Upsilon_{k\R}$
so that the restriction of the semialgebraic triviality to $W^{i,k}$
coincides with the restricted semialgebraic one of $\Upsilon_{i\R}$
over $Q_{k,j(k)}$.
\end{rem}

\noindent Now we can assume the following properties in (\ref{partition}):

(1) If $W_{i,j(i)} \cap W_{k,j(k)} \ne \emptyset$,
then $Q_{i,j(i)} = Q_{k,j(k)}$.
If $W_{i,j(i)} \cap W_{k,j(k)} = \emptyset$,
then $Q_{i,j(i)} \cap Q_{k,j(k)} = \emptyset$.
Here $W_{i,j(i)} = (\breve{q} \circ \Upsilon_{i\R})^{-1}(Q_{i.j(i)})$.

(2) $\Upsilon_{i\R}$ is semialgebraically trivial over $Q_{i,j(i)}$
for any $i$ and $j(i)$.

\noindent We first consider the semialgebraic triviality
of $\Upsilon_{1\R}$ over any $Q_{1,j(1)}$,
Let $k > 1$.
In the case where $Q_{i,j(i)} \cap Q_{k,j(k)} = \emptyset$ for any
$i < k$ and $j(i)$, we consider the semialgebraic triviality
of $\Upsilon_{k\R}$ over any $Q_{k,j(k)}$.
In the case where $Q_{i,j(i)} = Q_{k,j(k)}$ for $i < k$, by Remark 
\ref{remark55}, we can take the the semialgebraic triviality of 
$\Upsilon_{k\R}$ over $Q_{k,j(k)}$ so that the restriction of the 
semialgebraic triviality to $W^{i,k}$ coincides with the restricted 
semialgebraic one of $\Upsilon_{i\R}$ over $Q_{k,j(k)}$.
By this construction, we can see that $\breve{\Pi} : W \to \breve{\Pi}(W)$ 
is semialgebraically trivial over each $Q_{i,j}$.

In order to prove the proposition, that is, finiteness on
Blow-semialgebraic triviality for the main part $M(V)$,
we need to show finiteness on semialgebraic triviality of
$\Pi |_{V^{\prime}} : V^{\prime} \to MV$ after removing a thin semialgebraic 
subset from the original parameter algebraic set $J$.
As stated above, if $\dim K < b$, then finiteness holds on Nash triviality 
for $F_{J \setminus K}^{-1}(0)$.
Therefore it suffices to consider the case that $\dim K = b$.
In this case also, finiteness holds on Nash triviality 
for $F_{J \setminus K}^{-1}(0)$.
Let us restrict our finiteness problem over $K$.
As above we identify $V^{\prime}$ with the graph
of $\Pi |_{V^{\prime}}$, and keep the same notations $W_i$ and $W_{i\C}$.
Let us denote by $\mathcal{V}^{\prime}$ and $\mathcal{D}_j$, 
$1 \le j \le d$, the associated algebraic sets
of $V^{\prime}$ and $D_j$ in 
$\R P^{\ell} \times \R P_{[m]} \times \R P_{[a]}$, respectively.
Similarly, let $\mathcal{D}_{j\C}$, $1 \le j \le d$, be
the associated algebraic set of $D_j$ in 
$\C P^{\ell} \times \C P_{[m]} \times \C P_{[a]}$.
Let us apply our programme to $\breve{\Pi} |_{\mathcal{V}^{\prime}} 
: \mathcal{V}^{\prime} \to \breve{\Pi}(\mathcal{V}^{\prime})$.
Similarly to the above $W_i$ and $W_{i\C}$, applying the desingularisation
theorem of Hironaka, we can assume from the beginning that 
$\mathcal{V}^{\prime}$, $\mathcal{D}_j$ and $\mathcal{D}_{j\C}$, 
$1 \le j \le d$, are nonsingular.
(Some new exceptional divisors may appear by the desingularisation, 
but we use the same number $d$ for simplicity.)
As seen above, finiteness holds on semialgebraic triviality for
$\breve{\Pi} |_W : W \to \breve{\Pi}(W)$ after removing 
a semialgebraic subset $R$ from $K$ with $\dim R < b$.
Namely, there is a finite subdivision of $K \setminus R$ into Nash open
simplices $Q_i$'s such that $\breve{\Pi} |_W : W \to \breve{\Pi}(W)$
is semialgebraically trivial over each $Q_i$.
Since Assumption B is satisfied for each 
$\Pi_i |_{\mathcal{V}^{\prime} \cap (\mathcal{D}_1 \cup \cdots \cup 
\mathcal{D}_d)}$ in Process IV, finiteness holds also on semialgebraic 
triviality for $\breve{\Pi} |_{\mathcal{V}^{\prime}} 
: \mathcal{V}^{\prime} \to \breve{\Pi}(\mathcal{V}^{\prime})$
after removing a thin semialgebraic subset $R$ from $K$.
On the other hand, the stratifications of $W$ and $\breve{\Pi}(W)$ 
over each $Q_i$ are, by construction, compatible with 
$\R^{\ell} \times \R^m \times Q_i$ and $\R^m \times Q_i$, respectively.
Thanks to the compatibility of the above stratifications, finiteness holds 
also on semialgebraic triviality for $\Pi |_{V^{\prime}} : V^{\prime} \to MV$
after removing a thin semialgebraic subset $R$ from $K$.
Namely, finiteness holds on Blow-semialgebraic triviality
for $MV$ over $K \setminus R$.
Thus we have shown the following:

\vspace{3mm}

\noindent Assertion. There exists a finite partition
$J = Q_1 \cup \cdots \cup Q_u \cup R$
such that $R$ is a semialgebraic subset of $J$ with
$\dim R < \dim J$, and statements (1) and (2) in the proposition
hold for each $Q_i$.

\vspace{3mm}

If $\dim f_t^{-1}(0) \le 2$ or $\dim f_t^{-1}(0) \cap S(f_t) \le 0$
for any $t \in R$, then the proposition follows immediately 
from Lemma \ref{dimension2}.
But in the case where $\dim f_t^{-1}(0) = 3$ and
$\dim f_t^{-1}(0) \cap S(f_t) \ge 1$ for some $t \in R$,
we cannot apply the arguments above directly
to the family $\{ f_t : N \to \R^k \}_{t \in R}$,
since $R$ may not be an algebraic set in $\R^a$.
Then we take the Zariski closure $\overline{\overline{R}}$
of $R$ in $\R^a$.
Taking a finite subdivision of each
$Q_i \setminus \overline{\overline{R}}$ into Nash open
simplices $Q_{i_j}$'s if necessary, valid is the statement of the 
assertion replaced $Q_i$'s and $R$ with $Q_{i_j}$'s
and $\overline{\overline{R}}$, respectively.
Therefore in the assertion above, we may assume that
$R$ is a thin algebraic subset of $J$.
Since $\dim R < \dim J$, we can show the proposition
by induction on the dimension of the parameter algebraic set.

This completes the proof of the proposition.

\begin{rem}\label{keyremark}
In the proof of Proposition \ref{mainpart},
we have a partition of $J$
$$
J = (J \setminus K) \cup Q_1 \cup \cdots \cup Q_u \cup R
$$
which satisfies the following conditions:

(1) Finiteness holds on Nash triviality for $F_{J \setminus K}^{-1}(0)$.

(2) For each $j$, $MV_j = MF_{Q_j}^{-1}(0)$ 
admits a $\Pi_j$-Blow-semialgebraic trivialisation along $Q_j$.
Here $M_j = (q \circ \Pi)^{-1}(Q_j)$, and $\Pi_j$ is the restriction
of $\Pi$ to $M_j$.

(3) $\dim Q_j = \dim K$, $1 \le j \le u$, and $\dim R < \dim K$.

Given a semialgebraic subset $L$ of $MV$ which is $\le 1$-dimensional
over $J$.
Let $L_j = L \cap q^{-1}(Q_j)$ for $1 \le j \le u$.
Taking finite subdivisions of $Q_j$'s and removing a bigger $R$ if necessary,
we may assume the following:

(1) $L_j$ is $1$-dimensional and semialgebraically trivial
over $Q_j$ for $1 \le j \le s$.

(2) $L_j$ is $0$-dimensional and semialgebraically trivial
over $Q_j$ for $s + 1 \le j \le v$.

(3) $L_j$ is empty over $Q_j$ for $v + 1 \le j \le u$.

Consider the Zariski closure $\overline{\overline{L}}$ of $L$ in 
$\R^m \times \R^a$, the associated algebraic set $S$ of 
$\overline{\overline{L}}$ in $\R P_{[m]} \times \R P_{[a]}$ 
and the complexification $S_{\C} \subset \C P_{[m]} \times \C P_{[a]}$
of $S$.
Then each $S_{\C} \cap Y_{i\C}$, $1 \le i \le d$, is generically of 
relative dimension $\le 1$ over $K_{i\C}$.
Therefore, using a similar argument to the proof of the proposition above,
we can show that each $L_j$ is semialgebraically trivialised
by the induced semialgebraic triviality of $MV_j$ from the
$\Pi_j$-Blow-semialgebraic triviality,
after taking finite subdivisions of $Q_j$'s and removing 
a bigger $R$ if necessary.

Next let $L_j$ be a semialgebraic subset of $MV_j$ appearing dependently
on $MV_j$ which is semialgebraically trivial and $0$-dimensional or
$1$-dimensional over $Q_j$.
Considering the Zariski closure of $L_j$, the associated algebraic set
and its complexification also in this case,
we can show that $L_j$ is semialgebraically trivialised
by the induced semialgebraic triviality of $MV_j$ from the
$\Pi_j$-Blow-semialgebraic triviality,
after taking a finite subdivision of $Q_j$ and removing also
a thin semialgebraic subset $R_j$ of $Q_j$ if necessary.
Since we take the Zariski closure of $L_j$,
it may affect another $MV_k$, $k \ne j$.
But, when we consider $\Pi_k$-Blow-semialgebraic trivialities
for $k \ne j$, we need not take the affection into consideration.

The latter fact takes a very important role in
the proof of our main theorem given in the next section.
\end{rem}


\bigskip
\section{Proof of the main theorem.}
\label{proof}
\medskip

Given an algebraic set $V \subset \R^m$, we denote by $V^{(1)}$
the Zariski closure of $V \setminus MV$.
Then it is easy to see the following properties on $V^{(1)}$:

\vspace{3mm}

\noindent Property 1. $V \setminus MV = V^{(1)} \setminus MV$.

\vspace{3mm}

\noindent Property 2. $\dim (V^{(1)} \setminus MV) = \dim V^{(1)}
\ge \dim (V^{(1)} \cap MV)$.

\vspace{3mm}

Let $N$ be a nonsingular algebraic variety,
and let $Q \subset \R^m$ be a (connected) Nash manifold.
Let $F : N \times Q \to \R^k$ be a polynomial mapping,
and let $q : N \times Q \to Q$ be the canonical projection.
Set $V = F^{-1}(0)$.
Let $Q = Q_1 \cup \cdots \cup Q_w$ be a finite partition of $Q$
into Nash open simplices, and let
$V_j = V \cap q^{-1}(Q_j)$ and $V_j^{(1)} = V^{(1)} \cap q^{-1}(Q_j)$ 
for $1 \le j \le w$.
We assume that $V$ is semialgebraically trivial over $Q$.
Under this assumption, we have

\begin{lem}\label{samedim}
Let $W_j^{(1)}$ be the Zariski closure of $V_j \setminus MV_j$
in $N \times Q_j$ for $1 \le j \le w$.
Then we have 

\vspace{3mm}

\centerline{$V_j^{(1)} \supset W_j^{(1)}$ and $W_j^{(1)} \setminus MV_j
= V_j^{(1)} \setminus MV_j$ for $1 \le j \le w$.}

\vspace{3mm}

\noindent In addition, if $\dim Q_j = \dim Q$,
then $\dim V_j^{(1)} = \dim W_j^{(1)}$.
It follows that if $\dim Q_j = \dim Q$, then $\dim V_j > \dim V_j^{(1)}$.
\end{lem}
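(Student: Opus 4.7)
The plan is to exploit the semialgebraic trivialisation of $V$ over $Q$ to reduce all the assertions of the lemma to fibre-wise statements. The key reduction is that the main-part operation commutes with the restriction $V\mapsto V\cap q^{-1}(Q_j)$; once this is in hand, the lemma is bookkeeping combining Properties 1 and 2 with standard facts about Zariski closures.

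First I would fix a point $t_0\in Q$ and a $t$-level-preserving semialgebraic trivialisation $\sigma\colon V\to V_{t_0}\times Q$, under which $V_j$ corresponds to $V_{t_0}\times Q_j$. Because local dimension is a topological invariant and $Q$ (resp.\ $Q_j$) is a Nash manifold of pure dimension $\dim Q$ (resp.\ $\dim Q_j$), one has $\dim_{(x,t)} V=\dim_x V_{t_0}+\dim Q$ and $\dim_{(x,t)} V_j=\dim_x V_{t_0}+\dim Q_j$, so $\dim V=\dim V_{t_0}+\dim Q$ and $\dim V_j=\dim V_{t_0}+\dim Q_j$. In both cases $(x,t)$ lies in the main part if and only if $x\in MV_{t_0}$; this gives $MV_j=MV\cap q^{-1}(Q_j)$, and therefore
$$V_j\setminus MV_j=(V\setminus MV)\cap q^{-1}(Q_j).$$

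For the inclusion $V_j^{(1)}\supset W_j^{(1)}$, $V\setminus MV\subset V^{(1)}$ together with the identity above gives $V_j\setminus MV_j\subset V^{(1)}\cap q^{-1}(Q_j)=V_j^{(1)}$, and $V_j^{(1)}$ is the intersection of the algebraic set $V^{(1)}$ with $N\times Q_j$, hence Zariski closed in $N\times Q_j$; it therefore contains the Zariski closure $W_j^{(1)}$ of $V_j\setminus MV_j$. For the equality $W_j^{(1)}\setminus MV_j=V_j^{(1)}\setminus MV_j$, combining the displayed identity with Property 1 applied to $V$ yields
$$V_j^{(1)}\setminus MV_j=(V^{(1)}\setminus MV)\cap q^{-1}(Q_j)=(V\setminus MV)\cap q^{-1}(Q_j)=V_j\setminus MV_j\subset W_j^{(1)},$$
and since this set is disjoint from $MV_j$ it sits inside $W_j^{(1)}\setminus MV_j$; the reverse inclusion follows from $W_j^{(1)}\subset V_j^{(1)}$.

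For the dimension claim, assume $\dim Q_j=\dim Q$. The product decomposition $V_j\setminus MV_j\simeq (V_{t_0}\setminus MV_{t_0})\times Q_j$ gives $\dim(V_j\setminus MV_j)=\dim(V_{t_0}\setminus MV_{t_0})+\dim Q_j=\dim(V\setminus MV)=\dim V^{(1)}$, the last equality by Properties 1 and 2. Since the Zariski closure of a semialgebraic set preserves dimension, $\dim W_j^{(1)}=\dim V^{(1)}$, while $W_j^{(1)}\subset V_j^{(1)}\subset V^{(1)}$ squeezes $\dim V_j^{(1)}$ to the common value $\dim V^{(1)}=\dim W_j^{(1)}$. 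The final inequality $\dim V_j>\dim V_j^{(1)}$ is then immediate from $\dim V_j=\dim V>\dim V^{(1)}=\dim V_j^{(1)}$, the strict inequality holding because $V\setminus MV$ is contained in the union of the irreducible components of $V$ of dimension strictly less than $\dim V$. The only conceptually substantive step is the identification $MV_j=MV\cap q^{-1}(Q_j)$, which is precisely where the semialgebraic triviality hypothesis is essential; everything else is routine.
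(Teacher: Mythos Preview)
Your proof is correct and follows essentially the same line as the paper's: both arguments use the semialgebraic triviality of $V$ over $Q$ to compare fibrewise dimensions, obtain $\dim(V_j\setminus MV_j)=\dim(V\setminus MV)$ when $\dim Q_j=\dim Q$, and then squeeze $\dim V_j^{(1)}$ between $\dim W_j^{(1)}$ and $\dim V^{(1)}$. The only difference is one of explicitness: the paper asserts $V_j^{(1)}\supset W_j^{(1)}$ as ``obvious by definition'' and passes directly to fibre dimensions, whereas you isolate and justify the identity $MV_j = MV\cap q^{-1}(Q_j)$ (equivalently, $V_j\setminus MV_j=(V\setminus MV)\cap q^{-1}(Q_j)$) as the place where the triviality hypothesis is actually used; this is a genuine clarification rather than a different argument.
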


\begin{proof}
By definition, it is obvious that $V_j^{(1)} \supset W_j^{(1)}$
for $1 \le j \le w$.
By Property 1, 
$$
W_j^{(1)} \setminus MV_j = V_j \setminus MV_j \supset
V_j^{(1)} \setminus MV_j \supset W_j^{(1)} \setminus MV_j.
$$
It follows that $W_j^{(1)} \setminus MV_j = V_j^{(1)} \setminus MV_j$ 
for $1 \le j \le w$.

By assumption, $V \setminus MV$ is semialgebraically trivial over $Q$.
Let $V_{[t]} = V \cap q^{-1}(t)$ for $t \in Q$.
Then $\dim (V_{[t]} \setminus MV_{[t]})$ is constant over $Q$.
Put $\dim Q =  a$ and $\dim (V_{[t]} \setminus MV_{[t]}) = b$.
Since $V^{(1)}$ is the Zariski closure of $V \setminus MV$, we have
$$
\dim V^{(1)} = \dim (V \setminus MV) = a + b.
$$
Suppose that $\dim Q_j = \dim Q$. Then we have
$$
\dim W_j^{(1)} = \dim (V_j \setminus MV_j) = a + b. 
$$
On the other hand, $\dim (V_j \setminus MV_j) \le \dim V_j^{(1)}
\le \dim V^{(1)}$.
Thus $\dim V_j^{(1)} = a + b$.
It follows that $\dim V_j^{(1)} = \dim W_j^{(1)}$.
\end{proof}

Let us recall our main theorem.
Let $N$ be an affine nonsingular algebraic variety in $\R^m$,
and let $J$ be an algebraic set in $\R^a$.
Let $f_t : N \to \R^k$ ($t \in J$) be a polynomial mapping
such that $\dim f_t^{-1}(0) \le 3$ for $t \in J$.
Assume that $F$ is a polynomial mapping.
Then we have the following:

\vspace{3mm}

\noindent {\bf Main Theorem.} 
{\em There exists a finite partition $J = Q_1 \cup \cdots \cup Q_u$
which satisfies the following conditions}:

(1) {\em Each $Q_i$ is a Nash open simplex,
and $\dim f_t^{-1}(0)$ and $\dim f_t^{-1}(0) \cap S(f_t)$
are constant over $Q_i$.}

(2) {\em For each $i$ where $\dim f_t^{-1}(0) = 3$ and
$\dim f_t^{-1}(0) \cap S(f_t) \ge 1$ over $Q_i$, 
$F_{Q_i}^{-1}(0)$ admits a Blow-semialgebraic trivialisation 
consistent with a compatible filtration along $Q_i$.

In the case where $\dim f_t^{-1}(0) \le 2$ over $Q_i$
or $\dim f_t^{-1}(0) \cap S(f_t) \le 0$ over $Q_i$,
$(N \times Q_i,F_{Q_i}^{-1}(0))$ admits
a trivialisation listed in table (*).}

\begin{proof} Let us show our main theorem using the results
given in table (*) with Proposition \ref{mainpart}
instead of Main result.
Note that the finiteness theorem in Proposition \ref{mainpart}
holds in the algebraic parameter case, but the others in table (*)
hold in the semialgebraic parameter case.
We sometimes use the above results without referring in this proof.

By Proposition \ref{mainpart}, there is a finite partition
$J = Q_1 \cup \cdots \cup Q_r \cup Q_{r+1} \cup \cdots \cap Q_v$
which satisfies the following:

(1) Each $Q_i$ is a Nash open simplex, and $\dim f_t^{-1}(0)$
and $\dim f_t^{-1}(0) \cap S(f_t)$ are constant over $Q_i$.
In addition, each $F_{Q_i}^{-1}(0)$ is semialgebraically
trivial over $Q_i$.

(2) For $1 \le i \le r$, $\dim f_t^{-1}(0) = 3$ and
$\dim f_t^{-1}(0) \cap S(f_t) \ge 1$ over $Q_i$.
In this case, there is a Nash simultaneous resolution 
$\Pi_i : \mathcal{M}_i \to N \times Q_i$ of 
$F_{Q_i}^{-1}(0)$ in $N \times Q_i$ such that $MF_{Q_i}^{-1}(0)$ 
admits a $\Pi_i$-Blow-semialgebraic trivialisation along $Q_i$.

(3) For $r+1 \le i \le v$, $\dim f_t^{-1}(0) \le 2$ over $Q_i$
or $\dim f_t^{-1}(0) \cap S(f_t) \le 0$ over $Q_i$.
In this case, $(N \times Q_i,F_{Q_i}^{-1}(0))$ admits a trivialisation 
listed in table (*).

\noindent Therefore it suffices to show the first half part of
statement (2) in the main theorem.

Let $V_i = F_{Q_i}^{-1}(0)$, and let $V_i^{(1)}$ be the Zariski
closure of $V_i \setminus MV_i$ in $N \times Q_i$ for $1 \le i \le r$.
By Property 1, each $V_i^{(1)} \setminus  MV_i$ is semialgebraically 
trivial over $Q_i$.
Let $q : N \times J \to J$ be the canonical projection as above.

We first consider the case where $V_i^{(1)} \cap MV_i = \emptyset$.
Then there are 4 possibilities for the dimension of 
$V_i^{(1)} \setminus MV_i = V_i^{(1)}$.
Therefore we subdivide the first case into the following 4 cases:

\vspace{2mm}

(I;-1) \ $V_i = MV_i$ (then $V_i^{(1)} = \emptyset$).

(I;0) \ $\dim (V_i^{(1)} \setminus MV_i) = \dim Q_i$.

(I;1) \ $\dim (V_i^{(1)} \setminus MV_i) = \dim Q_i + 1$.

(I;2) \ $\dim (V_i^{(1)} \setminus MV_i) = \dim Q_i + 2$.

\vspace{2mm}

\noindent {\em Case} (I;-1): $V_i$ already admits a Blow-semialgebraic
trivialisation consistent with the trivial filtration
$\{ V_i \}$ along $Q_i$.

\vspace{2mm}

\noindent {\em Case} (I;0): There is a finite partition
$Q_i = Q_{i,1} \cup \cdots \cup Q_{i,w(i)}$ which satisfies the following:

(I;0:1) Each $Q_{i,j}$ is a Nash open simplex.

(I;0:2) For each $j$, $V_i^{(1)} \cap q^{-1}(Q_{i,j})$ is Nash diffeomorphic
to the direct product of some finite points in $V_i^{(1)} \cap q^{-1}(t)$,
$t \in Q_{i,j}$, and $Q_{i,j}$.

Let $V_{i,j} = V_{i,j}^{(0)} = V_i \cap q^{-1}(Q_{i,j})$ and
$V_{i,j}^{(1)} = V_i^{(1)} \cap q^{-1}(Q_{i,j})$ for $1 \le j \le w(i)$.
Then each $V_{i,j}$ admits a Blow-semialgebraic trivialisation
consistent with the canonical filtration
$\{ V_{i,j}^{(0)} \supset V_{i,j}^{(1)} \}$ along $Q_{i,j}$.

\vspace{2mm}

\noindent {\em Case} (I;1): There is a finite partition
$Q_i = Q_{i,1} \cup \cdots \cup Q_{i,w(i)}$ which satisfies the following:

(I;1:1) Each $Q_{i,j}$ is a Nash open simplex.

(I;1:2) For each $j$, there is a Nash simultaneous resolution 
$\beta_{i,j} : \mathcal{M}_{i,j} \to N \times Q_{i,j}$ of 
$V_i^{(1)} \cap q^{-1}(Q_{i,j})$ in $N \times Q_{i,j}$ such that
$(N \times Q_{i,j},V_i^{(1)} \cap q^{-1}(Q_{i,j}))$ admits 
a $\beta_{i,j}$-Blow-Nash trivialisation along $Q_{i,j}$.

Let $V_{i,j} = V_{i,j}^{(0)}$ and $V_{i.j}^{(1)}$ be the same as above.
If $V_{i,j}^{(1)} = MV_{i,j}^{(1)}$, then $V_{i,j}$ admits a 
Blow-semialgebraic trivialisation consistent with 
the canonical filtration
$\{ V_{i,j}^{(0)} \supset V_{i,j}^{(1)} \}$ along $Q_{i,j}$.

In the case where $V_{i,j}^{(1)} \ne MV_{i,j}^{(1)}$,
let $V_{i,j}^{(2)}$ be the Zariski closure of 
$V_{i,j}^{(1)} \setminus MV_{i,j}^{(1)}$.
Then there is a finite partition
$Q_{i,j} = Q_{i,j,1} \cup \cdots \cup Q_{i,j,a(i,j)} \cup \breve{Q}_{i,j}$ 
which satisfies the following:

(I;1:3) Each $Q_{i,j,k}$ is a Nash open simplex of $\dim Q_{i,j}$.

(I;1:4) $\breve{Q}_{i,j}$ is a semialgebraic subset of $Q_{i,j}$
of dimension less than $\dim Q_{i,j}$.

(I;1:5) For each $k$, $V_{i,j}^{(2)} \cap q^{-1}(Q_{i,j,k})$ 
is Nash diffeomorphic to the direct product of some finite points 
in $V_{i,j}^{(2)} \cap q^{-1}(t)$, $t \in Q_{i,j,k}$, and $Q_{i,j,k}$.

Let $V_{i,j,k} = V_{i,j,k}^{(0)} = V_{i,j} \cap q^{-1}(Q_{i,j,k})$,
$V_{i,j,k}^{(1)} = V_{i,j}^{(1)} \cap q^{-1}(Q_{i,j,k})$,
$V_{i,j,k}^{(2)} = V_{i,j}^{(2)} \cap q^{-1}(Q_{i,j,k})$, and let
$W_{i,j,k}^{(2)}$ be the Zariski closure of
$V_{i,j,k}^{(1)} \setminus MV_{i,j,k}^{(1)}$ in $N \times Q_{i,j,k}$
for $1 \le k \le a(i,j)$.
By Lemma \ref{samedim} we have
$\dim V_{i,j,k}^{(2)} = \dim W_{i,j,k}^{(2)} < \dim V_{i.j.k}^{(1)}$
for $1 \le k \le a(i,j)$,
and we see that the filtration
$\{ V_{i,j,k}^{(0)} \supset V_{i,j,k}^{(1)} \supset V_{i,j,k}^{(2)} \}$
is a compatible one of $V_{i,j,k}$ for each $k$.
Therefore each $V_{i,j,k}$ admits a Blow-semialgebraic trivialisation
consistent with a compatible filtration
$\{ V_{i,j,k}^{(0)} \supset V_{i,j,k}^{(1)} \supset V_{i,j,k}^{(2)} \}$
along $Q_{i,j,k}$.

In the latter case, finiteness holds on Blow-semialgebraic triviality 
consistent with a compatible filtration 
for a family of zero-sets over $Q_{i,j}$ except the thin semialgebraic 
subset $\breve{Q}_{i,j}$ of $Q_{i,j}$.
We take a finite partition $\breve{Q}_{i,j} =
\breve{Q}_{i,j,1} \cup \cdots \cup \breve{Q}_{i,j,b(i,j)}$
such that each $\breve{Q}_{i,j,k}$ is a Nash open simplex,
and denote by $\Pi_i : \breve{\mathcal{M}}_{i,j,k} \to 
N \times \breve{Q}_{i,j,k}$ the restriction of $\Pi_i$ to
$\Pi_i^{-1}(N \times \breve{Q}_{i,j,k})$ for $1 \le k \le b(i,j)$.
Then each $MF_{\breve{Q}_{i,j,k}}^{-1}(0)$ admits 
a $\Pi_i$-Blow-semialgebraic trivialisation along $\breve{Q}_{i,j,k}$.
Therefore we can reduce our finiteness problem to the case
of the lower dimensional parameter space in this case.

\begin{rem}\label{remark61}
In $MV_{i,j,k}^{(1)}$, the Nash trivialisation of $V_{i,j,k}^{(2)}$
may not coincide with the semialgebraic trivialisation
induced by the Blow-Nash trivialisation related to $\beta_{i,j}$.
But this is not a problem for our definition of Blow-semialgebraic
triviality consistent with a compatible filtration.
\end{rem}

\noindent {\em Case} (I;2): There is a finite partition
$Q_i = Q_{i,1} \cup \cdots \cup Q_{i,w(i)}$ which satisfies the following:

(I;2:1) Each $Q_{i,j}$ is a Nash open simplex.

(I;2:2) For each $j$, there is a Nash simultaneous resolution 
$\beta_{i,j} : \mathcal{M}_{i,j} \to N \times Q_{i,j}$ of 
$V_i^{(1)} \cap q^{-1}(Q_{i,j})$ in $N \times Q_{i,j}$ such that
$(N \times Q_{i,j},V_i^{(1)} \cap q^{-1}(Q_{i,j}))$ admits 
a $\beta_{i,j}$-Blow-semialgebraic trivialisation along $Q_{i,j}$.

\begin{rem}\label{remark62}
The above $\beta_{i,j}$-Blow-semialgebraic triviality of
$(N \times Q_{i,j},V_i^{(1)} \cap q^{-1}(Q_{i,j}))$ over $Q_{i,j}$
is given as an extension of the semialgebraic triviality of a stratified
mapping $\beta_{i,j}|_{\mathcal{D}_{i,j}} : \mathcal{D}_{i,j} \to
\beta_{i,j}(\mathcal{D}_{i,j})$ over $Q_{i,j}$,
where $\mathcal{D}_{i,j}$ is the exceptional set of $\beta_{i,j}$.
The latter semialgebraic triviality follows from the semialgebraic
version of Thom's 2nd Isotopy Lemma with certain weak assumptions
(cf. Remark \ref{remark32}).
Therefore, for a semialgebraic subset
$A \subset \beta_{i,j}(\mathcal{D}_{i,j})$,
taking finite substratifications $\mathcal{S}(\mathcal{D}_{i,j})$
and $\mathcal{S}(\beta_{i,j}(\mathcal{D}_{i,j}))$ if necessary,
there is a finite partition
$Q_{i,j} = Q_{i,j,1} \cup \cdots \cup Q_{i,j,a(i,j)}$
which satisfies the following:

(1) Each $Q_{i,j,k}$ is a Nash open simplex.

(2) The stratified mappings
$\beta_{i,j}|_{\mathcal{D}_{i,j} \cap (q \circ \beta_{i,j})^{-1}(Q_{i,j,k})} 
: \mathcal{D}_{i,j} \cap (q \circ \beta_{i,j})^{-1}(Q_{i,j,k}) \to
\beta_{i,j}(\mathcal{D}_{i,j}) \cap q^{-1}(Q_{i,j,k})$ and
$q : \beta_{i,j}(\mathcal{D}_{i,j}) \cap q^{-1}(Q_{i,j,k}) \to Q_{i,j,k}$
satisfy the conditions of the 2nd Isotopy Lemma, and the stratification
$\mathcal{S}(\beta_{i,j}(\mathcal{D}_{i,j}) \cap q^{-1}(Q_{i,j,k}))$
is compatible with $A \cap q^{-1}(Q_{i,j,k})$ for $1 \le k \le a(i,j).$

Therefore, each $(N \times Q_{i,j,k},V_i^{(1)} \cap q^{-1}(Q_{i,j,k}))$
admits a $\beta_{i,j}$-Blow-semialgebraic trivialisation along $Q_{i,j,k}$,
where this $\beta_{i,j}$ is the restriction of the original
$\beta_{i,j}$ over $Q_{i,j,k}$.
The induced semialgebraic triviality of $N \times Q_{i,j,k}$
trivialises also $A \cap q^{-1}(Q_{i,j,k})$.
\end{rem}

We keep the same notations $V_{i,j} = V_{i,j}^{(0)}$,
$V_{i,j}^{(1)}$, $V_{i,j}^{(2)}$, $V_{i,j,k} = V_{i,j,k}^{(0)}$,
$V_{i,j,k}^{(1)}$, $V_{i,j,k}^{(2)}$ and $W_{i,j,k}^{(2)}$ as above.

If $V_{i,j}^{(1)} = MV_{i,j}^{(1)}$, then $V_{i,j}$ admits
a Blow-semialgebraic trivialisation consistent 
with the canonical filtration $\{ V_{i,j}^{(0)} \supset V_{i,j}^{(1)} \}$
along $Q_{i,j}$.

If $V_{i,j}^{(1)} \setminus MV_{i,j}^{(1)} \ne \emptyset$,
then it is 0-dimensional or 1-dimensional over $Q_{i,j}$.
In the 0-dimensional case, there is a finite partition
$Q_{i,j} = Q_{i,j,1} \cup \cdots \cup Q_{i,j,a(i,j)} \cup \breve{Q}_{i,j}$ 
which satisfies the following:

(I;2:3) Each $Q_{i,j,k}$ is a Nash open simplex of $\dim Q_{i,j}$.

(I;2:4) $\breve{Q}_{i,j}$ is a semialgebraic subset of $Q_{i,j}$
of dimension less than $\dim Q_{i,j}$.

(I;2:5) Each $V_{i,j,k}^{(2)}$ is Nash diffeomorphic to the direct product 
of some finite points in $V_{i,j}^{(2)} \cap q^{-1}(t)$, 
$t \in Q_{i,j,k}$, and $Q_{i,j,k}$.

\noindent Similarly to Case (I;1), we see that each $V_{i,j,k}$ 
admits a Blow-semialgebraic trivialisation consistent
with a compatible filtration along $Q_{i,j,k}$.

In the case where $V_{i,j}^{(1)} \setminus MV_{i,j}^{(1)}$
is 1-dimensional over $Q_{i,j}$ and $L_{i,j}^{(1)} =
MV_{i,j}^{(1)} \cap \overline{V_{i,j}^{(1)} \setminus MV_{i,j}^{(1)}}$
is non-empty, $L_{i,j}^{(1)}$ is a semialgebraic subset of $MV_{i,j}^{(1)}$
which is 0-dimensional and semialgebraically trivial over $Q_{i,j}$.
Thanks to the desingularisation construction by Hironaka,
$L_{i,j}^{(1)}$ must be contained in $\beta_{i,j}(\mathcal{D}_{i,j})$.
Taking this fact and Remark \ref{remark62} into consideration,
we can take a finite partition 
$Q_{i,j} = Q_{i,j,1} \cup \cdots \cup Q_{i,j,a(i,j)} \cup \breve{Q}_{i,j}$ 
which satisfies the following:

(I;2:6) Each $Q_{i,j,k}$ is a Nash open simplex of $\dim Q_{i,j}$.

(I;2:7) $\breve{Q}_{i,j}$ is a semialgebraic subset of $Q_{i,j}$
of dimension less than $\dim Q_{i,j}$.

(I;2:8) For $1 \le k \le a(i,j)$, $(N \times Q_{i,j,k},V_{i,j,k}^{(1)})$
admits a $\beta_{i,j}$-Blow-semialgebraic trivialisation along $Q_{i,j,k}$.

(I;2:9) For $1 \le k \le a(i,j)$, there is a Nash simultaneous resolution
$\gamma_{i,j,k} : \mathcal{M}_{i,j,k} \to N \times Q_{i,j,k}$ of
$V_{i,j,k}^{(2)}$ in $N \times Q_{i,j,k}$ such that
$(N \times Q_{i,j,k},V_{i,j,k}^{(2)})$ admits
a $\gamma_{i,j,k}$-Blow-Nash trivialisation along $Q_{i,j,k}$.

(I;2:10) If $L_{i,j}^{(1)} \ne \emptyset$, then over 
$L_{i,j}^{(1)} \cap q^{-1}(Q_{i,j,k})$,
the semialgebraic trivialisation of $V_{i,j,k}^{(1)}$ induced by
the $\beta_{i,j}$-Blow-semialgebraic trivialisation in (I;2:8) 
coincides with the semialgebraic one of $V_{i,j,k}^{(2)}$ 
induced by the $\gamma_{i,j,k}$-Blow-Nash trivialisation in (I;2:9).

By construction and Lemma \ref{samedim} we have
$\dim V_{i,j,k}^{(0)} > \dim V_{i,j,k}^{(1)} > \dim V_{i,j,k}^{(2)}$
for $1 \le k \le a(i,j)$.
Therefore, in the case where $V_{i,j} \setminus MV_{i,j}$ is 1-dimensional
over $Q_{i,j}$ and $V_{i,j,k}^{(2)} = MV_{i,j,k}^{(2)}$,
we see that $V_{i,j,k}$ admits a Blow-semialgebraic trivialisation
consistent with a compatible filtration
$\{ V_{i,j,k}^{(0)} \supset V_{i,j,k}^{(1)} \supset V_{i,j,k}^{(2)} \}$
along $Q_{i,j,k}$.

It remains to consider the case where $V_{i,j} \setminus MV_{i,j}$
is 1-dimensional over $Q_{i,j}$ and $V_{i,j,k}^{(2)} \ne MV_{i,j,k}^{(2)}$.
Let $V_{i,j,k}^{(3)}$ denote the Zariski closure of
$V_{i,j,k}^{(2)} \setminus MV_{i,j,k}^{(2)}$ in $N \times Q_{i,j,k}$.
Then there is a finite partition $Q_{i,j,k} =
Q_{i,j,k,1} \cup \cdots \cup Q_{i,j,k,c(i,j,k)} \cup \breve{Q}_{i,j,k}$ 
which satisfies the following:

(I;2:11) Each $Q_{i,j,k,s}$ is a Nash open simplex of $\dim Q_{i,j,k}$.

(I;2:12) $\breve{Q}_{i,j,k}$ is a semialgebraic subset of $Q_{i,j,k}$
of dimension less than $\dim Q_{i,j,k}$.

(I;2:13) Each $V_{i,j,k,s}^{(3)}$ is Nash diffeomorphic to the direct product 
of some finite points in $V_{i,j,k}^{(3)} \cap q^{-1}(t)$, 
$t \in Q_{i,j,k,s}$, and $Q_{i,j,k,s}$.

Let

\vspace{3mm}

\centerline{$V_{i,j,k,s} = V_{i,j,k,s}^{(0)} 
= V_{i,j,k} \cap q^{-1}(Q_{i,j,k,s})$,
$V_{i,j,k,s}^{(1)} = V_{i,j,k}^{(1)} \cap q^{-1}(Q_{i,j,k,s})$,}

\vspace{3mm}

\centerline{$V_{i,j,k,s}^{(2)} = V_{i,j,k}^{(2)} \cap q^{-1}(Q_{i,j,k,s})$,
$V_{i,j,k,s}^{(3)} = V_{i,j,k}^{(3)} \cap q^{-1}(Q_{i,j,k,s})$.}

\vspace{3mm}

\noindent Then it follows from the construction that
$\dim V_{i,j,k,s}^{(0)} > \dim V_{i,j,k,s}^{(1)} >
\dim V_{i,j,k,s}^{(2)} > \dim V_{i,j,k,s}^{(3)}$
for $1 \le s \le c(i,j,k)$.
Therefore each $V_{i,j,k,s}$ admits a Blow-semialgebraic
trivialisation consistent with a compatible filtration
$\{ V_{i,j,k,s}^{(0)} \supset V_{i,j,k,s}^{(1)} \supset
V_{i,j,k,s}^{(2)} \supset V_{i,j,k,s}^{(3)} \}$ along $Q_{i,j,k,s}$.

\vspace{2mm}

We next consider the case where $V_i^{(1)} \cap MV_i \ne \emptyset$.
Then there are 3 possibilities for the dimension of 
$V_i^{(1)} \setminus MV_i$.
Therefore we subdivide the second case into the following 3 cases:

\vspace{2mm}

(II;0) \ $\dim (V_i^{(1)} \setminus MV_i) = \dim Q_i$.

(II;1) \ $\dim (V_i^{(1)} \setminus MV_i) = \dim Q_i + 1$.

(II;2) \ $\dim (V_i^{(1)} \setminus MV_i) = \dim Q_i + 2$.

\vspace{2mm}

Let us keep the same notations $V_{i,j} = V_{i,j}^{(0)}$, $V_{i,j}^{(1)}$,
$V_{i,j}^{(2)}$, $L_{i,j}^{(1)}$, $V_{i,j,k} = V_{i,j,k}^{(0)}$, 
$V_{i,j,k}^{(1)}$, $V_{i,j,k}^{(2)}$, $V_{i,j,k}^{(3)}$, 
$V_{i,j,k,s} = V_{i,j,k,s}^{(0)}$, $V_{i,j,k,s}^{(1)}$, 
$V_{i,j,k,s}^{(2)}$ and $V_{i,j,k,s}^{(3)}$
as above.

\vspace{2mm}

\noindent {\em Case} (II;0): There is a finite partition
$Q_i = Q_{i,1} \cup \cdots \cup Q_{i,w(i)} \cup \breve{Q}_i$ 
which satisfies the following:

(II;0:1) Each $Q_{i,j}$ is a Nash open simplex of $\dim Q_i$.

(II;0:2) $\breve{Q}_i$ is a semialgebraic subset of $Q_i$
of dimension less than $\dim Q_i$.

(II;0:3) For each $j$, $V_i^{(1)} \cap q^{-1}(Q_{i,j})$ 
is Nash diffeomorphic to the direct product of some finite points 
in $V_i^{(1)} \cap q^{-1}(t)$, $t \in Q_{i,j}$, and $Q_{i,j}$.

Therefore we see that each $V_{i,j}$ admits a Blow-semialgebraic 
trivialisation consistent with a compatible filtration
$\{ V_{i,j}^{(0)} \supset V_{i,j}^{(1)} \}$ along $Q_{i,j}$.

\vspace{2mm}

\noindent {\em Case} (II;1): Let $L_i^{(0)} =
MV_i \cap \overline{V_i \setminus MV_i}$.
Then it is non-empty, and is a semialgebraic subset of $MV_i$
which is 0-dimensional and semialgebraically trivial over $Q_i$.
Therefore it follows from Remark \ref{keyremark} that
there is a finite partition 
$Q_i = Q_{i,1} \cup \cdots \cup Q_{i,w(i)} \cup \breve{Q}_i$ 
which satisfies the following:

(II;1:1) Each $Q_{i,j}$ is a Nash open simplex of $\dim Q_i$.

(II;1:2) $\breve{Q}_i$ is a semialgebraic subset of $Q_i$
of dimension less than $\dim Q_i$.

(II;1:3) For each $j$, there is a Nash simultaneous resolution 
$\beta_{i,j} : \mathcal{M}_{i,j} \to N \times Q_{i,j}$ of 
$V_{i,j}^{(1)}$ in $N \times Q_{i,j}$ such that
$(N \times Q_{i,j},V_{i,j}^{(1)})$ admits 
a $\beta_{i,j}$-Blow-Nash trivialisation along $Q_{i,j}$.

(II;1:4) Over $L_i^{(0)} \cap q^{-1}(Q_{i,j})$,
the semialgebraic trivialisation of $V_i^{(0)}$ induced by
the $\Pi_i$-Blow-semialgebraic trivialisation of $MV_i^{(0)}$ 
coincides with the semialgebraic one of $V_{i,j}^{(1)}$ 
induced by the $\beta_{i,j}$-Blow-Nash trivialisation in (II;1:3).

In the case where $V_{i,j}^{(1)} = MV_{i,j}^{(1)}$, $V_{i,j}$
admits a Blow-semialgebraic trivialisation consistent with 
a compatible filtration
$\{ V_{i,j}^{(0)} \supset V_{i,j}^{(1)} \}$ along $Q_{i,j}$.

In the case where $V_{i,j}^{(1)} \ne MV_{i,j}^{(1)}$,
there is a finite partition
$Q_{i,j} = Q_{i,j,1} \cup \cdots \cup Q_{i,j,a(i,j)} \cup \breve{Q}_{i,j}$ 
which satisfies the following:

(II;1:5) Each $Q_{i,j,k}$ is a Nash open simplex of $\dim Q_{i,j}$.

(II;1:6) $\breve{Q}_{i,j}$ is a semialgebraic subset of $Q_{i,j}$
of dimension less than $\dim Q_{i,j}$.

(II;1:7) For each $k$, $V_{i,j}^{(2)} \cap q^{-1}(Q_{i,j,k})$ 
is Nash diffeomorphic to the direct product of some finite points 
in $V_{i,j}^{(2)} \cap q^{-1}(t)$, $t \in Q_{i,j,k}$, and $Q_{i,j,k}$.

Similarly to case (I;1), we can see that each $V_{i,j,k}$ admits 
a Blow-semialgebraic trivialisation consistent with a compatible filtration
$\{ V_{i,j,k}^{(0)} \supset V_{i,j,k}^{(1)} \supset V_{i,j,k}^{(2)} \}$
along $Q_{i,j,k}$.

\vspace{2mm}

\noindent {\em Case} (II;2): Let $L_i^{(0)}$ be the same as above.
Then it is a semialgebraic subset of $MV_i$
which is 0- or 1-dimensional and semialgebraically trivial over $Q_i$.
Similarly to case (II;1), there is a finite partition 
$Q_i = Q_{i,1} \cup \cdots \cup Q_{i,w(i)} \cup \breve{Q}_i$ 
which satisfies the same conditions as (II;1:1) - (II;1:4), 
replacing the $\beta_{i,j}$-Blow-Nash trivialisation 
in (II;1:3) with a $\beta_{i,j}$-Blow-semialgebraic trivialisation.

If $V_{i,j}^{(1)} = MV_{i,j}^{(1)}$, then $V_{i,j}$
admits a Blow-semialgebraic trivialisation consistent with 
a compatible filtration
$\{ V_{i,j}^{(0)} \supset V_{i,j}^{(1)} \}$ along $Q_{i,j}$.

If $V_{i,j}^{(1)} \setminus MV_{i,j}^{(1)} \ne \emptyset$,
then it is 0-dimensional or 1-dimensional over $Q_{i,j}$.
In the 0-dimensional case, there is a finite partition
$Q_{i,j} = Q_{i,j,1} \cup \cdots \cup Q_{i,j,a(i,j)} \cup \breve{Q}_{i,j}$ 
which satisfies the same conditions as (I;2:3) - (I;2:5).
Therefore each $V_{i,j,k}$ admits
a Blow-semialgebraic trivialisation consistent with a compatible filtration 
$\{ V_{i,j,k}^{(0)} \supset V_{i,j,k}^{(1)} \supset V_{i,j,k}^{(2)} \}$
along $Q_{i,j,k}$.

We last consider the case where $V_{i,j}^{(1)} \setminus MV_{i,j}^{(1)}$
is 1-dimensional over $Q_{i,j}$.
If $L_{i,j}^{(1)} \ne \emptyset$,
it is a semialgebraic subset of $MV_{i,j}^{(1)}$
which is 0-dimensional and semialgebraically trivial over $Q_{i,j}$.
Using a similar argument to Cases (I;2) based on Remarks \ref{remark62},
we can take a finite partition 
$Q_{i,j} = Q_{i,j,1} \cup \cdots \cup Q_{i,j,a(i,j)} \cup \breve{Q}_{i,j}$ 
which satisfies the same conditions as (I;2:6) - (I;2:10).
In addition, we can assume also the following:

(II;2:1) Over $L_i^{(0)} \cap q^{-1}(Q_{i,j,k})$,
the semialgebraic trivialisation of $V_{i,j.k}^{(0)}$ induced by
the $\Pi_i$-Blow-semialgebraic trivialisation of $MV_{i,j,k}^{(0)}$ 
coincides with the semialgebraic one of $V_{i,j,k}^{(1)}$ 
induced by the $\beta_{i,j}$-Blow-semialgebraic trivialisation
given in the condition corresponding to (II;1:3).

Similarly to Case (I;2), in the case where 
$V_{i,j} \setminus MV_{i,j}$ is 1-dimensional over $Q_{i,j}$ 
and $V_{i,j,k}^{(2)} = MV_{i,j,k}^{(2)}$,
we see that $V_{i,j,k}$ admits a Blow-semialgebraic trivialisation
consistent with a compatible filtration
$\{ V_{i,j,k}^{(0)} \supset V_{i,j,k}^{(1)} \supset V_{i,j,k}^{(2)} \}$
along $Q_{i,j,k}$.

In the case where $V_{i,j} \setminus MV_{i,j}$
is 1-dimensional over $Q_{i,j}$ and $V_{i,j,k}^{(2)} \ne MV_{i,j,k}^{(2)}$,
there is a finite partition $Q_{i,j,k} =
Q_{i,j,k,1} \cup \cdots \cup Q_{i,j,k,c(i,j,k)} \cup \breve{Q}_{i,j,k}$ 
which satisfies the same conditions as (I;2:11) - (I;2:13).
Therefore, similarly to Case (I;2), we can see 
that each $V_{i,j,k,s}$ admits a Blow-semialgebraic
trivialisation consistent with a compatible filtration
$\{ V_{i,j,k,s}^{(0)} \supset V_{i,j,k,s}^{(1)} \supset
V_{i,j,k,s}^{(2)} \supset V_{i,j,k,s}^{(3)} \}$ along $Q_{i,j,k,s}$.

\vspace{2mm}

In any case, removing a lower dimensional semialgebraic subset
from the original parameter space $Q_i$, $r + 1 \le i \le v$, 
if necessary, we proved that finiteness property holds on 
a Blow-semialgebraic triviality consistent with a compatible filtration 
for a family of 3-dimensional algebraic sets.
Let $T_1 = Q_{r+1} \cup \cdots \cup Q_v$.
The union of all the removed semialgebraic subsets is
a lower dimensional semialgebraic subset of $J$.
We take the Zariski closure $J_1$ of the union.
Taking a finite subdivision if necessary, the same finiteness property 
as above still holds outside $J_1$ in any case.
We apply Proposition \ref{mainpart} to the family 
$\{ f_t^{-1}(0) \}_{t \in J_1}$ similarly to the beginning of this proof.
Then we ignore the finiteness property over $J_1 \setminus T_1$.
Because we need not treat again the finiteness over $J \setminus T_1$.
In this way we can deduce the problem to the lower dimensional case.
Therefore we can finish the proof of the Main Theorem by induction 
on the dimension of the parameter space.
\end{proof}


\bigskip
\section{Finiteness for 3-dimensional Nash sets.}
\label{nashsets}
\medskip

In this section we give a finiteness theorem
on Blow-semialgebraic triviality consistent with a Nash compatible
filtration for a family of 3-dimensional Nash sets.
In order to show the finiteness theorem for a family of the main parts
of 3-dimensional algebraic sets, we considered the comlexification in \S 5.
Therefore the finiteness theorem corresponding to Process VII holds
only for a family of 3-dimensional algebraic sets defined over
a nonsingular algebraic variety.
As a result, we have to modify Process VIII.
We describe the proof using a different method.

We first recall the Artin-Mazur Theorem (M. Artin and B. Mazur 
\cite{artinmazur}, M. Coste, J.M. Ruiz and M. Shiota \cite{costeruizshiota}, 
M. Shiota \cite{shiota1}) for a family of Nash mappings.

\begin{thm}\label{artinmazur} (Artin-Mazur Theorem).
Let $P \subset \R^p$ and $T \subset \R^q$ be nonsingular algebraic 
varieties, and let $g : P \times T \to \R^k$ be a Nash mapping.
Then there exits a Nash mapping $h : P \times T \to \R^b$
with the following property:

\vspace{1mm}

Let $\tau : P \times T \times \R^k \times \R^b \to P \times T$
and $\pi : P \times T \times \R^k \times \R^b \to \R^k$ 
be the canonical projections, 
let $G = (g,h) : P \times T \to \R^k \times \R^b$ be the Nash mapping
defined by $G(x,t) = (g(x,t),h(x,t))$,
and let $X$ be the Zariski closure of graph $G$.
Then there is a union $L$ of some connected components of $X$ with
$\dim L = \dim X$ such that $\tau |_L : L \to P \times T$
is a $t$-level preserving Nash diffeomorphism
and $(\pi |_L) \circ (\tau |_L)^{-1} = g$.
\end{thm}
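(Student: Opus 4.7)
The plan is to apply the classical Artin-Mazur theorem directly to the Nash mapping $(x,t) \mapsto (x,t,g(x,t))$ from the nonsingular algebraic variety $P \times T$ into $\R^{p+q+k}$. That classical theorem (see \cite{artinmazur, costeruizshiota, shiota1}) produces Nash functions $h_1, \ldots, h_b$ on $P \times T$ such that the enlarged Nash mapping $\Psi : P \times T \to \R^p \times \R^q \times \R^k \times \R^b$ given by $\Psi(x,t) = (x, t, g(x,t), h(x,t))$ is a Nash embedding whose image is a union of connected components, of the same dimension, of an algebraic set. Setting $G = (g,h)$, this image coincides with $\mathrm{graph}\,G$, and its Zariski closure in the ambient Euclidean space is precisely the algebraic set $X$ in the statement; since $P \times T$ is itself algebraic, we may equally regard $X$ as a subset of $P \times T \times \R^k \times \R^b$.

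Let $L$ be the union of those connected components of $X$ that make up $\Psi(P \times T)$; by construction $\dim L = \dim X$. Since $\tau \circ \Psi = \mathrm{id}_{P \times T}$, the map $\tau|_L$ is a Nash diffeomorphism with inverse $\Psi$. The $t$-level preservation is immediate because $\Psi$ fixes the $T$-coordinate: for every $t_0 \in T$, $\Psi(P \times \{t_0\})$ is contained in $\R^p \times \{t_0\} \times \R^k \times \R^b$. Finally, for $(x,t) \in P \times T$ one computes $(\pi|_L) \circ (\tau|_L)^{-1}(x,t) = \pi(\Psi(x,t)) = g(x,t)$, so $(\pi|_L) \circ (\tau|_L)^{-1} = g$ as required.

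The main obstacle lies entirely in invoking the classical Artin-Mazur theorem in the correct form: the $h_i$ must be chosen so that $\Psi(P \times T)$ is open in its Zariski closure, which rests on the fact that every Nash function on a nonsingular algebraic variety is algebraic over the field of rational functions, together with a sheet-selection argument that separates $\Psi(P \times T)$ from the other branches of the algebraic closure. Once the classical theorem is granted, the parametric structure is preserved automatically, since the $T$-coordinates are included among the Nash functions fed into the construction; no additional argument is required to verify $t$-level preservation or to identify $X$ with the Zariski closure of $\mathrm{graph}\,G$.
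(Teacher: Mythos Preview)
The paper does not give its own proof of this theorem; it is stated with attribution to \cite{artinmazur, costeruizshiota, shiota1} and used as a black box in \S 7. Your proposal is therefore not being compared against a proof in the paper, but it is the correct and standard way to obtain this parametric statement: since $P\times T$ is itself a nonsingular affine algebraic variety, the non-parametric Artin--Mazur theorem applies directly to the Nash map $g$ (or equivalently to $(x,t)\mapsto(x,t,g(x,t))$), yielding $h$ with $\sigma(x,t)=(x,t,g(x,t),h(x,t))$ a Nash diffeomorphism onto a union $L$ of connected components of an algebraic set, and the $t$-level preservation of $\tau|_L$ is automatic from the shape of $\sigma$. One small point worth making explicit: the classical theorem produces an algebraic set $W$ containing $L$ as a union of components, and you then need that $L$ is still a union of connected components of the \emph{Zariski closure} $X$ of $\operatorname{graph}G$; this holds because $X$ is a union of irreducible components of $W$ and $L$ is open and closed in $W$, hence in $X$. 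With that caveat your argument is complete and matches what the cited references do.
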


\begin{rem}\label{remark71}
In the Artin-Mazur theorem above, $L$ is contained 
in the smooth part of the algebraic set $X$, denoted by $Reg(X)$.
\end{rem}

Let $N$ be a Nash manifold in $\R^m$, and let $J$ be 
a semialgebraic set in $\R^a$.
Let $f_t : N \to \R^k$ ($t \in J$) be a Nash mapping
such that $\dim f_t^{-1}(0) \le 3$ for $t \in J$.
Assume that $F$ is a Nash mapping.
Then we have

\vspace{3mm}

\noindent {\bf Theorem V.} {\em There exists a finite partition
$J = Q_1 \cup \cdots \cup Q_u$
which satisfies the following conditions}:

(1) {\em Each $Q_i$ is a Nash open simplex.}

(2) {\em For each $i$ where $\dim f_t^{-1}(0) \le 3$ and
$\dim f_t^{-1}(0) \cap S(f_t) \ge 1$ over $Q_i$,
there are a nonsingular algebraic variety $\hat{N}_i$
and a $t$-level preserving Nash embedding
$\alpha_i : N \times Q_i \to \hat{N}_i \times Q_i$  
such that $\alpha_i(F_{Q_i}^{-1}(0))$ admits a Blow-semialgebraic 
trivialisation consistent with a Nash compatible filtration along $Q_i$.

In the case where $\dim f_t^{-1}(0) \le 2$ over $Q_i$
or $\dim f_t^{-1}(0) \cap S(f_t) \le 0$
over $Q_i$,
$(N \times Q_i,F_{Q_i}^{-1}(0))$ admits
a trivialisation listed in table (*).}

\begin{proof}
Note that Lemma \ref{dimension2} holds also for a family of Nash mappings $F$.
By Process I in \S 3, it suffices to consider the case where 
$\dim f_t^{-1}(0) = 3$ and $\dim f_t^{-1}(0) \cap S(f_t) \ge 1$ 
for any $t \in J$.
Under this assumption, we have the following lemma:

\begin{lem}\label{lemma71}
There exists a finite partition
$J = Q_1 \cup \cdots \cup Q_u$
which satisfies the following conditions:

(1) Each $Q_i$ is a Nash open simplex.

(2) For each $i$, there are a nonsingular algebraic variety $\hat{N}_i$,
a $t$-level preserving Nash embedding $\alpha_i : N \times Q_i \to
\hat{N}_i \times Q_i$ and a Nash simultaneous resolution 
$\hat{\Pi}_i : \hat{\mathcal{M}}_i \to \hat{N}_i \times Q_i$ 
of $\alpha_i(F_{Q_i}^{-1}(0))$ in $\hat{N}_i \times Q_i$ such that
$\alpha_i(MF_{Q_i}^{-1}(0))$ admits a $\hat{\Pi}_i$-Blow-semialgebraic 
trivialisation along $Q_i$.
\end{lem}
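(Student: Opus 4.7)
The plan is to reduce the Nash situation of Lemma~\ref{lemma71} to the algebraic situation handled by Proposition~\ref{mainpart} via the Artin--Mazur Theorem~\ref{artinmazur}. First, by the Semialgebraic Triangulation Theorem~\ref{lojasiewicz} together with Remark~\ref{remark11}, subdivide $J$ into a finite collection of Nash open simplices $\{Q_\alpha\}$, and identify each $Q_\alpha$ with the nonsingular algebraic variety $\R^{p_\alpha}$ through its defining Nash diffeomorphism. By Shiota's theorem (as invoked in Process~VIII of Section~\ref{programme}), $N$ is Nash diffeomorphic to a nonsingular affine algebraic variety, and we assume $N$ is itself algebraic from the outset.

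Second, fix a simplex $Q = Q_\alpha$ and apply Theorem~\ref{artinmazur} to the Nash mapping $g = F_Q : N \times Q \to \R^k$ with $P = N$, $T = Q$. This produces a Nash mapping $h_\alpha : N \times Q \to \R^{b_\alpha}$ and an algebraic set $L_\alpha$ (a union of full-dimensional components of the Zariski closure of $\mathrm{graph}(F_Q, h_\alpha)$) in $N \times Q \times \R^k \times \R^{b_\alpha}$ for which the projection $\tau|_{L_\alpha}$ is a $t$-level preserving Nash diffeomorphism onto $N \times Q$ with $(\pi|_{L_\alpha}) \circ (\tau|_{L_\alpha})^{-1} = F_Q$. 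Set $\hat{N}_\alpha := N \times \R^k \times \R^{b_\alpha}$, again a nonsingular algebraic variety, and define the $t$-level preserving Nash embedding
\begin{equation*}
\alpha_\alpha : N \times Q \to \hat{N}_\alpha \times Q, \qquad
\alpha_\alpha(x, t) = \bigl((x, F_Q(x, t), h_\alpha(x, t)), t\bigr),
\end{equation*}
whose image is precisely $L_\alpha$ (up to reordering of factors). Writing $y$ for the $\R^k$-coordinate on $\hat{N}_\alpha$, we have $\alpha_\alpha(F_Q^{-1}(0)) = L_\alpha \cap \{y = 0\}$, which is an algebraic subset of $\hat{N}_\alpha \times Q$ and hence may be presented as the zero set of a polynomial mapping $F_\alpha^{\ast} : \hat{N}_\alpha \times Q \to \R^{k^{\ast}}$.

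Third, apply Proposition~\ref{mainpart} to the polynomial family $F_\alpha^{\ast}$ over $\hat{N}_\alpha \times Q$ with algebraic parameter set $Q = \R^{p_\alpha}$; since $\alpha_\alpha$ is a $t$-level preserving Nash diffeomorphism onto its image, one has $\dim (F_\alpha^{\ast}|_{\hat{N}_\alpha \times \{t\}})^{-1}(0) = \dim f_t^{-1}(0) \le 3$. Proposition~\ref{mainpart} yields a finite partition $Q = Q_{\alpha,1} \cup \cdots \cup Q_{\alpha, u_\alpha}$ into Nash open simplices together with, on each $Q_{\alpha, j}$, a Nash simultaneous resolution $\hat{\Pi}_{\alpha, j} : \hat{\mathcal{M}}_{\alpha, j} \to \hat{N}_\alpha \times Q_{\alpha, j}$ of $\alpha_\alpha(F_{Q_{\alpha, j}}^{-1}(0))$ admitting a $\hat{\Pi}_{\alpha, j}$-Blow-semialgebraic trivialisation of the main part $M(\alpha_\alpha(F_{Q_{\alpha, j}}^{-1}(0))) = \alpha_\alpha(MF_{Q_{\alpha, j}}^{-1}(0))$. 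In the branches of Proposition~\ref{mainpart}'s case-analysis where the algebraic dimensions are smaller, the trivialisations from table~(*) are stronger and \emph{a fortiori} yield the desired Blow-semialgebraic one. The aggregated partition $J = \bigcup_{\alpha, j} Q_{\alpha, j}$, equipped with the data $(\hat{N}_\alpha, \alpha_\alpha|_{N \times Q_{\alpha, j}}, \hat{\Pi}_{\alpha, j})$, furnishes the lemma.

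The principal difficulty lies in securing the \emph{algebraicity} of the image $\alpha_\alpha(F_Q^{-1}(0))$ inside $\hat{N}_\alpha \times Q$: it is this algebraicity that legitimises invoking Proposition~\ref{mainpart} over an algebraic parameter set. Artin--Mazur is tailored to this, but only at the cost of enlarging $N$ to the ambient $N \times \R^k \times \R^{b_\alpha}$ so that the graph-type algebraic set $L_\alpha$ can live there. A secondary technicality is that the singular structure of $F_\alpha^{\ast}$ on the algebraic side need not mirror $S(f_t)$ on the Nash side, but Proposition~\ref{mainpart}'s conclusion is uniform across the branches of its case-analysis, so a Blow-semialgebraic trivialisation of the main part is produced in every case.
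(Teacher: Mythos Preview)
Your overall strategy---triangulate $J$, algebraise $N$ and each $Q_\alpha$, then use Artin--Mazur to push $F_{Q_\alpha}^{-1}(0)$ to an algebraic set so that Proposition~\ref{mainpart} applies---matches the paper's. However, there is a genuine gap at the key step.

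You call $L_\alpha$ ``an algebraic set''. It is not, in general. Theorem~\ref{artinmazur} produces $L_\alpha$ only as a union of \emph{connected components} of the real algebraic set $X_\alpha$ (the Zariski closure of the graph); a union of connected components of a real algebraic variety is semialgebraic and Nash, but need not be Zariski closed (e.g.\ the oval of an irreducible real cubic). Consequently your set $\alpha_\alpha(F_Q^{-1}(0)) = L_\alpha \cap \{y=0\}$ is only a union of connected components of the algebraic set $X_\alpha \cap \{y=0\}$, and you are not entitled to write it as the zero set of a polynomial mapping $F_\alpha^{\ast}$. Without that, you cannot invoke Proposition~\ref{mainpart}, whose hypotheses require a polynomial family over an algebraic parameter set.

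The paper repairs exactly this point: it sets $W_i = L_i \cap \pi^{-1}(0)$ and then passes to the \emph{Zariski closure} $Z_i$ of $W_i$ in $\hat N_i \times Q_i$, which is genuinely algebraic and contained in $X_i \cap \pi^{-1}(0)$. One checks that $W_i$ is a union of connected components of $Z_i$. Proposition~\ref{mainpart} (more precisely, its proof) is then applied to $Z_i$, yielding a partition $Q_i = \bigcup_j Q_{i,j}$ and resolutions $\Pi_{i,j}$ with a $\Pi_{i,j}$-Blow-semialgebraic trivialisation of $MZ_{i,j}$. Because $W_{i,j}$ is open and closed in $Z_{i,j}$ one has $MW_{i,j} = MZ_{i,j}\cap W_{i,j}$, so the trivialisation of $MZ_{i,j}$ restricts to the desired trivialisation of $MW_{i,j} = \tau_i^{-1}(MF_{Q_{i,j}}^{-1}(0))$. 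Your argument becomes correct once you insert this Zariski-closure step and the restriction-to-components observation.
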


\begin{proof}
By Theorem \ref{lojasiewicz}, there is a partition of $J$ into
finite Nash manifolds $Q_i$, $i = 1, \cdots , e$, such that each $Q_i$
is Nash diffeomorphic to an open simplex in some Euclidean space.
As stated in Process VIII, every Nash manifold is Nash diffeomorphic 
to a nonsingular (affine) algebraic variety.
Therefore we may assume that $N$ and $Q_i$, $1 \le i \le e$, 
are nonsingular algebraic varieties.
Let $N \subset \R^m$.

We apply the Artin-Mazur Theorem to each Nash mapping
$F_{Q_i} : N \times Q_i \to \R^k$, $i = 1, \cdots , e$.
Then there are an algebraic variety 
$X_i \subset N \times \R^k \times \R^{b_i} \times Q_i$,
a union $L_i$ of some connected components of $X_i$
with $\dim L_i = \dim X_i$ and $L_i \subset Reg(X_i)$,
and a $t$-level preserving Nash diffeomorphism 
$\tau_i : L_i \to N \times Q_i$ such that 
$(\pi |_{L_i}) \circ \tau_i^{-1} = F_{Q_i}$, where
$\pi : N \times \R^k \times \R^{b_i} \times Q_i \to \R^k$
is the canonical projection.
Let $\hat{N}_i = N \times \R^k \times \R^{b_i}$, and
let $Z_i$ be the Zariski closure of $W_i = L_i \cap \pi^{-1}(0)$ 
in $\hat{N}_i \times Q_i$.
Then $Z_i$ is an algebraic subset of $X_i \cap \pi^{-1}(0)$.
Note that $W_i$ is the union of connected components
of $Z_i$ contained in $L_i$.
Next let $\beta_i : \hat{N_i} \times Q_i \to Q_i$ be the
canonical projection.
By the proof of Proposition \ref{mainpart}, we see that there is 
a finite partition of $Q_i = Q_{i,1} \cup \cdots \cup Q_{i,u(i)}$ 
which satisfies the following:

(1) Each $Q_{i,j}$ is a Nash open simplex.

(2) For each $j$, there is a Nash simultaneous resolution
$\Pi_{i,j} : \mathcal{M}_{i,j} \to \hat{N_i} \times Q_{i,j}$
of $Z_{i,j} = Z_i \cap \beta_i^{-1}(Q_{i,j})$
in $\hat{N_i} \times Q_{i,j}$ such that 
$MZ_{i,j}$ admits a $\Pi_{i,j}$-Blow semialgebraic trivialisation
along $Q_{i,j}$.

For $1 \le j \le u(i)$, let $W_{i,j} = W_i \cap \beta_i^{-1}(Q_{i.j})$.
Since $W_i = Z_i \cap L_i$ and
$MW_{i,j} = MZ_{i,j} \cap W_{i,j}$ for $1 \le j \le u(i)$,
each $\Pi_{i,j}$-Blow-semialgebraic trivialisation of $MZ_{i,j}$
induces a Blow-semialgebraic trivialisation 
of $MW_{i,j}$ along $Q_{i,j}$.
Note that $MW_{i,j} = \tau_i^{-1}(MF_{Q_{i,j}}^{-1}(0))$.
Therefore the statement of the lemma follows.
\end{proof}

Thanks to Hironaka \cite{hironaka1, hironaka3} and Bierstone-Milman
\cite{bierstonemilman1, bierstonemilman2, bierstonemilman3},
the desingularisation theorem holds also in the Nash category.
In addition, a Nash compatible filtration of a  Nash set is
preserved by a Nash diffeomorphism.
Therefore Theorem IV follows from a similar argument 
to the proof of Main Theorem with Lemma \ref{lemma71}.
\end{proof}


\bigskip
\section{Finiteness on semialgebraic types of polynomial maps over $\R^2$.}
\label{plynomialmaps}
\medskip

Let $\K = \R$ or $\C$.
We denote by $P^d_{\K}(n,p)$ the set of polynomial mappings
from $\K^n$ to $\K^p$ of degree $\le d$.
We say that two polynomial functions $f$, $g : \K^n \to \K$
are {\em topologically equivalent}, if there is a homeomorphism
$\sigma : \K^n \to \K^n$ such that $f = g \circ \sigma$.
In the real case, if we can take the $\sigma$ as a semialgebraic
homeomorphism, we say that $f$ and $g$ are {\em semialgebraically
equivalent}.
We say that two polynomial mappings $f$, $g : \K^n \to \K^p$,
$p \ge 2$, are {\em topologically equivalent}, if there are homeomorphisms
$\sigma : \K^n \to \K^n$ and $\tau : \K^p \to \K^p$
such that $\tau \circ f = g \circ \sigma$.
Similarly to the function case, we say that real polynomial
mappings $f$ and $g$ are {\em semialgebraically equivalent} 
in the case where $\sigma$ and $\tau$ are semialgebraic homeomorphisms.

We review the results on finiteness of topological or semialgebraic
types of polynomial functions or mappings.
Concerning the topological types of polynomial functions,
a finiteness theorem is shown by T. Fukuda.
He proves in \cite{fukuda1} that the number of topological types
appearing in $P^d_{\K}(n,1)$ for $\K = \R$ or $\C$ is finite.
The real result is strengthened by R. Benedetti and M. Shiota
\cite{benedettishiota}.
They give a finiteness theorem on semialgebraic equivalence.

On the other hand, some finiteness theorems on topological equivalence 
are also known for polynomial mappings of two variables.
K. Aoki \cite{aoki}, for instance, proves a finiteness theorem
for plane-to-plane mappings in $P^d_{\K}(2,2)$ for $\K = \R$ or $\C$.
Aoki's result in the complex case is generalised by C. Sabbah
\cite{sabbah1} to a finiteness theorem for polynomial mappings
in $P^d_{\C}(2,p)$.
But finiteness does not hold in general for polynomial mappings
of more than $2$ variables.
As mentioned in \S 4, I. Nakai \cite{nakai} constructs
a polynomial family of polynomial mappings of at least $3$ variables
in which topological moduli appear.

In this section we make a remark on finiteness
for real polynomial mappings of two variables.
Using the arguments discussed in \S 5, we can show the following
result more easily than Proposition \ref{mainpart}.

\vspace{3mm}

\noindent {\bf Theorem VI.} {\em The number of semialgebraic types
appearing in $P^d_{\R}(2,p)$ is finite.}
\vspace{3mm}

\bigskip

\end{document}